\documentclass[a4paper,11pt,reqno]{amsart}

%%%%%%%%%%%%%%%%%%%%%%%%%%%%%%
%%%%%%%%%% Packages %%%%%%%%%%
%%%%%%%%%%%%%%%%%%%%%%%%%%%%%%

\usepackage{
amsfonts,
amsmath,
amsopn,
amssymb,
amsthm,
bbm,
bbold,
dsfont,
enumitem,
graphicx,
mathrsfs,
mathtools,
%refcheck,
%showkeys,
soul,
subfig,
verbatim,
xcolor,
xspace
}

\usepackage{geometry}
\geometry{hmargin={3cm,3cm}}
\geometry{vmargin={3cm,3cm}}

\usepackage[hidelinks]{hyperref}
\usepackage[utf8]{inputenc}

\mathtoolsset{showonlyrefs}

%%%%%%%%%%%%%%%%%%%%%%%%%%%%%%%%%
%%%%%%%%%% Definitions %%%%%%%%%%
%%%%%%%%%%%%%%%%%%%%%%%%%%%%%%%%%

\newtheorem{theorem}{Theorem}[section]
\newtheorem{lemma}[theorem]{Lemma}
\newtheorem{proposition}[theorem]{Proposition}
\newtheorem{corollary}[theorem]{Corollary}

\theoremstyle{remark}
\newtheorem{remark}[theorem]{Remark}
\newtheorem*{remark*}{Remark}

\theoremstyle{definition}
\newtheorem{definition}[theorem]{Definition}

%%%%%%%%%%%%%%%%%%%%%%%%%%%%%%
%%%%%%%%%% Commands %%%%%%%%%%
%%%%%%%%%%%%%%%%%%%%%%%%%%%%%%

%

%

\newcommand{\R}{\mathbb{R}}
\newcommand{\Rd}{\mathbb{R}^2}

\newcommand{\C}{\mathbb{C}}

\newcommand{\N}{\mathbb{N}}

\newcommand{\Eps}{\mathcal{E}}
\newcommand{\G}{\mathcal{G}}

\newcommand{\F}{\mathcal{F}}

\renewcommand{\leq}{\leqslant}
\renewcommand{\geq}{\geqslant}

\newcommand{\la}{\lambda}

\newcommand{\al}{\alpha}

\newcommand{\ga}{\gamma}
\newcommand{\ep}{\varepsilon}

\newcommand{\x}{\mathbf{x}}

\renewcommand{\k}{\mathbf{k}}
\newcommand{\z}{\mathbf{0}}

\newcommand{\ds}{\,ds}
\newcommand{\dt}{\,dt}
\newcommand{\dx}{\,d\x}

\newcommand{\lap}{\Delta}
\newcommand{\na}{\nabla}

\newcommand{\f}[2]{\frac{#1}{#2}}

\newcommand{\deb}{\rightharpoonup}

%\newcommand{\rosso}[1]{\color{red}{#1}}
%\newcommand{\blu}[1]{\color{blue}{#1}}

%%%%%%%%%%%%%%%%%%%%%%%%%%%%%%%%%%%%%%%%%%%%%%%%%%%%%%%%%%%%%%%%%%%%%%%%%%%%%%%%
%%%%%%%%%%%%%%%%%%%%%%%%%%%%%%%%%%%%%%%%%%%%%%%%%%%%%%%%%%%%%%%%%%%%%%%%%%%%%%%%
%%%%%%%%%%%%%%%%%%%%%%%%%%%%%%%%%%%%%%%%%%%%%%%%%%%%%%%%%%%%%%%%%%%%%%%%%%%%%%%%
%%%%%%%%%%%%%%%%%%%%%%%%%%%%%%%%%%%%%%%%%%%%%%%%%%%%%%%%%%%%%%%%%%%%%%%%%%%%%%%%
%%%%%%%%%%%%%%%%%%%%%%%%%%%%%%%%%%%%%%%%%%%%%%%%%%%%%%%%%%%%%%%%%%%%%%%%%%%%%%%%
%%%%%%%%%%%%%%%%%%%%%%%%%%%%%%%%%%%%%%%%%%%%%%%%%%%%%%%%%%%%%%%%%%%%%%%%%%%%%%%%
%%%%%%%%%%%%%%%%%%%%%%%%%%%%%%%%%%%%%%%%%%%%%%%%%%%%%%%%%%%%%%%%%%%%%%%%%%%%%%%%
%%%%%%%%%%%%%%%%%%%%%%%%%%%%%%%%%%%%%%%%%%%%%%%%%%%%%%%%%%%%%%%%%%%%%%%%%%%%%%%%
%%%%%%%%%%%%%%%%%%%%%%%%%%%%%%%%%%%%%%%%%%%%%%%%%%%%%%%%%%%%%%%%%%%%%%%%%%%%%%%%
%%%%%%%%%%%%%%%%%%%%%%%%%%%%%%%%%%%%%%%%%%%%%%%%%%%%%%%%%%%%%%%%%%%%%%%%%%%%%%%%
%%%%%%%%%%%%%%%%%%%%%%%%%%%%%%%%%%%%%%%%%%%%%%%%%%%%%%%%%%%%%%%%%%%%%%%%%%%%%%%%
%%%%%%%%%%%%%%%%%%%%%%%%%%%%%%%%%%%%%%%%%%%%%%%%%%%%%%%%%%%%%%%%%%%%%%%%%%%%%%%%

%%%%%%%%%%%%%%%%%%%%%%%%%%%%%%%%%%
%%%%%%%%%% Informazioni %%%%%%%%%%
%%%%%%%%%%%%%%%%%%%%%%%%%%%%%%%%%%

\title[Ground states for the planar NLSE with a point defect]{Ground states for the planar NLSE with a point defect as minimizers of the constrained energy} 

\author[R. Adami]{Riccardo Adami}
\address{Politecnico di Torino, Dipartimento di Scienze Matematiche ``G.L. Lagrange'',Corso Duca degli Abruzzi, 24, 10129, Torino, Italy}

\email{riccardo.adami@polito.it}

\author[F. Boni]{Filippo Boni}
\address{Università degli Studi di Napoli Federico II,Dipartimento di Matematica e Applicazioni ``Renato Caccioppoli”, Via Cintia, Monte S. Angelo, 80126, Napoli, Italy}

\email{filippo.boni@unina.it}

\author[R. Carlone]{Raffaele Carlone}
\address{Università degli Studi di Napoli Federico II,Dipartimento di Matematica e Applicazioni ``Renato Caccioppoli”, Via Cintia, Monte S. Angelo, 80126, Napoli, Italy}
\email{raffaele.carlone@unina.it}

\author[L. Tentarelli]{Lorenzo Tentarelli}
\address{Politecnico di Torino, Dipartimento di Scienze Matematiche ``G.L. Lagrange'',Corso Duca degli Abruzzi, 24, 10129, Torino, Italy}
\email{lorenzo.tentarelli@polito.it}

\date{\today}

%%%%%%%%%%%%%%%%%%%%%%%%%%%%%%%%%%%%%%%%%%%%%%%%%%%%%%%%%%%%%%%%%%%%%%%%%%%%%%%%
%%%%%%%%%%%%%%%%%%%%%%%%%%%%%%%%%%%%%%%%%%%%%%%%%%%%%%%%%%%%%%%%%%%%%%%%%%%%%%%%
%%%%%%%%%%%%%%%%%%%%%%%%%%%%%%%%%%%%%%%%%%%%%%%%%%%%%%%%%%%%%%%%%%%%%%%%%%%%%%%%
%%%%%%%%%%%%%%%%%%%%%%%%%%%%%%%%%%%%%%%%%%%%%%%%%%%%%%%%%%%%%%%%%%%%%%%%%%%%%%%%
%%%%%%%%%%%%%%%%%%%%%%%%%%%%%%%%%%%%%%%%%%%%%%%%%%%%%%%%%%%%%%%%%%%%%%%%%%%%%%%%
%%%%%%%%%%%%%%%%%%%%%%%%%%%%%%%%%%%%%%%%%%%%%%%%%%%%%%%%%%%%%%%%%%%%%%%%%%%%%%%%
%%%%%%%%%%%%%%%%%%%%%%%%%%%%%%%%%%%%%%%%%%%%%%%%%%%%%%%%%%%%%%%%%%%%%%%%%%%%%%%%
%%%%%%%%%%%%%%%%%%%%%%%%%%%%%%%%%%%%%%%%%%%%%%%%%%%%%%%%%%%%%%%%%%%%%%%%%%%%%%%%
%%%%%%%%%%%%%%%%%%%%%%%%%%%%%%%%%%%%%%%%%%%%%%%%%%%%%%%%%%%%%%%%%%%%%%%%%%%%%%%%
%%%%%%%%%%%%%%%%%%%%%%%%%%%%%%%%%%%%%%%%%%%%%%%%%%%%%%%%%%%%%%%%%%%%%%%%%%%%%%%%
%%%%%%%%%%%%%%%%%%%%%%%%%%%%%%%%%%%%%%%%%%%%%%%%%%%%%%%%%%%%%%%%%%%%%%%%%%%%%%%%
%%%%%%%%%%%%%%%%%%%%%%%%%%%%%%%%%%%%%%%%%%%%%%%%%%%%%%%%%%%%%%%%%%%%%%%%%%%%%%%%

\begin{document}

%%%%%%%%%%%%%%%%%%%%%%%%%%%%%%%%%%
%%%%%%%%%% Intestazione %%%%%%%%%%
%%%%%%%%%%%%%%%%%%%%%%%%%%%%%%%%%%

\begin{abstract}
We investigate the ground states for the focusing, subcritical nonlinear Schr\"odinger equation with a point defect in dimension two, defined as the minimizers of the energy functional at fixed mass. We prove that ground states exist for every positive mass and show a logarithmic singularity at the defect. Moreover, up to a multiplication by a constant phase, they are positive, radially symmetric, and decreasing along the radial direction.
In order to overcome the obstacles arising from the uncommon structure of the energy space, that complicates the application of standard rearrangement theory, we move to the study of the minimizers of the action functional on the Nehari manifold and then establish a connection with the original problem. A refinement of a classical result on rearrangements is proved to obtain qualitative features of the ground states.
\end{abstract}

\maketitle

\vspace{-.5cm}
\noindent {\footnotesize \textul{AMS Subject Classification:} 35Q40, 35Q55, 35B07, 35B09, 35R99, 49J40, 49N15.}

\noindent {\footnotesize \textul{Keywords:} standing waves, nonlinear Schr\"odinger, ground states, delta interaction, radially symmetric solutions, rearrangements.}

%%%%%%%%%%%%%%%%%%%%%%%%%%%%%%%%%%%%%%%%%%%%%%%%%%%%%%%%%%%%%%%%%%%%%%%%%%%%%%%%
%%%%%%%%%%%%%%%%%%%%%%%%%%%%%%%%%%%%%%%%%%%%%%%%%%%%%%%%%%%%%%%%%%%%%%%%%%%%%%%%
%%%%%%%%%%%%%%%%%%%%%%%%%%%%%%%%%%%%%%%%%%%%%%%%%%%%%%%%%%%%%%%%%%%%%%%%%%%%%%%%
%%%%%%%%%%%%%%%%%%%%%%%%%%%%%%%%%%%%%%%%%%%%%%%%%%%%%%%%%%%%%%%%%%%%%%%%%%%%%%%%
%%%%%%%%%%%%%%%%%%%%%%%%%%%%%%%%%%%%%%%%%%%%%%%%%%%%%%%%%%%%%%%%%%%%%%%%%%%%%%%%
%%%%%%%%%%%%%%%%%%%%%%%%%%%%%%%%%%%%%%%%%%%%%%%%%%%%%%%%%%%%%%%%%%%%%%%%%%%%%%%%
%%%%%%%%%%%%%%%%%%%%%%%%%%%%%%%%%%%%%%%%%%%%%%%%%%%%%%%%%%%%%%%%%%%%%%%%%%%%%%%%
%%%%%%%%%%%%%%%%%%%%%%%%%%%%%%%%%%%%%%%%%%%%%%%%%%%%%%%%%%%%%%%%%%%%%%%%%%%%%%%%
%%%%%%%%%%%%%%%%%%%%%%%%%%%%%%%%%%%%%%%%%%%%%%%%%%%%%%%%%%%%%%%%%%%%%%%%%%%%%%%%
%%%%%%%%%%%%%%%%%%%%%%%%%%%%%%%%%%%%%%%%%%%%%%%%%%%%%%%%%%%%%%%%%%%%%%%%%%%%%%%%
%%%%%%%%%%%%%%%%%%%%%%%%%%%%%%%%%%%%%%%%%%%%%%%%%%%%%%%%%%%%%%%%%%%%%%%%%%%%%%%%
%%%%%%%%%%%%%%%%%%%%%%%%%%%%%%%%%%%%%%%%%%%%%%%%%%%%%%%%%%%%%%%%%%%%%%%%%%%%%%%%

%%%%%%%%%%%%%%%%%%%%%%%%%%%%%%%%%%
%%%%%%%%%% Introduzione %%%%%%%%%%
%%%%%%%%%%%%%%%%%%%%%%%%%%%%%%%%%%

\section{Introduction}

The Nonlinear Schr\"odinger Equation (NLSE) has provided for almost fifty years the effective description of the evolution of the wave function of 
a Bose-Einstein condensate (BEC) in the Gross-Pitaevskii regime.
More recently, interest has grown in the possibility of modeling a BEC in the presence of defects or impurities by means of a NLSE with an additional pointwise interaction located at the defect. If the spatial scale of the impurity is supposed to be much smaller than the dispersion of the wave function, one can describe it  by means of a  Dirac's delta potential \cite{CC-94,SM-20,SCMS-20}, obtaining the evolution equation
% \emph{point-defected} NLSE ($\delta$-NLSE), i.e.
\begin{equation}
 \label{eq-tNLS_form}
 i\f{\partial\psi}{\partial t}=(-\Delta+\alpha\delta_0)\psi +\beta |\psi|^{p-2}\psi,\qquad\alpha\in\R\setminus\{0\},\quad\beta\in \R\setminus\{0\}, \quad p>2,
\end{equation}
where the sign of $\beta$ establishes the focusing or defocusing character of the equation, and, correspondingly, the attractive or repulsive behaviour of the condensate.

A large part of the available results concerns the one-dimensional case, that models the so-called cigar-shaped condensates.
In particular, well-posedness was established in \cite{AN-09} for the entire class of pointwise potentials, while existence and stability of standing waves were shown in \cite{FOO,FJ,LFFKS-08, AN-09, AN-CMP13, ANV-DCDS13,ANR-20}. On the other hand, the well-posedness for the two and three-dimensional models was established in \cite{CFN-21}. Here we aim at discussing the existence and the properties of ground states for the two-dimensional case in the focusing regime, i.e. when $\beta<0$.

In fact, equation \eqref{eq-tNLS_form} is just formal in dimension two. In order to state it rigorously, one has to replace $-\Delta+\alpha\delta_0$ with a suitable self-adjoint operator $H_{\alpha}$, 
acting on $L^{2}(\R^{2})$ (see Section \ref{subsec-main} below). Such operator acts as the Laplacian far from the origin and its domain contains functions that exhibit a logarithmic singularity at the origin, like the fundamental solution of the Laplacian. As shown in \cite{AGHKH-88}, the operator $H_{\alpha}$ can be also understood as the limit of a sequence of Schr\"odinger operators $-\Delta+ V_{\ep}$, where, for every $\ep > 0$, the potential $V_{\ep}$ is regular, peaked, shrinking around the origin as $\ep\to 0$, and suitably renormalized: this is expected from an operator that aims at embodying a delta interaction at the origin.
 
Incidentally, let us mention that the literature on the NLSE with a potential is much wider than the corresponding one about NLSE with singular potentials: among the others, we mention the seminal works \cite{FW-86,RW-88} and the papers \cite{FO-03, FO-03-I} for their results about stability and instability of standing waves.

Moreover, the analysis of models with point interactions like \eqref{eq-tNLS_form} is strictly connected with the study of singular solutions for elliptic equations, that traces back to the eighties \cite{BL-81,CC-94, GKS-20, JPY-94, L-80, NS-86, NS-86bis, V-81}. In particular, it is well-known that solutions to the focusing stationary NLSE
\begin{equation}
\label{nls-stat}
-\Delta u - |u|^{p-2}u-\omega u=0,
\end{equation}
that are regular in $\R^{2}\setminus \{0\}$, vanish at infinity and are singular at $0$, behave like the fundamental solution of the Laplacian at the origin. 
%In this regard, the definition of the domain of $H_{\alpha}$ proves to be coherent.

Finally, we highlight that all results and proofs contained in the present paper concern the space dimension two. In \cite{ABCT-3d} we extend the results to the three-dimensional case.

\subsection{Future developments}
In our intention the present paper is the first step of a research programme devoted to the study of the standing waves of the NLSE on multi-dimensional structures, that is domains consisting of pieces of different space dimensions glued together through suitable boundary conditions. Such structures are known in the literature as \emph{quantum hybrids} and one of the simplest models is provided by a plane attached to a half-line.
It has been shown \cite{CE-11,ES-87,ES-88} that the conditions to be imposed at the junction between the plane and the half-line prescribe a logarithmic singularity for the restriction of the wave function to the plane, exactly like for the Schr\"odinger equation with a point interaction. Therefore, the present work lays the foundations of our research plan towards nonlinear quantum hybrids.

A further branch of the same research project  concerns a different family of singular perturbation of the Laplacian, called concentrated nonlinearities, namely
\begin{equation}
 \label{eq-tNLS_conc}
 i\f{\partial\psi}{\partial t}=(-\Delta+\tau|\psi|^{p-2}\delta_0)\psi,\qquad\tau\in\R\setminus\{0\},\quad p>2.
\end{equation}
The standard nonlinearity is no longer there, while a pointwise nonlinearity is present at the defect. Specifically, this can be done by replacing the strength of the delta interaction $\alpha$ by a nonlinear term that depends on the solution. As a particular choice, we took $\tau|\psi|^{p-2}$. Such equation has been studied in one \cite{AFH-21,AT-JFA01,CFT-Non19,HL-20,HL-21}, two \cite{ACCT-20,ACCT-21,CCT-ANIHPC19} and three dimensions \cite{ADFT-ANIHPC03,ADFT-ANIHPC04,ANO-JMP13,ANO-DCDS16}. It is, then, natural to investigate the dynamics of a system in the presence of both types of nonlinearity. The one-dimensional case and the case of the star graphs have been already addressed in \cite{BD-21} and \cite{ABD-20}, respectively, while high-dimensional cases are still unexplored.

%%%%%%%%%%%%%%%%%%%%%%%%%%%%%%%%%%%%%%%%%%%%%%%%%%%%%%%%%%%%%%%%%%%%%%%%%%%%%%%%
%%%%%%%%%%%%%%%%%%%%%%%%%%%%%%%%%%%%%%%%%%%%%%%%%%%%%%%%%%%%%%%%%%%%%%%%%%%%%%%%
%%%%%%%%%%%%%%%%%%%%%%%%%%%%%%%%%%%%%%%%%%%%%%%%%%%%%%%%%%%%%%%%%%%%%%%%%%%%%%%%
%%%%%%%%%%%%%%%%%%%%%%%%%%%%%%%%%%%%%%%%%%%%%%%%%%%%%%%%%%%%%%%%%%%%%%%%%%%%%%%%
%%%%%%%%%%%%%%%%%%%%%%%%%%%%%%%%%%%%%%%%%%%%%%%%%%%%%%%%%%%%%%%%%%%%%%%%%%%%%%%%
%%%%%%%%%%%%%%%%%%%%%%%%%%%%%%%%%%%%%%%%%%%%%%%%%%%%%%%%%%%%%%%%%%%%%%%%%%%%%%%%

\subsection{Setting and main results}
\label{subsec-main}

Let us stress again that writing \eqref{eq-tNLS_form} is formal, as in two dimensions the delta interaction is not a small perturbation of the Laplacian. It is well-known that a rigorous definition can be given through the theory of  self-adjoint extensions of hermitian operators. Eventually, one finds  \cite{AGHKH-88} that there exists a family $(H_\alpha)_{\alpha\in\R}$ of self-adjoint operators that realizes a nontrivial point perturbation of $-\Delta$.
 
The domains of such operators are
\begin{multline}
\label{domop}
D(H_{\alpha}):=
\big\{v\in L^{2}(\Rd):\exists q\in\C,\,\lambda>0\:\text{ s.t. }\: \\ v-q\G_\la=:\phi_{\la}\in H^{2}(\Rd)\:\text{ and }\: \phi_{\la}(0)=\left(\alpha+\theta_\la\right)q\big\},
\end{multline}
and the action reads
\begin{equation}
\label{eq-actH}
H_{\alpha}v:=-\lap\phi_{\la}-q\la\G_\la,\qquad\forall v\in D(H_{\alpha}).
\end{equation}
We denoted
\begin{equation}
\label{eq-thla}
\theta_{\la}:=\f{\log\left(\f{\sqrt{\la}}{2}\right)+\ga}{2\pi},
\end{equation}
with $\gamma$ denoting the Euler-Mascheroni constant, and $\G_\la$ the Green's function of $-\lap+\la$, namely
\begin{equation}
 \label{eq-green}
 \G_\la(\x):=\f{1}{2\pi}\F^{-1}\big[(|\k|^2+\la)^{-1}\big](\x)=\f{K_0(\sqrt{\la}\x)}{2\pi}.
\end{equation}
Here, $K_0$ denotes the modified Bessel function of the second kind of order $0$, also known as Macdonald function (see \cite[Section 9.6]{AS-65}), and $\F$ denotes the unitary Fourier transform. The function $\G_\lambda$ has a singular behaviour at the origin, namely
$$ \G_\lambda (\x) = - \frac {\log |\x|}{2 \pi} + o ( \log |\x| ), \qquad \x \to 0 $$
that prevents $\G_\lambda$ from belonging to $H^1 (\Rd)$.

Functions in $D(H_{\alpha})$ consist then of a \emph{regular part} $\phi_\la$, on which the operator acts as the standard Laplacian, and a \emph{singular part} $q\G_\la$, on which the operator acts as the multiplication by $-\lambda$. The two components are connected by the {boundary condition} $\phi_{\la}(0)=\left(\alpha+\theta_\la\right)q$. The strength $q$ of the singular part is called the {\em charge}. We highlight that $\la$ is a dumb parameter as it does not affect neither the definition of $H_\alpha$ nor the charge $q$ (see Remark \ref{qindla}). 

Finally, we recall that the spectrum of $H_\alpha$ is given by
\begin{equation}
 \label{eq-spectrum}
 \sigma(H_\alpha)=\{\ell_\alpha\}\cup[0,+\infty),\,\,\text{with}\,\,\,\ell_\alpha:=-4e^{-4\pi\alpha-2\gamma}<0\,\,\,\text{sole eigenvalue for any } \alpha \in \R.
\end{equation}

\medskip
The rigorous form of the focusing NLSE with a pointwise impurity ($\delta$-NLSE) is therefore 
\begin{equation}
\label{tdNLS}
i\f{\partial \psi}{\partial t}=H_\alpha \psi-|\psi|^{p-2}\psi,\qquad\alpha\in\R,\quad p>2.
\end{equation}
As proven in \cite{CFN-21}, the flow generated by \eqref{tdNLS} preserves the mass.

\begin{remark}
In getting \eqref{tdNLS} from \eqref{eq-tNLS_form}, we fixed $\beta=-1$. No generality is lost since, given any solution $\psi$ of \eqref{tdNLS}, then $\psi_{\beta}:=(-\beta)^{-\f{1}{p-2}}\psi$ solves 
\begin{equation*}
i\f{\partial \psi_{\beta}}{\partial t}=H_\alpha \psi_{\beta}+\beta|\psi_{\beta}|^{p-2}\psi_{\beta},\qquad\alpha\in\R, \quad \beta<0.
\end{equation*}
In doing this, many relevant thresholds of the equation could a priori be affected, but this is not actually the case since no threshold appears in the main results of the paper.
%and viceversa.
\end{remark}

Standing waves are solutions to \eqref{tdNLS} of the form $\psi=e^{i\omega t}u(\x)$, with $\omega\in\R$. An easy computation shows that $\psi$ is a standing wave for \eqref{tdNLS} whenever $u$ is a bound state, i.e. 
\begin{gather}
 \label{eq-regbound} u\in D(H_\alpha),\\
 \label{EL3} H_{\alpha}u+\omega u-|u|^{p-2}u=0.
\end{gather}
Among all the bound states, we focus on ground states. The proof that a ground state satisfies \eqref{eq-regbound} and \eqref{EL3} is straightforward and is reported in Appendix \ref{app-gbstates}.

In order to give a precise definition of the ground states of \eqref{EL3}, we first introduce the quadratic form associated with $H_\alpha$, which has domain
\begin{equation}
\label{dom}
D:=\big\{v\in L^{2}(\Rd):\exists q\in\C,\,\lambda>0\:\text{ s.t. }\:v-q\G_\la=:\phi_{\la}\in H^{1}(\Rd)\big\},
\end{equation}
and action
\begin{equation}
\label{eq-Q}
Q(v):=\|\na\phi_\la\|^2_{2}+\la\big(\|\phi_\la\|^2_{2}-\|v\|^2_{2}\big)+\left(\alpha+\theta_\la\right)|q|^2,\qquad\forall v\in D,
\end{equation}
where we denoted by $\left\langle \cdot,\cdot\right\rangle$ the hermitian product in $L^2(\Rd)$ and by $\|\cdot\|_p$ the usual norm in  $L^p(\Rd)$. As \eqref{eq-Q} is the quadratic form of the self-adjoint operator $H_\alpha$, it is independent of the choice of $\lambda$. Notice that, as expected when passing from operator to quadratic form, functions in $D$ have a rougher regular part than functions in $D(H_\alpha)$ (from $H^2(\Rd)$ to $H^1(\Rd)$), and that in $D$ there is no boundary condition linking the regular and the singular part. We observe that $Q(v)=\left\langle v,H_{\alpha}v\right\rangle$, whenever $u\in D(H_{\alpha})$.

Let us introduce the main object of our study, the energy functional associated with the $\delta$-NLSE, which is another quantity conserved by the flow generated by \eqref{tdNLS} (\cite{CFN-21}).

\begin{definition}
Given $\alpha\in\R$ and $p>2$, the \emph{$\delta$-NLS energy} is the functional $E:D\to\R$ defined by
\begin{align}
\label{E}
E(v):= & \f{1}{2}Q(v)-\f{1}{p}\|v\|_p^p\nonumber\\
= & \f{1}{2}\|\na\phi_\la\|^2_{2}+\f{\la}{2}\big(\|\phi_\la\|^2_{2}-\|v\|^2_{2}\big)+\f{\left(\alpha+\theta_\la\right)}{2}|q|^2-\f{1}{p}\|v\|^{p}_{p}.
\end{align}
\end{definition}

\begin{remark} \label{extension}
As a peculiar feature of point interactions in dimensions two and three, the energy space $D$ is strictly larger than $H^1$. Furthermore, if $v$ belongs to $H^1$, i.e. it has no charge,
then its energy reduces to the standard
\emph{NLS energy} defined by
\begin{equation}
\label{Epclass}
E^{0}(v):=\f{1}{2}\|\na v\|^2_{2}-\f{1}{p}\|v\|^{p}_{p},
\end{equation}
so that the $\delta$-{\em NLS} energy is an extension of the {\em NLS} energy.
\end{remark}

We can now give the following fundamental definition.

\begin{definition}
Given $\mu>0$,  a function $u$ belonging to the space
\begin{equation*}
D_{\mu}:=\{v\in D:\|v\|_{2}^{2}=\mu\}.
\end{equation*}
and satisfying
%\in D_{\mu}$ such that
\begin{equation*}
E(u)=\inf_{v\in D_{\mu}}E(v)=:\Eps(\mu),
\end{equation*}
is a {\em ground state at mass $\mu$} for the NLSE with a point defect.
\end{definition}
Thus, a ground state is a minimizer of the energy constrained to a submanifold of constant mass $\mu$. It turns out that the whole set of ground states at mass $\mu$ is orbitally stable for any $\mu>0$: the result is proven by adapting the techniques introduced in \cite{CL-82} and is reported in Appendix \ref{app-stab}.

\medskip
We can now state the main result of the paper. 

\begin{theorem}[$\delta$-NLS ground states]
\label{exchar-gs}
Let $p\in(2,4)$ and $\alpha\in \R$. Then, for every $\mu>0$,
\begin{itemize}
 \item[(i)] there exists a ground state for the $\delta$-NLS at mass $\mu$;
 \item[(ii)] if, fixed $\lambda > 0$, $u=\phi_\la+q\G_\la$ is a  ground state, then:
 \begin{itemize}
  \item[(a)] for any $\lambda>0$ both $\phi_\la$ and $q$ are not identically zero,
  \item[(b)] $u$ is positive, radially symmetric, and decreasing along the radial direction, up to multiplication by a constant phase; moreover, $\phi_\la$ is nonnegative if $\lambda=\omega$, and positive if $\lambda>\omega$, with $\omega=\mu^{-1}(\|u\|_p^p-Q(u))$.
 \end{itemize}
 \end{itemize}
\end{theorem}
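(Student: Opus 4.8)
The plan is to characterize the energy ground states as minimizers of the action $S_\omega(v) := \tfrac12 Q(v) + \tfrac{\omega}{2}\|v\|_2^2 - \tfrac1p\|v\|_p^p$ on the Nehari manifold $\mathcal{N}_\omega := \{v\in D\setminus\{0\} : Q(v)+\omega\|v\|_2^2 = \|v\|_p^p\}$, and then to transfer the conclusions back to $\mathcal{E}(\mu)$. Fix $\omega$ large enough that the quadratic form $Q_\omega := Q + \omega\|\cdot\|_2^2$ is coercive on $D$; by the spectral picture \eqref{eq-spectrum} this holds as soon as $\omega > -\ell_\alpha$. On $\mathcal{N}_\omega$ one has $S_\omega(v) = (\tfrac12 - \tfrac1p)\|v\|_p^p$, and the coercivity of $Q_\omega$ together with the subcritical Gagliardo--Nirenberg inequality (available precisely for $p<4$) shows that the Nehari level $d(\omega) := \inf_{\mathcal{N}_\omega} S_\omega$ is strictly positive. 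I would establish existence of a minimizer of $S_\omega$ on $\mathcal{N}_\omega$, equivalently a minimizer of the scale-invariant Weinstein quotient $J(v) := Q_\omega(v)^{p/2}/\|v\|_p^p$.

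The crux of the existence part is compactness of a minimizing sequence. Coercivity bounds it in $D$; extracting a weak limit and applying the Brezis--Lieb lemma to the $L^p$ term, the only obstruction is mass leaking to spatial infinity. Since the point interaction breaks translation invariance and is attractive --- $H_\alpha$ carries the negative eigenvalue $\ell_\alpha$ for every $\alpha$ --- I expect the decisive ingredient to be a strict binding inequality $d(\omega) < d^0(\omega)$ against the translation-invariant free level, which I would prove by a competitor obtained by perturbing a free soliton with the linear ground state (the eigenfunction at $\ell_\alpha$) and exploiting the logarithmic gain at the origin. This strict gap forbids dichotomy and forces strong $L^p$ convergence, so the weak limit is a genuine action ground state.

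To deduce (i) I then bridge the two problems. Any energy ground state at mass $\mu$ satisfies \eqref{EL3} with multiplier $\omega = \mu^{-1}(\|u\|_p^p - Q(u))$, hence lies on $\mathcal{N}_\omega$; a scaling comparison on $\mathcal{N}_\omega$ shows it minimizes $S_\omega$. Conversely, a continuity/surjectivity argument for the mass map $\omega \mapsto \|u_\omega\|_2^2$ over the admissible range of $\omega$ produces, after rescaling, an energy minimizer at every prescribed $\mu>0$. Statement (a) follows along the way: if $q=0$ the minimizer would lie in $H^1(\Rd)$ with $E(u)=E^0(u)\geq\mathcal{E}^0(\mu)>\mathcal{E}(\mu)$, contradicting the binding inequality, whereas $\phi_\la\equiv 0$ is excluded by feeding $u=q\mathcal{G}_\la$ into \eqref{EL3} and matching the logarithmic singularities of the two sides, which are of order $\abs{\log\abs{\x}}$ on the left and $\abs{\log\abs{\x}}^{p-1}$ on the right --- incompatible for $p>2$.

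Finally (b). Reality and positivity up to a constant phase come from a diamagnetic-type inequality $Q(|u|)\leq Q(u)$ (first Beurling--Deny criterion for $Q_\omega$), with mass and $L^p$ norm unchanged, after which the logarithmic blow-up at the defect forces the charge $q>0$. The sign of $\phi_\la$ is read off \eqref{EL3}: choosing $\la=\omega$ gives $(-\Delta+\omega)\phi_\omega = |u|^{p-2}u\geq 0$, so $\phi_\omega\geq 0$ by positivity of the resolvent kernel, and for $\la>\omega$ the identity $\phi_\la = \phi_\omega + q(\mathcal{G}_\omega-\mathcal{G}_\la)$ with $q>0$ and $\mathcal{G}_\omega>\mathcal{G}_\la$ yields $\phi_\la>0$. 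The genuinely hard step --- and the reason the action formulation is preferable --- is radial symmetry and monotonicity: in the scale-invariant setting one is free to symmetrize the regular part, since the mass need not be preserved, so only the monotonicity of $Q_\omega$ and $\|u\|_p$ under rearrangement is at stake. Symmetrizing $\phi_\la$ to its Schwarz rearrangement $\phi_\la^*$ (a genuine $H^1(\Rd)$ function) while retaining the already radial decreasing $q\mathcal{G}_\la$ lowers $\|\na\phi_\la\|_2^2$ and, using Hardy--Littlewood for the cross term $\langle\phi_\la,\mathcal{G}_\la\rangle$ together with the choice $\la\geq\omega$, does not increase $Q_\omega$. What is not standard is the superadditivity $\|\phi_\la^*+q\mathcal{G}_\la\|_p \geq \|\phi_\la+q\mathcal{G}_\la\|_p$ for the rearrangement of a sum with a fixed radial decreasing profile; proving this ad hoc rearrangement inequality in the non-$H^1$ energy space, and extracting from its equality case that $u$ is itself radial and decreasing, is where I expect the main difficulty to lie.
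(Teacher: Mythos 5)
Your part (ii) follows essentially the paper's own strategy: pass to action minimizers, reformulate the Nehari problem as minimizing $Q_\omega$ at fixed $L^p$-norm, symmetrize only the regular part with $\la=\omega$, and invoke an ad hoc rearrangement inequality for $\|\phi^*+q\G_\omega\|_p$ together with its equality case (this is exactly Proposition \ref{fgtheor} and Propositions \ref{equivprob3}--\ref{minsymm} in the paper). Two minor caveats there: the Beurling--Deny/diamagnetic inequality $Q(|u|)\le Q(u)$ yields that \emph{some} minimizer is nonnegative, whereas the theorem asserts positivity up to phase of \emph{every} ground state, so you still need the equality-case analysis that the paper carries out by hand with the phase function $\eta$ and the strict $L^p$ inequality; and your binding inequality $d(\omega)<d^0(\omega)$ can be had much more cheaply than by a perturbative competitor --- the positive free minimizer cannot satisfy the boundary condition $\phi_\la(\z)=(\alpha+\theta_\la)q$ with $q=0$ without vanishing at the origin.

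The genuine gap is in part (i). The paper proves existence of energy ground states \emph{directly}, by concentration--compactness for $E$ on $D_\mu$ (extended Gagliardo--Nirenberg inequalities on $D$, the strict level inequality $\Eps(\mu)<\Eps^0(\mu)<0$, charges of minimizing sequences bounded away from zero, Brezis--Lieb splitting plus scaling to exclude dichotomy), and uses Lemma \ref{chargs} only to transport the qualitative properties from action minimizers back to ground states. Your route --- prove existence of action minimizers for all $\omega>\omega_0$ and then recover energy minimizers ``by a continuity/surjectivity argument for the mass map $\omega\mapsto\|u_\omega\|_2^2$, after rescaling'' --- does not close. First, the bridge between the two problems only goes one way: Lemma \ref{chargs} says an energy ground state is an action minimizer at its own multiplier, but the converse (an action minimizer whose mass happens to equal $\mu$ is an energy ground state at mass $\mu$) is \emph{not} automatic: if $v\in D_\mu$ had $E(v)<E(u)$, then indeed $S_\omega(v)<d(\omega)$, but $v$ need not lie on $N_\omega$, and its Nehari projection $\beta(v)v$ \emph{increases} the action along the ray, so no contradiction with $d(\omega)$ arises. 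This converse is precisely the delicate ``action versus energy'' issue studied in \cite{DST-21} and can fail in general. Second, the ``mass map'' is not even well defined without uniqueness of $u_\omega$, and its continuity and surjectivity onto $(0,+\infty)$ would require separate asymptotic analysis as $\omega\downarrow\omega_0$ and $\omega\to+\infty$. Third, ``after rescaling'' is unavailable here: the point interaction destroys scaling invariance, so one cannot rescale a solution to adjust its mass while remaining a solution (or a competitor) of the same problem. As it stands, your argument establishes Theorem \ref{exchar-actmin} but not Theorem \ref{exchar-gs}--(i); a direct minimization of $E$ on $D_\mu$, as in the paper, is needed.
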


\begin{remark}
 One can also see that if $u$ is a ground state for the $\delta$-NLSE, then the associated frequency $\omega=\mu^{-1}(\|u\|_p^p-Q(u))$ is positive. Indeed, by the Lagrange Multiplier Theorem, one has
 \[
  \langle E'(u),v\rangle+\omega\langle u,v\rangle=0,\qquad\forall v\in D,
 \]
 so that, setting $v=u$ and combining with \eqref{ELdelta}, \eqref{eq-Q} and \eqref{E},
 \begin{equation}
 \label{eq-eneq}
  2E(u)-\frac{p-2}{p}\|u\|_p^p=-\omega\|u\|_2^2.
 \end{equation}
 Then, by Remark \ref{extension},
 \begin{equation*}
  E(u)=\Eps(\mu)\leq\Eps^{0}(\mu):=\inf_{v\in H^{1}_{\mu}(\Rd)}E^0(v),
 \end{equation*}
 with $H^{1}_{\mu}(\Rd):=\{v\in H^1(\Rd):\|v\|_2^2=\mu\}$,  which is negative whenever $p\in(2,4)$ (see, e.g., \cite{L-ANIHPC84} or the proof of Proposition \ref{exstanden}). Thus \eqref{eq-eneq} implies that $\omega>0$ .
\end{remark}

We stress that Theorem \ref{exchar-gs} treats power nonlinearities with $p\in(2,4)$ only, namely the  subcritical nonlinearities of the NLSE, since as in the standard case this is necessary to establish  boundedness from below of the constrained energy without prescriptions on the mass $\mu$ (see Proposition \ref{Epabound}). In addition, also regarding existence, positivity, and symmetry, Theorem \ref{exchar-gs} retraces the results on the NLSE. However, point (ii)(a) shows that $\delta$-NLS ground states and NLS ground states cannot coincide as the singular part of the former ones cannot vanish.

As a final remark, we highlight that, while point (i) of Theorem \ref{exchar-gs} is proved by minimization of $E$ on $D_\mu$, point (ii) is proved through  minimization of another functional, called {\em Action}, constrained to a set called {\em Nehari manifold}. More precisely, %the following functional
\begin{definition}
Fixed $\omega\in\R$, the $\delta$-{NLS action} at frequency $\omega$ is the functional $S_\omega:D\to\R$ defined by
\begin{equation}
\label{SwE}
S_\omega(v):=E(v)+\f{\omega}{2}\|v\|_2^2.
\end{equation} 
\end{definition}
We introduce the constraint
\begin{definition}
Fixed $\omega \in \R$, the Nehari manifold at frequency $\omega$ associated to the $\delta$-{\em NLS} is defined by
\begin{equation}
\label{eq-nehari}
N_{\omega}:=\{v\in D\setminus\{0\}:I_{\omega}(v)=0\},
\end{equation}
where $I_\omega:D\to\R$ denotes the quantity
\begin{equation}
\label{eq-Inehari}
I_{\omega}(v):=\langle S_{\omega}'(v),v\rangle=\|\na\phi_\la\|^2_{2}+\la\|\phi_\la\|^2_{2}+(\omega-\la)\|v\|^2_{2}+\left(\alpha+\theta_\la\right)|q|^2-\|v\|^{p}_{p}.
\end{equation}
\end{definition}
As a consequence, the minimizers of the \emph{$\delta$-NLS} action at frequency $\omega$ are all functions $u\in N_{\omega}$ such that
\begin{equation}
\label{eq-infact}
S_{\omega}(u)=d(\omega):=\inf_{v\in N_{\omega}}S_{\omega}(v),
\end{equation}
and, as showed in Appendix \ref{app-gbstates}, they are bound states of the $\delta$-NLS.

\begin{remark}
\label{rem-qomega}
We  use the notation
\begin{equation}
\label{Qomega}
Q_\omega(v):=Q(v)+\omega\|v\|_2^2,
\end{equation} 
so that
\begin{equation}
\label{eq-utile}
 S_\omega(v)=\f{1}{2}Q_\omega(v)-\f{1}{p}\|v\|_p^p\qquad\text{and}\qquad I_\omega(v)=Q_\omega(v)-\|v\|_p^p.
\end{equation}
\end{remark}

The link between minimizers of the action and ground states is provided by the following Lemma, whose proof is an adaptation of what established in \cite{DST-21} and \cite{JL-21} for the NLSE. We report it in Appendix \ref{app-enact}.

\begin{lemma}
\label{chargs}
Let $p>2$, $\alpha\in\R$ and $\mu>0$. If $u$ is a ground state of mass $\mu$, then it is also a minimizer of the action at the frequency $\omega=\mu^{-1}(\|u\|_p^p-Q(u))$.
\end{lemma}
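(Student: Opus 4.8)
The plan is to establish the two inequalities $d(\omega)\le S_\omega(u)$ and $d(\omega)\ge S_\omega(u)$, where throughout I write $\omega:=\mu^{-1}(\|u\|_p^p-Q(u))$ for the prescribed frequency. The first inequality should come essentially for free: since $u$ is a ground state at mass $\mu>0$ we have $\|u\|_2^2=\mu$ and in particular $u\neq 0$, while the very definition of $\omega$ rearranges into $Q(u)+\omega\|u\|_2^2=\|u\|_p^p$, that is $Q_\omega(u)=\|u\|_p^p$. By \eqref{eq-utile} this is exactly $I_\omega(u)=0$, so $u\in N_\omega$ and therefore $d(\omega)\le S_\omega(u)$ by definition of the infimum \eqref{eq-infact}. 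No use of the Euler--Lagrange equation is needed here: membership in the Nehari manifold is built into the choice of $\omega$.

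For the reverse inequality I would fix an arbitrary $v\in N_\omega$ and compare it with $u$ by a two-step rescaling. First, along the ray $t\mapsto tv$ one has, by \eqref{eq-utile}, $S_\omega(tv)=\f{t^2}{2}Q_\omega(v)-\f{t^p}{p}\|v\|_p^p$, hence $\f{d}{dt}S_\omega(tv)=t\,Q_\omega(v)-t^{p-1}\|v\|_p^p$. Since $v\in N_\omega$ forces $Q_\omega(v)=\|v\|_p^p$, and since $v\neq 0$ gives $\|v\|_p^p>0$, the unique positive critical point is $t=1$; as $p>2$, the map increases on $(0,1)$ and decreases on $(1,\infty)$, so it attains its maximum over $t>0$ at $t=1$. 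In particular $S_\omega(v)\ge S_\omega(t_0 v)$ for the mass-normalizing factor $t_0:=\sqrt{\mu/\|v\|_2^2}$. Because $D$ is stable under multiplication by scalars (if $v=\phi_\la+q\G_\la$ then $t_0 v=(t_0\phi_\la)+(t_0 q)\G_\la$ with $t_0\phi_\la\in H^1(\Rd)$), the rescaled function satisfies $t_0 v\in D$ and $\|t_0 v\|_2^2=\mu$, i.e.\ $t_0 v\in D_\mu$. Using the ground-state minimality $E(t_0 v)\ge\Eps(\mu)=E(u)$ together with \eqref{SwE} and $\|t_0v\|_2^2=\mu=\|u\|_2^2$, I would conclude
\[
S_\omega(v)\ge S_\omega(t_0 v)=E(t_0 v)+\f{\omega}{2}\mu\ge E(u)+\f{\omega}{2}\mu=S_\omega(u).
\]
Taking the infimum over $v\in N_\omega$ gives $d(\omega)\ge S_\omega(u)$, and combining with the first step yields $S_\omega(u)=d(\omega)$, i.e.\ $u$ is a minimizer of $S_\omega$ on $N_\omega$.

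The core of the argument is the interplay between two different scalings: the action-fibering scaling, which is maximized exactly on $N_\omega$, and the mass-normalizing scaling, which feeds the competitor into the energy problem. The main point to get right --- and the only place where subcriticality really enters --- is that $t=1$ be the genuine maximizer of $t\mapsto S_\omega(tv)$, which requires both $p>2$ and the positivity $Q_\omega(v)=\|v\|_p^p>0$ on $N_\omega$; I would therefore make explicit that every $v\in D\setminus\{0\}$ has $\|v\|_p>0$, which holds since both the regular part (in $H^1(\Rd)\subset L^p$) and the singular part $q\G_\la$ (logarithmic, hence in $L^p_{loc}$ for every finite $p$) lie in $L^p(\Rd)$. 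The remaining verifications --- stability of $D$ and $D_\mu$ under the rescaling and the quadratic scaling of the mass --- are routine bookkeeping.
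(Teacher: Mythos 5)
Your proposal is correct and follows essentially the same route as the paper's proof: the paper also rescales an arbitrary Nehari competitor $v$ to the mass constraint, uses that $\sigma\mapsto S_\omega(\sigma v)$ is maximized at $\sigma=1$ when $v\in N_\omega$, and invokes the ground-state minimality of $u$ at mass $\mu$ (the paper phrases this as a contradiction argument, you phrase it as two inequalities, and you additionally make explicit the trivial fact $I_\omega(u)=0$, i.e.\ $u\in N_\omega$, which the paper leaves implicit). One minor terminological point: the monotonicity of the fibering map needs only $p>2$, not $L^2$-subcriticality $p<4$, which is consistent with the lemma being stated for all $p>2$.
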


We give the following result for the minimizers of the action functional.

\begin{theorem}[$\delta$-NLS action minimizers]
\label{exchar-actmin}
Let $p>2$ and $\alpha\in \R$. Then, 
\begin{itemize}
\item[(i)] a minimizer of the action \eqref{SwE}  at frequency $\omega$ does exist if and only if $\omega>\omega_0:=-\ell_\alpha$, with $\ell_\alpha$ defined in \eqref{eq-spectrum};

\item[(ii)] if, fixed $\la$, $u=\phi_\la+q\G_\la$ is a minimizer of the action \eqref{SwE} at frequency $\omega>\omega_0$, then:
\begin{itemize}
 \item[(a)] for any $\lambda>0$ both $\phi_\la$ and $q$ are not identically zero,
 \item[(b)] $u$ is positive, radially symmetric, and decreasing along the radial direction, up to multiplication by a constant phase factor; in particular, $\phi_\la$ is nonnegative if $\lambda=\omega$, and positive if $\lambda>\omega$.
\end{itemize}
\end{itemize}
\end{theorem}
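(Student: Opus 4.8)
The plan is to exploit the homogeneity of $S_\omega$ on $N_\omega$, settle existence by a concentration-compactness argument in $D$, and derive the qualitative features from a rearrangement adapted to the decomposition $v=\phi_\la+q\G_\la$. First I record that, by \eqref{eq-utile}, on $N_\omega$ one has $S_\omega(v)=\tf{p-2}{2p}\|v\|_p^p$, and that for any $v\in D$ with $Q_\omega(v)>0$ the ray $t\mapsto tv$ crosses $N_\omega$ at a unique $t_*>0$; minimising over such rays gives $d(\omega)=\tf{p-2}{2p}\big(\inf\{Q_\omega(v)/\|v\|_p^2:Q_\omega(v)>0\}\big)^{p/(p-2)}$. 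By \eqref{eq-spectrum}, $Q(v)\geq\ell_\alpha\|v\|_2^2$ with equality on the eigenfunction of $H_\alpha$, so $Q_\omega=Q+\omega\|\cdot\|_2^2$ is positive definite precisely when $\omega>\omega_0=-\ell_\alpha$. For $\omega\leq\omega_0$ there are directions along which $Q_\omega\leq0$; perturbing near them keeps $\|\cdot\|_p$ bounded below while $Q_\omega\to0^+$, so the displayed infimum is $0$ and $d(\omega)=0$. Since every $u\in N_\omega$ satisfies $S_\omega(u)=\tf{p-2}{2p}\|u\|_p^p>0$, the level $0$ is not attained: no minimiser exists, which is the necessity in (i).

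For sufficiency, fix $\omega>\omega_0$, so that $Q_\omega$ is coercive on $D$, i.e. comparable to $\|\phi_\la\|_{H^1(\Rd)}^2+\abs{q}^2$. Combined with $H^1(\Rd)\hookrightarrow L^p(\Rd)$ and $\G_\la\in L^2\cap L^p$, this yields $\|v\|_p^2\lesssim Q_\omega(v)$, hence $d(\omega)>0$, and the $D$-boundedness of any minimising sequence $(v_n)\subset N_\omega$. Writing $v_n=\phi_n+q_n\G_\la$, the charges $q_n$ converge and the singular parts $q_n\G_\la$ converge strongly in $L^p(\Rd)$, so the only possible defect of compactness lives in the regular parts $\phi_n\in H^1(\Rd)$, exactly as for the free NLS; since the impurity breaks translations, the sole obstruction is mass escaping to infinity. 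I rule it out through the strict binding inequality $d(\omega)<d^0(\omega)$, with $d^0(\omega)$ the free-NLS action level: the free ground state $Z$ has no charge, hence lies in $N_\omega$ with $S_\omega(Z)=d^0(\omega)$, and switching on a charge read at $\lambda=\omega$ gives $\tf{d}{d\ep}\big|_{\ep=0}S_\omega(Z+\ep\G_\om)=-\int_{\Rd}Z^{p-1}\G_\om\dx<0$, the Nehari reprojection being of higher order because $\scal{S_\omega'(Z)}{Z}=I_\omega(Z)=0$. A Brezis--Lieb splitting of $\|\cdot\|_p^p$ and of $Q_\omega$ then shows that any part carried to infinity would cost at least $d^0(\omega)$, which is incompatible with $d(\omega)<d^0(\omega)$; thus $v_n$ converges strongly and its limit is a minimiser.

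Turning to (ii), the charge cannot vanish: if $q=0$ then $u\in H^1(\Rd)$ and, by Remark \ref{extension}, $S_\omega(u)=S_\omega^0(u)\geq d^0(\omega)>d(\omega)$, contradicting minimality (equivalently, the charge-switching perturbation above strictly lowers $S_\omega$). The regular part cannot vanish either: were $\phi_\la\equiv0$ for some $\lambda$, then $u=q\G_\la$ and the bound-state equation \eqref{EL3} would impose $(\omega-\lambda)\,q\G_\la=\abs{q\G_\la}^{p-2}q\G_\la$, impossible since the profiles $\G_\la$ and $\G_\la^{\,p-1}$ disagree (already at the logarithmic singularity at the origin). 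For the symmetry in (ii)(b) I invoke the ad hoc symmetric-decreasing rearrangement for $D$: after fixing a constant phase so that $u$ is real and nonnegative, the rearrangement preserves $\|u\|_2$, $\|u\|_p$ and the charge and does not increase $Q_\omega$, hence sends minimisers to minimisers, and the equality case forces $u$ to be radial and radially decreasing. Finally, reading \eqref{EL3} as $(-\lap+\lambda)\phi_\la=u^{p-1}+(\lambda-\omega)u$ and using that the kernel $\G_\la$ of $(-\lap+\lambda)^{-1}$ is positive, one gets $\phi_\la\geq0$ for $\lambda=\omega$ and $\phi_\la>0$ for $\lambda>\omega$.

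The main obstacle is twofold. For existence it is the concentration-compactness step, and specifically the strict inequality $d(\omega)<d^0(\omega)$, which is what forbids the minimising sequence from leaking to infinity and simultaneously explains why the charge must be nontrivial. For the qualitative part it is the rearrangement itself: because $D$ splits each function into an $H^1$ regular part and a logarithmically singular charge part, the P\'olya--Szeg\H{o} inequality does not apply directly, and one must show that symmetrisation does not increase $Q_\omega$ while controlling the transformation of the charge -- this is the delicate ad hoc result announced in the abstract.
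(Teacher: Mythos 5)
Your part (i) and part (ii)(a) are essentially sound and follow the same skeleton as the paper: the quotient/ray formulation of $d(\omega)$, nonexistence for $\omega\le\omega_0$ by approaching the null directions of $Q_\omega$ (the paper does this concretely with the family $q\G_\la$ in Lemma \ref{Nomega} and Proposition \ref{pro-dzero}), coercivity of $Q_\omega$ for $d(\omega)>0$ (Proposition \ref{dpos}), and a Brezis--Lieb argument powered by the strict inequality $d(\omega)<d^0(\omega)$. Your proof of that strict inequality is genuinely different from the paper's and is a valid alternative: since $Q_\omega$ is diagonal in the decomposition at $\la=\omega$, the first variation of $S_\omega$ at the free minimizer $Z$ in the pure-charge direction is $-\int_{\Rd}Z^{p-1}\G_\omega\dx<0$, and the Nehari reprojection is indeed higher order because $I_\omega(Z)=0$. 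The paper (Proposition \ref{comparinf}) argues instead that equality $d(\omega)=d^0(\omega)$ would make $Z$ a $\delta$-NLS minimizer, hence a bound state satisfying the boundary condition in \eqref{eq-regbound} with $q=0$, forcing $Z(\z)=0$ against positivity. Your version is more quantitative and self-contained; the paper's is shorter but leans on Appendix \ref{app-gbstates}.

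The genuine gaps are in (ii)(b). First, positivity: you write ``after fixing a constant phase so that $u$ is real and nonnegative''. This is circular. A constant phase can only normalize the charge to $q>0$; it cannot make the regular part nonnegative, since a priori a complex minimizer $u=\phi_\omega+q\G_\omega$ need not be of the form $e^{i\theta}\abs{u}$ at all. Proving that the phase of $\phi_\omega$ is a.e.\ aligned with that of $q$ is precisely the content of Proposition \ref{realposgs}, and it needs an argument: comparing $u$ with $\widetilde u:=\abs{\phi_\omega}+q\G_\omega$ gives $\|u\|_p<\|\widetilde u\|_p$ strictly when phases misalign while $Q_\omega(\widetilde u)\le Q_\omega(u)$, and the contradiction comes from scaling \emph{inside the equivalent problem} ``minimize $Q_\omega$ at fixed $\|\cdot\|_p^p$'' (Proposition \ref{equivprob3}); as Remark \ref{no-rearr-gs} explains, this scaling step runs in the wrong direction if one works with $S_\omega$ on $N_\omega$ or with $E$ at fixed mass, so the reformulation is not optional. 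Second, symmetry: you invoke a rearrangement on $D$ that ``preserves $\|u\|_2$, $\|u\|_p$ and the charge and does not increase $Q_\omega$'', but no such operation is constructed, and these are not the properties of the actual tool. The paper rearranges \emph{only the regular part}, $\widetilde u:=\phi_\omega^*+q\G_\omega$: then $Q_\omega(\widetilde u)\le Q_\omega(u)$ by P\'olya--Szeg\H{o} \eqref{PS} (again using diagonality at $\la=\omega$), while the $L^p$ norm is \emph{not} preserved but weakly increased, $\|u\|_p\le\|\widetilde u\|_p$; this is exactly Proposition \ref{fgtheor}, the inequality $\|f+g\|_p\le\|f^*+g^*\|_p$ together with its equality case for $f$ radially strictly decreasing, and it is the equality case that forces $\phi_\omega=\phi_\omega^*$ for a minimizer (Proposition \ref{minsymm}). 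Since you leave this inequality and its equality characterization as a black box with misstated properties, the symmetry claim is unsupported; and the gap propagates to your final step, because the convolution identity $(-\lap+\la)\phi_\la=u^{p-1}+(\la-\omega)u$ yields a sign for $\phi_\la$ only once $u\ge0$ is already known (the paper gets positivity of $\phi_\la$ for $\la>\omega$ more directly from \eqref{Gla<Gnu} in Corollary \ref{regpos}).
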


First, we note that, in view of Lemma \ref{chargs}, point (ii) of Theorem \ref{exchar-gs} is a straightforward consequence of Theorem \ref{exchar-actmin}. Indeed, if there exists a ground state of mass $\mu$, then by Lemma \ref{chargs} and point (i) of Theorem \ref{exchar-actmin} it is also a minimizer of the action at frequency $\omega>\omega_0$. Hence, the conclusion follows by point (ii) of Theorem \ref{exchar-actmin}. %Moreover, the comparisons with the NLSE and the C-NLSE made after Theorem \ref{exchar-gs} are still valid.
  
Furthermore, we mention that in order to establish (ii)(b) we use an equivalent formulation of the problem of minimization of the action consisting in minimizing $Q_\omega$ on the functions in $D$ with fixed $L^p$ norm. More details on this point are given at the beginning of Section \ref{sec-char} and in Remark \ref{no-rearr-gs}. The technique relies on the minimality of the ground states only and is different from other classical techniques, such as the {moving planes} introduced in \cite{GNN-79}, and from more recent variational methods like \cite{FGI-21,JL-21}, where the Euler-Lagrange equation is used to enhance the regularity of the minimizers.

During the final draft of the present paper we got acquainted that the results of Theorem \ref{exchar-actmin} had been proved independently in the contemporary work \cite{FGI-21}.  In particular, except from the overlap of point (i)(a) of Theorem \ref{exchar-actmin} with \cite[Theorem 1.2]{FGI-21}, the proof of point (ii)(b) of Theorem \ref{exchar-actmin} relies on different techniques, as explained above. Moreover, the goals of the two papers are different: while the present paper is mainly focused on the study of ground states of the energy at fixed mass, \cite{FGI-21} deals with the minimizers of the action under the Nehari's constraint, discussing their orbital stability for asymptotic regimes of the frequency $\omega$. 

%%%%%%%%%%%%%%%%%%%%%%%%%%%%%%%%%%%%%%%%%%%%%%%%%%%%%%%%%%%%%%%%%%%%%%%%%%%%%%%%
%%%%%%%%%%%%%%%%%%%%%%%%%%%%%%%%%%%%%%%%%%%%%%%%%%%%%%%%%%%%%%%%%%%%%%%%%%%%%%%%
%%%%%%%%%%%%%%%%%%%%%%%%%%%%%%%%%%%%%%%%%%%%%%%%%%%%%%%%%%%%%%%%%%%%%%%%%%%%%%%%
%%%%%%%%%%%%%%%%%%%%%%%%%%%%%%%%%%%%%%%%%%%%%%%%%%%%%%%%%%%%%%%%%%%%%%%%%%%%%%%%
%%%%%%%%%%%%%%%%%%%%%%%%%%%%%%%%%%%%%%%%%%%%%%%%%%%%%%%%%%%%%%%%%%%%%%%%%%%%%%%%
%%%%%%%%%%%%%%%%%%%%%%%%%%%%%%%%%%%%%%%%%%%%%%%%%%%%%%%%%%%%%%%%%%%%%%%%%%%%%%%%

\subsection*{Organization of the paper}

\begin{itemize}
 \item %[(a)] 
Section \ref{sec-prel} introduces some preliminary results that are useful throughout the paper; more precisely:
 \begin{itemize}
  \item in Section \ref{subsec-green} we recall some well-known features of the Green's function of $-\Delta+\lambda$,
  \item in Section \ref{subsec-GN} we establish two extensions of the Gagliardo-Nirenberg inequality (Proposition \ref{GNsing}),
  \item in Section \ref{subsec-rearr} we establish a rearrangement inequality for the $L^p$-norms of the sum of nonnegative functions (Proposition \ref{fgtheor});
 \end{itemize}
 \item %[(b)] 
Section \ref{sec-gs} addresses the existence of  ground states (Theorem \ref{exchar-gs}--(i));
 \item %[(c)] 
Section \ref{sec-am} addresses the existence of action minimizers (Theorem \ref{exchar-actmin}--(i));
 \item %[(d)] 
Section \ref{sec-char} establishes the main features both of the $\delta$-NLS ground states and of the action minimizers (Theorem \ref{exchar-gs} --(ii)/Theorem \ref{exchar-actmin}--(ii)).
\end{itemize}

%%%%%%%%%%%%%%%%%%%%%%%%%%%%%%%%%%%%%%%%%%%%%%%%%%%%%%%%%%%%%%%%%%%%%%%%%%%%%%%%
%%%%%%%%%%%%%%%%%%%%%%%%%%%%%%%%%%%%%%%%%%%%%%%%%%%%%%%%%%%%%%%%%%%%%%%%%%%%%%%%
%%%%%%%%%%%%%%%%%%%%%%%%%%%%%%%%%%%%%%%%%%%%%%%%%%%%%%%%%%%%%%%%%%%%%%%%%%%%%%%%
%%%%%%%%%%%%%%%%%%%%%%%%%%%%%%%%%%%%%%%%%%%%%%%%%%%%%%%%%%%%%%%%%%%%%%%%%%%%%%%%
%%%%%%%%%%%%%%%%%%%%%%%%%%%%%%%%%%%%%%%%%%%%%%%%%%%%%%%%%%%%%%%%%%%%%%%%%%%%%%%%
%%%%%%%%%%%%%%%%%%%%%%%%%%%%%%%%%%%%%%%%%%%%%%%%%%%%%%%%%%%%%%%%%%%%%%%%%%%%%%%%

\subsection*{Acknowledgements}

The authors thank Simone Dovetta for helpful discussions and the anonymous referees for suggesting how to improve the paper. In particular, we added Appendix \ref{app-stab} in order to fulfil their advices.

The work was partially supported by the MIUR project “Dipartimenti di Eccellenza 2018-2022” (CUP E11G18000350001) and by the INdAM GNAMPA project 2020 “Modelli differenziali alle derivate parziali per fenomeni di interazione”.

%%%%%%%%%%%%%%%%%%%%%%%%%%%%%%%%%%%%%%%%%%%%%%%%%%%%%%%%%%%%%%%%%%%%%%%%%%%%%%%%
%%%%%%%%%%%%%%%%%%%%%%%%%%%%%%%%%%%%%%%%%%%%%%%%%%%%%%%%%%%%%%%%%%%%%%%%%%%%%%%%
%%%%%%%%%%%%%%%%%%%%%%%%%%%%%%%%%%%%%%%%%%%%%%%%%%%%%%%%%%%%%%%%%%%%%%%%%%%%%%%%
%%%%%%%%%%%%%%%%%%%%%%%%%%%%%%%%%%%%%%%%%%%%%%%%%%%%%%%%%%%%%%%%%%%%%%%%%%%%%%%%
%%%%%%%%%%%%%%%%%%%%%%%%%%%%%%%%%%%%%%%%%%%%%%%%%%%%%%%%%%%%%%%%%%%%%%%%%%%%%%%%
%%%%%%%%%%%%%%%%%%%%%%%%%%%%%%%%%%%%%%%%%%%%%%%%%%%%%%%%%%%%%%%%%%%%%%%%%%%%%%%%

\subsection*{Data availability statement}

Data sharing not applicable to this article as no datasets were generated or analysed during the current study.

%%%%%%%%%%%%%%%%%%%%%%%%%%%%%%%%%%%%%%%%%%%%%%%%%%%%%%%%%%%%%%%%%%%%%%%%%%%%%%%%
%%%%%%%%%%%%%%%%%%%%%%%%%%%%%%%%%%%%%%%%%%%%%%%%%%%%%%%%%%%%%%%%%%%%%%%%%%%%%%%%
%%%%%%%%%%%%%%%%%%%%%%%%%%%%%%%%%%%%%%%%%%%%%%%%%%%%%%%%%%%%%%%%%%%%%%%%%%%%%%%%
%%%%%%%%%%%%%%%%%%%%%%%%%%%%%%%%%%%%%%%%%%%%%%%%%%%%%%%%%%%%%%%%%%%%%%%%%%%%%%%%
%%%%%%%%%%%%%%%%%%%%%%%%%%%%%%%%%%%%%%%%%%%%%%%%%%%%%%%%%%%%%%%%%%%%%%%%%%%%%%%%
%%%%%%%%%%%%%%%%%%%%%%%%%%%%%%%%%%%%%%%%%%%%%%%%%%%%%%%%%%%%%%%%%%%%%%%%%%%%%%%%
%%%%%%%%%%%%%%%%%%%%%%%%%%%%%%%%%%%%%%%%%%%%%%%%%%%%%%%%%%%%%%%%%%%%%%%%%%%%%%%%
%%%%%%%%%%%%%%%%%%%%%%%%%%%%%%%%%%%%%%%%%%%%%%%%%%%%%%%%%%%%%%%%%%%%%%%%%%%%%%%%
%%%%%%%%%%%%%%%%%%%%%%%%%%%%%%%%%%%%%%%%%%%%%%%%%%%%%%%%%%%%%%%%%%%%%%%%%%%%%%%%
%%%%%%%%%%%%%%%%%%%%%%%%%%%%%%%%%%%%%%%%%%%%%%%%%%%%%%%%%%%%%%%%%%%%%%%%%%%%%%%%
%%%%%%%%%%%%%%%%%%%%%%%%%%%%%%%%%%%%%%%%%%%%%%%%%%%%%%%%%%%%%%%%%%%%%%%%%%%%%%%%
%%%%%%%%%%%%%%%%%%%%%%%%%%%%%%%%%%%%%%%%%%%%%%%%%%%%%%%%%%%%%%%%%%%%%%%%%%%%%%%%

%%%%%%%%%%%%%%%%%%%%%%%%%%%%%%%%%%
%%%%%%%%%% Preliminari %%%%%%%%%%%
%%%%%%%%%%%%%%%%%%%%%%%%%%%%%%%%%%

\section{Preliminary results}
\label{sec-prel}

In this section we collect some preliminary results, that will be exploited in the proofs of Theorem \ref{exchar-gs} and Theorem \ref{exchar-actmin}.

%%%%%%%%%%%%%%%%%%%%%%%%%%%%%%%%%%%%%%%%%%%%%%%%%%%%%%%%%%%%%%%%%%%%%%%%%%%%%%%%
%%%%%%%%%%%%%%%%%%%%%%%%%%%%%%%%%%%%%%%%%%%%%%%%%%%%%%%%%%%%%%%%%%%%%%%%%%%%%%%%
%%%%%%%%%%%%%%%%%%%%%%%%%%%%%%%%%%%%%%%%%%%%%%%%%%%%%%%%%%%%%%%%%%%%%%%%%%%%%%%%
%%%%%%%%%%%%%%%%%%%%%%%%%%%%%%%%%%%%%%%%%%%%%%%%%%%%%%%%%%%%%%%%%%%%%%%%%%%%%%%%
%%%%%%%%%%%%%%%%%%%%%%%%%%%%%%%%%%%%%%%%%%%%%%%%%%%%%%%%%%%%%%%%%%%%%%%%%%%%%%%%
%%%%%%%%%%%%%%%%%%%%%%%%%%%%%%%%%%%%%%%%%%%%%%%%%%%%%%%%%%%%%%%%%%%%%%%%%%%%%%%%

\subsection{Further properties of the Green's function}
\label{subsec-green}

First, we recall that $\G_\la\in L^2(\Rd)\setminus H^1(\Rd)$, is positive, radially symmetric,  decreasing along the radial direction, has exponential decay at infinity, and is smooth up to the origin, where it satisfies
\[
 \G_\la(\x)=-\frac{1}{2\pi}\log\left(\frac{\sqrt{\lambda}|\x|}{2}\right)+\frac{\gamma}{2\pi}+o(1),\qquad\text{as}\quad|\x|\to \z
\]
(see \cite[Sec. 9.6]{AS-65} and \cite[Eq. (1.5)]{CCF-17}). Thus $\G_\lambda$ belongs to $L^p(\R^2)$, $2 \leq p < \infty$. Moreover,
\begin{equation}
\label{eq-GLP}
 \|\G_{\la}\|_{2}^{2}=\f{1}{4\pi\la}\qquad\text{and}\qquad \|\G_{\la}\|_{p}^{p}= \f{\|\G_{1}\|_p^p}{\la}.
\end{equation}
On the other hand, using \eqref{eq-green}, one can prove that $\G_{\la}-\G_{\nu}\in H^2(\Rd)$. Also, by direct computation,
\begin{gather}
\label{eq-Glanu}\|\G_{\la}-\G_{\nu}\|_{2}^{2}=\f{1}{4\pi}\left(\f{1}{\la}+\f{1}{\nu}+\f{2\log(\nu/\la)}{\la-\nu}\right),\\
\label{eq-DGlanu}\|\na(\G_{\la}-\G_{\nu})\|_{2}^{2}=\f{1}{4\pi}\left(\f{(\la+\nu)\log(\nu/\la)}{\la-\nu}-2\right).
\end{gather}
Finally, we note that if $\nu<\lambda$, then
\begin{equation}
\label{Gla<Gnu}
\G_\la(\x)=\f{K_0(\sqrt{\la}\x)}{2\pi}=\G_\nu\left(\sqrt{\f{\la}{\nu}}\x\right)<\G_\nu(\x),\qquad\forall\x\in\Rd\setminus\{\z\}.
\end{equation}

%%%%%%%%%%%%%%%%%%%%%%%%%%%%%%%%%%%%%%%%%%%%%%%%%%%%%%%%%%%%%%%%%%%%%%%%%%%%%%%%
%%%%%%%%%%%%%%%%%%%%%%%%%%%%%%%%%%%%%%%%%%%%%%%%%%%%%%%%%%%%%%%%%%%%%%%%%%%%%%%%
%%%%%%%%%%%%%%%%%%%%%%%%%%%%%%%%%%%%%%%%%%%%%%%%%%%%%%%%%%%%%%%%%%%%%%%%%%%%%%%%
%%%%%%%%%%%%%%%%%%%%%%%%%%%%%%%%%%%%%%%%%%%%%%%%%%%%%%%%%%%%%%%%%%%%%%%%%%%%%%%%
%%%%%%%%%%%%%%%%%%%%%%%%%%%%%%%%%%%%%%%%%%%%%%%%%%%%%%%%%%%%%%%%%%%%%%%%%%%%%%%%
%%%%%%%%%%%%%%%%%%%%%%%%%%%%%%%%%%%%%%%%%%%%%%%%%%%%%%%%%%%%%%%%%%%%%%%%%%%%%%%%

\subsection{Extensions of the Gagliardo-Nirenberg inequality}
\label{subsec-GN}

We need a generalization of Gagliardo-Nirenberg inequality to the energy space $D$. 

Let us recall the standard two-dimensional Gagliardo-Nirenberg inequality (\cite[Theorem 1.3.7]{C-CL03}): there exists $C_{p}>0$ such that
\begin{equation}
\label{GNclass}
\|v\|_{p}^{p}\le C_{p}\|\na v\|_{2}^{p-2}\|v\|_{2}^{2}, \qquad\forall\, v \in H^{1}(\Rd).
\end{equation}

First, the set of functions in the energy space with $q \neq 0$ can be written as
\begin{equation}
\label{Sing}
D\setminus H^1(\Rd) =\big\{u\in L^{2}(\Rd):\exists q\in \C\setminus\{0\}\:\text{ s.t. }\:u-q\G_{\f{|q|^{2}}{\|u\|_{2}^{2}}}=:\phi\in H^{1}(\Rd)\big\}.
\end{equation}
In other words, functions in $D\setminus H^1(\Rd)$ admit the decomposition with $\la=\f{|q|^{2}}{\|u\|_{2}^{2}}$, where the right-hand side is well defined as the next remark shows.

\begin{remark}[$H_\al$ and $q$ do not depend on $\la>0$]
\label{qindla}
Let us consider $v\in D(H_\al)$. By definition, there exist $q\in\C$ and $\lambda>0$ such that $v=\phi_\la+q\G_\la$, $\phi_\la\in H^2(\Rd)$ and $\phi_{\la}(0)=(\alpha+\theta_\la)q$. 
Notice that
$$ q : = - 2 \pi \lim_{|\x| \to 0} \frac { v (\x)} {\log |\x|}, $$
therefore $q$ does not depend on $\lambda$. Moreover, choosing $0 < \nu \neq \lambda$, it is possible to decompose the same function as $v = \phi_\nu + q G_\nu$. Since, as mentioned in Section \ref{subsec-green}, $\G_\la-\G_\nu\in H^2(\R^2)$, one gets  that $\phi_\nu:=\phi_\la+q(\G_\la-\G_\nu)$ belongs to $H^2(\R^2)$. Moreover, \eqref{eq-green} also implies
\begin{equation}
 \label{eq-propG2}
 (\G_\la-\G_\nu)(\z)=(4\pi)^{-1}\log(\nu/\la),
\end{equation}
so that $\theta_\la+(\G_\la-\G_\nu)(\z)=\theta_\nu$, whence $\phi_{\nu}(0)=\left(\alpha+\theta_\nu\right)q$. Finally,  by \eqref{eq-green}
\begin{equation}
 \label{eq-propG3}
 -\Delta(\G_\la-\G_\nu)=\nu\G_\nu-\la\G_\la,
\end{equation}
so $-\lap\phi_{\la}-q\la\G_\la=-\lap\phi_{\nu}-q\nu\G_\nu$ and thus the decompositions with $\nu$ and $\la$ are equivalent.\\
\end{remark}

We can now state the following
 \begin{proposition}[Extended Gagliardo-Nirenberg inequalities]
 \label{GNsing}
 For every $p>2$, there exists $K_{p}>0$ such that
 \begin{equation}
 \label{GND}
 \|v\|_p^p\le K_{p}\left(\|\na \phi_\la\|_{2}^{p-2}\|\phi_\la\|_{2}^{2}+\f{|q|^{p}}{\la}\right), \qquad\forall v=\phi_\la+q\G_{\la} \in D,\quad\forall\la>0.
 \end{equation}
 Moreover, there exists $M_{p}>0$
\begin{equation}
\label{GNDs}
\|v\|_{p}^{p}\le M_{p}\left(\|\na \phi\|_{2}^{p-2}+|q|^{p-2}\right)\|v\|_{2}^{2}, \qquad \forall v=\phi+q\G_{\f{|q|^{2}}{\|v\|_{2}^{2}}} \in D\setminus H^1(\Rd).
\end{equation}
\end{proposition}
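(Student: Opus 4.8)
The plan is to derive both inequalities from the classical Gagliardo--Nirenberg inequality \eqref{GNclass} applied to the regular part, combined with the explicit $L^p$ and $L^2$ norms of the Green's function recorded in \eqref{eq-GLP}.

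For \eqref{GND}, I would start from the decomposition $v=\phi_\la+q\G_\la$ and apply Minkowski's inequality to obtain $\|v\|_p\le\|\phi_\la\|_p+|q|\,\|\G_\la\|_p$. Raising to the $p$-th power and using the elementary convexity bound $(a+b)^p\le 2^{p-1}(a^p+b^p)$ (valid since $p>2$) reduces the problem to estimating $\|\phi_\la\|_p^p$ and $\|\G_\la\|_p^p$ separately. Since $\phi_\la\in H^1(\Rd)$, the first term is controlled by \eqref{GNclass}, namely $\|\phi_\la\|_p^p\le C_p\|\na\phi_\la\|_2^{p-2}\|\phi_\la\|_2^2$; the second is handled by the scaling identity $\|\G_\la\|_p^p=\|\G_1\|_p^p/\la$ from \eqref{eq-GLP}, which produces exactly the term $|q|^p/\la$. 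Choosing $K_p=2^{p-1}\max\{C_p,\|\G_1\|_p^p\}$ then yields \eqref{GND}, valid for every $\la>0$.

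For \eqref{GNDs}, the key idea is to specialize \eqref{GND} to the distinguished value $\la=|q|^2/\|v\|_2^2$ afforded by the representation \eqref{Sing}, for which $v=\phi+q\G_\la$ with $\phi=\phi_\la$. With this choice the singular contribution simplifies to $|q|^p/\la=|q|^{p-2}\|v\|_2^2$, which is precisely the second term of \eqref{GNDs}. The one remaining task is to replace $\|\phi\|_2^2$ by $\|v\|_2^2$ in the first term, and this is where the specific choice of $\la$ plays its role. I would bound $\|\phi\|_2$ by the triangle inequality $\|\phi\|_2\le\|v\|_2+|q|\,\|\G_\la\|_2$ and then exploit $\|\G_\la\|_2^2=1/(4\pi\la)$ from \eqref{eq-GLP}: with $\la=|q|^2/\|v\|_2^2$ one finds $|q|\,\|\G_\la\|_2=(4\pi)^{-1/2}\|v\|_2$, whence $\|\phi\|_2\le(1+(4\pi)^{-1/2})\|v\|_2$. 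Substituting this bound into \eqref{GND} and absorbing all numerical constants into a single $M_p$ delivers \eqref{GNDs}.

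The two steps are essentially routine once the right splitting is identified; the only genuinely delicate point is the passage from \eqref{GND} to \eqref{GNDs}, where the factor $\|\phi\|_2^2$ must be traded for $\|v\|_2^2$. This works precisely because the calibrated choice $\la=|q|^2/\|v\|_2^2$ forces the $L^2$ mass of the singular part $q\G_\la$ to be a fixed multiple of $\|v\|_2$, so that the $L^2$ norm of the regular part cannot exceed a universal constant times that of $v$, independently of $q$ and $\phi$. I expect no difficulty with the $H^1$ term, since the classical inequality \eqref{GNclass} applies verbatim to $\phi_\la\in H^1(\Rd)$.
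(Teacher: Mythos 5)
Your proposal is correct and follows essentially the same route as the paper: the elementary convexity bound $2^{p-1}(a^p+b^p)$ together with the classical Gagliardo--Nirenberg inequality on the regular part and the scaling identity \eqref{eq-GLP} for \eqref{GND}, then the calibrated choice $\la=|q|^2/\|v\|_2^2$ plus the triangle inequality and $\|\G_\la\|_2^2=1/(4\pi\la)$ to trade $\|\phi\|_2^2$ for $\|v\|_2^2$ in \eqref{GNDs}. The computation $|q|\,\|\G_{\la_q}\|_2=(4\pi)^{-1/2}\|v\|_2$ is exactly the mechanism the paper uses, so there is nothing to correct.
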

\begin{proof}
If we fix $v=\phi_\la+q\G_\la\in D$, for some $\la>0$, then \eqref{GNclass} and \eqref{eq-GLP} yield
\begin{equation*}
\|v\|_{p}^{p}=\|\phi_{\la}+q\G_{\la}\|_{p}^{p}\le 2^{p-1}\left(\|\phi_{\la}\|_{p}^{p}+|q|^{p}\|\G_{\la}\|_{p}^{p}\right)\le K_{p}\left(\|\na \phi_{\la}\|_{2}^{p-2}\|\phi_{\la}\|_{2}^{2}+\f{|q|^{p}}{\la}\right),
\end{equation*}
that is \eqref{GND}.

If we suppose, in addition, that $q\neq 0$ and set $\la=\la_q:=\f{|q|^{2}}{\|v\|_{2}^{2}}$, then by \eqref{eq-GLP}, \eqref{GND} and the triangle inequality there results
\begin{equation*}
\|v\|_{p}^{p}\le M_{p}\left(\|\na \phi\|_{2}^{p-2}\|v\|_{2}^{2}+\|\na \phi\|_{2}^{p-2}\f{|q|^{2}}{\la_q}+\f{|q|^{p}}{\la_q}\right)\le M_{p}\left(\|\na \phi\|_{2}^{p-2}+|q|^{p-2}\right)\|v\|_{2}^{2}
\end{equation*}
possibly redefining $M_p$, which concludes the proof.
\end{proof}

\begin{remark}
Note that, whenever $q=0$, i.e. $v\in H^1(\Rd)$, \eqref{GND} reduces to \eqref{GNclass}.
\end{remark}

%%%%%%%%%%%%%%%%%%%%%%%%%%%%%%%%%%%%%%%%%%%%%%%%%%%%%%%%%%%%%%%%%%%%%%%%%%%%%%%%
%%%%%%%%%%%%%%%%%%%%%%%%%%%%%%%%%%%%%%%%%%%%%%%%%%%%%%%%%%%%%%%%%%%%%%%%%%%%%%%%
%%%%%%%%%%%%%%%%%%%%%%%%%%%%%%%%%%%%%%%%%%%%%%%%%%%%%%%%%%%%%%%%%%%%%%%%%%%%%%%%
%%%%%%%%%%%%%%%%%%%%%%%%%%%%%%%%%%%%%%%%%%%%%%%%%%%%%%%%%%%%%%%%%%%%%%%%%%%%%%%%
%%%%%%%%%%%%%%%%%%%%%%%%%%%%%%%%%%%%%%%%%%%%%%%%%%%%%%%%%%%%%%%%%%%%%%%%%%%%%%%%
%%%%%%%%%%%%%%%%%%%%%%%%%%%%%%%%%%%%%%%%%%%%%%%%%%%%%%%%%%%%%%%%%%%%%%%%%%%%%%%%

\subsection{A rearrangement inequality in $L^p$-spaces}
\label{subsec-rearr}

Let us start by recalling the definition of \emph{radially symmetric nonincreasing rearrangement} of a function in $\Rd$ and its main features (see e.g. \cite[Chapter 3]{LL-GSM01}). All the definitions and the results in Section \ref{subsec-rearr} are valid in every $\R^d$, with $d\geq2$.

 First, given a measurable $A\subset \Rd$ with finite Lebesgue measure, we denote by $A^*$ the open ball centred at zero with Lebesgue measure equal to $|A|$, that is
\begin{equation*}
A^*:=\{x\in \Rd:\pi|\x|^2<|A|\}.
\end{equation*}
%with $\omega_2$ the Lebesgue measure of the unit ball of $\R^2$. 
Now, let $f:\Rd\to\R$ be a nonnegative measurable function \emph{vanishing at infinity}, i.e. $|\{f>t\}|:=|\{\x\in\Rd:f(\x)>t\}|<+\infty$, for every $t>0$. We call the \emph{radially symmetric nonincreasing rearrangement} of $f$ the function $f^*:\Rd\to\R$ defined by% the function
\begin{equation}
\label{eq-rearr}
f^*(\x)=\int_0^{\infty} \mathds{1}_{\{f>t\}^*}(\x)\dt,
\end{equation}
with $\mathds{1}_{\{f>t\}^*}$ the characteristic function of $\{f>t\}^*$. Definition \eqref{eq-rearr} clearly implies 
\begin{equation}
\label{eq-equim}
 |\{f>t\}|=|\{f^*>t\}|\qquad\text{and}\qquad\{f>t\}^*=\{f^*>t\}
\end{equation}
and 
\begin{equation}
 \label{eq-funset}
 \mathds{1}_A^*\equiv\mathds{1}_{A^*},\qquad\text{for every measurable}\quad A\subset\Rd,\quad|A|<+\infty.
\end{equation}
One can also check that
\begin{equation}
\label{eq-phirearr}
 (\Phi\circ f)^*\equiv\Phi\circ f^*,\qquad\text{for every nondecreasing}\quad\Phi:\R^+\to \R^+
\end{equation}
and that \eqref{eq-equim} yields
\begin{equation}
\label{equimeas}
\|f^*\|_p=\|f\|_p,\qquad\forall f\in L^p(\Rd),\: f\geq0,\quad \forall\, p\geq1.
\end{equation}

Another well known property of rearrangements is the \emph{Hardy-Littlewood inequality}, which states that, given two nonnegative measurable functions $f,g:\Rd\to\R$ vanishing at infinity, there results
\begin{equation}
\label{HL-ineq}
\int_{\Rd}f(\x)g(\x)\dx\le \int_{\Rd}f^*(\x)g^*(\x)\dx
\end{equation} 
and, if $f$ is radially symmetric and decreasing, then the equality holds in \eqref{HL-ineq} if and only if $g=g^*$ a.e. on $\Rd$.

We also need to compare  $\|f+g\|_p$ and $\|f^*+g^*\|_p$ and a related result is stated in the next Proposition. The first statement is actually a special case of a well known result proved in \cite[Theorem 2.2]{AL-89}. The second statement is a refinement of that result and, as far as we know, no proof of it is present in the literature. Our proof adapts the arguments used in \cite[Theorems 3.4 and 3.5]{LL-GSM01}.

\begin{proposition}[Rearrangement inequality]
\label{fgtheor}
For every pair of nonnegative functions $f$, $g\in L^p(\Rd)$, with $p>1$, there results 
\begin{equation}
\label{ineqf+g}
\int_{\Rd} |f+g|^p \dx\le \int_{\Rd} |f^*+g^*|^p\dx.
\end{equation} 
Moreover, if $f$ is radially symmetric and strictly decreasing, then the equality in \eqref{ineqf+g} implies that $g=g^*$ a.e. on $\Rd$.
\end{proposition}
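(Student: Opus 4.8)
The plan is to follow the layer-cake strategy behind \cite[Theorems 3.4 and 3.5]{LL-GSM01}, rewriting $\int(f+g)^p$ in terms of the measures of the \emph{joint} super-level sets of $f$ and $g$, so that both the inequality and its equality case reduce to a single pointwise comparison. Setting $J(a,b):=(a+b)^p$ and using $J(a,b)=a^p+b^p+\int_0^a\!\int_0^b\partial_s\partial_t J(s,t)\dt\ds$ together with Tonelli's theorem (all integrands being nonnegative) and the layer-cake representation, I would first establish
\[
 \int_{\Rd}(f+g)^p\dx=\|f\|_p^p+\|g\|_p^p+\int_0^\infty\!\!\int_0^\infty p(p-1)(s+t)^{p-2}\,\abs{\{f>s\}\cap\{g>t\}}\ds\dt.
\]
The same identity holds for $f^*,g^*$, and since $\|f\|_p=\|f^*\|_p$ and $\|g\|_p=\|g^*\|_p$ by \eqref{equimeas}, the matter reduces to comparing the two double integrals. (The inequality alone would also follow from $L^p$-duality combined with \eqref{HL-ineq}, but it is the layer-cake identity that makes the equality case tractable.)

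Next, for the inequality I would bound the joint super-level measures via the Hardy--Littlewood inequality \eqref{HL-ineq} applied to $\mathds{1}_{\{f>s\}}$ and $\mathds{1}_{\{g>t\}}$. Since $\mathds{1}_{\{f>s\}}^*=\mathds{1}_{\{f^*>s\}}$ by \eqref{eq-funset} and \eqref{eq-equim}, this gives
\[
 \abs{\{f>s\}\cap\{g>t\}}\le\abs{\{f^*>s\}\cap\{g^*>t\}}=\min\big(\abs{\{f>s\}},\abs{\{g>t\}}\big),
\]
the last equality holding because $\{f^*>s\}$ and $\{g^*>t\}$ are balls centered at $\z$. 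As $p>1$ makes the weight $p(p-1)(s+t)^{p-2}$ strictly positive on $(0,\infty)^2$, integrating the pointwise bound against it yields \eqref{ineqf+g}.

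The hard part will be the equality case. Equality in \eqref{ineqf+g} forces the nonnegative integrand $p(p-1)(s+t)^{p-2}\big[\abs{\{f^*>s\}\cap\{g^*>t\}}-\abs{\{f>s\}\cap\{g>t\}}\big]$ to vanish for a.e. $(s,t)$; strict positivity of the weight then gives $\abs{\{f>s\}\cap\{g>t\}}=\min(\abs{\{f>s\}},\abs{\{g>t\}})$ for a.e. $(s,t)$, i.e. the sets $\{f>s\}$ and $\{g>t\}$ are nested up to null sets. At this point I would use that $f$ is radially symmetric and \emph{strictly} decreasing, so that its super-level sets $\{f>s\}$ are exactly the balls centered at $\z$ and exhaust every finite measure. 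Fixing $t$ and letting $m:=\abs{\{g>t\}}$, a centered ball of measure $<m$ cannot contain $\{g>t\}$, so nestedness forces it to be contained in $\{g>t\}$ up to a null set; letting the measure increase to $m$ shows that $\{g>t\}$ equals, up to a null set, the centered ball of measure $m$, that is $\{g>t\}=\{g^*>t\}$. Since this holds for a.e. $t>0$, the layer-cake formula \eqref{eq-rearr} yields $g=g^*$ a.e. The technical care is concentrated precisely in this last passage --- the handling of null sets and the exhaustion of measures --- and it is exactly where the strict monotonicity of $f$, rather than plain monotonicity, is needed.
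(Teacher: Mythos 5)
Your proof is correct, but it takes a genuinely different route from the paper's. The paper uses a one-parameter layer decomposition: with $J_+(t)=|t|^p$ on $t\ge 0$, it writes $|f+g|^p=\int_{-\infty}^{+\infty}J'_+(f(\x)+s)\mathds{1}_{\{g>s\}}(\x)\ds$ (levels of $g$ only), applies the Hardy--Littlewood inequality \eqref{HL-ineq} at each fixed level $s$ to the pair $J'_+(f(\cdot)+s)-J_+'(s)$ and $\mathds{1}_{\{g>s\}}$, and extracts the equality case from the equality condition in Hardy--Littlewood via a continuity argument for the function $t\mapsto\abs{B_{r(t)}(\z)\cap\{g>s\}}$. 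You instead expand in the levels of \emph{both} functions through the joint layer-cake identity
\begin{equation*}
\int_{\Rd}(f+g)^p\dx=\|f\|_p^p+\|g\|_p^p+\int_0^\infty\!\!\int_0^\infty p(p-1)(s+t)^{p-2}\,\abs{\{f>s\}\cap\{g>t\}}\ds\dt,
\end{equation*}
which reduces both the inequality and its equality case to the elementary fact that $\abs{A\cap B}\le\min(\abs{A},\abs{B})$, with equality precisely when the sets are nested up to null sets (so no genuine use of Hardy--Littlewood is needed, since for centered balls the right-hand side is exactly $\abs{A^*\cap B^*}$). Your route is symmetric in $f$ and $g$ and makes the equality analysis more transparent; the paper's route stays closer to \cite[Theorems 3.4 and 3.5]{LL-GSM01} and, as remarked after the proof, only needs convexity of $J$ for the inequality part. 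Both equality arguments ultimately rest on the same geometric input: strict radial monotonicity of $f$ makes every centered ball a super-level set of $f$.

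One detail in your final passage deserves to be spelled out. Nestedness is only known for a.e.\ $(s,t)$, so to exhaust the centered ball of measure $m=\abs{\{g>t\}}$ from inside you must know that the \emph{admissible} levels $s$ (those in the full-measure set) produce balls whose measures densely fill $(0,m)$. Strict monotonicity gives this for two reasons: every centered ball is, up to a null set, a super-level set of $f$; and the distribution function $s\mapsto\abs{\{f>s\}}$ is continuous, because the level sets $\{f=s\}$ are circles, hence Lebesgue-null in $\Rd$. Continuity ensures that removing a null set of levels still leaves measures $\abs{\{f>s\}}$ dense in $(0,\infty)$, after which your limiting argument ($\{f>s_n\}\subset\{g>t\}$ up to null sets with $\abs{\{f>s_n\}}\uparrow m$, hence $\{g>t\}=\{g^*>t\}$ up to null sets, for a.e.\ $t$) closes the proof exactly as you indicate.
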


\begin{proof}
First, we introduce the function
\begin{equation*}
J_+(t):=
\begin{cases}
J(t)\,&\text{if}\quad t\ge 0,\\
0 &\text{if}\quad t<0,
\end{cases}
\qquad\text{with}\qquad J(t):=|t|^p.
\end{equation*}
It is straightforward that $J_+$ is of class $C^1$, with $J'_+$ nonnegative and nondecreasing in $\R$ and, in particular, positive and increasing in $\R^+$. Therefore, 
\begin{equation*}
|f(\x)+g(\x)|^p=J_+(f(\x)+g(\x))=\int_{-f(\x)}^{g(\x)}J'_+(f(\x)+s)\ds=\int_{-\infty}^{+\infty} J'_+(f(\x)+s)\mathds{1}_{\{g>s\}}(\x)\ds,
\end{equation*}
whence, integrating over $\Rd$ and using Tonelli's theorem, we get
\begin{multline}
\label{eq-lpnorm}
\int_{\Rd}|f(\x)+g(\x)|^p\dx\\
=\int_{\Rd}J_+(f(\x)+g(\x))\dx=\int_{-\infty}^{+\infty}\left(\int_{\Rd} J'_+(f(\x)+s)\mathds{1}_{\{g>s\}}(\x)\dx\right)\ds\\
=\underbrace{\int_0^{+\infty}\int_{\R^2}J'_+(f(\x)-s)\dx\ds}_{=:I_1}+\underbrace{\int_0^{+\infty}\int_{\R^2}J'_+(f(\x)+s)\mathds{1}_{\{g>s\}}(\x)\dx\ds}_{=:I_2}
\end{multline}
where we used the fact that $\mathds{1}_{\{g>-s\}}\equiv1$ for every $s>0$. Now, combining \eqref{eq-equim} and \eqref{eq-phirearr} with $\Phi(\cdot)=J_+'(\cdot-s)$, one sees that
\begin{equation}
\label{eq-I1}
 I_1=\int_0^{+\infty}\int_{\R^2}J'_+(f^*(\x)-s)\dx\ds.
\end{equation}
On the other hand, combining \eqref{eq-equim}, \eqref{eq-funset}, \eqref{eq-phirearr} with $\Phi(\cdot)=J_+'(\cdot+s)-J_+'(s)$ and \eqref{HL-ineq}, one sees that
\begin{align}
\label{eq-hardy}
 \int_{\R^2}J'_+(f(\x)+s)\mathds{1}_{\{g>s\}}(\x)\dx & \nonumber\\
 & \hspace{-3cm} =\int_{\R^2}\big(J'_+(f(\x)+s)-J_+'(s)\big)\mathds{1}_{\{g>s\}}(\x)\dx+J_+'(s)|\{g>s\}|\nonumber\\
 & \hspace{-3cm} \leq \int_{\R^2}\big(J'_+(f(\x)+s)-J_+'(s)\big)^*\mathds{1}_{\{g>s\}}^*(\x)\dx+J_+'(s)|\{g^*>s\}|\nonumber\\
 & \hspace{-3cm} = \int_{\R^2}\big(J'_+(f^*(\x)+s)-J_+'(s)\big)\mathds{1}_{\{g^*>s\}}(\x)\dx+J_+'(s)|\{g^*>s\}|\nonumber\\
 & \hspace{-3cm} =\int_{\Rd}J_+'(f^*+(\x)+s)\mathds{1}_{\{g^*>s\}}(\x)\dx,
\end{align}
so that
\begin{equation}
\label{eq-I2}
 I_2\leq\int_0^{+\infty}\int_{\Rd}J_+'(f^*+(\x)+s)\mathds{1}_{\{g^*>s\}}(\x)\dx\ds.
\end{equation}
Hence, in view of \eqref{eq-lpnorm}, \eqref{eq-I1} and \eqref{eq-I2} one easily finds that \eqref{ineqf+g} is satisfied.

It is left to prove that, if $f$ is radially symmetric decreasing and the equality is fulfilled in \eqref{ineqf+g}, then $g=g^*$ a.e. on $\Rd$. To this aim, fix $f$ radially symmetric and decreasing and assume that the equality in \eqref{ineqf+g} holds. Then, one can check that $f=f^*$ a.e. on $\Rd$ and that, by \eqref{eq-hardy},
\begin{equation}
\label{intRdequal}
\int_{\Rd} J'_+(f(\x)+s)\mathds{1}_{\{g>s\}}(\x)\dx=\int_{\Rd} J'_+(f(\x)+s)\mathds{1}_{\{g^*> s\}}(\x)\dx,\qquad\text{for a.e.}\quad s\geq0.
\end{equation}
Since $J'_+$ is increasing on $\R^+$ and  and $f$ is radially symmetric decreasing, $J'_+(f(\cdot)+s)$ is radially symmetric decreasing too. Hence there exists a continuous bijection $r:\R^+\to\R^+$ such that $\{\x\in\Rd:J'_+(f(\x)+s)-J_+'(s)>t\}=B_{r(t)}(\z)$, namely the centered ball of radius $r(t)$. In addition, by dominated converge, the function
\begin{equation*}
F_C(t):=\int_{\Rd}\mathds{1}_{B_{r(t)}(\z)}(\x)\mathds{1}_C(\x)\dx=|B_{r(t)}(\z)\cap C|
\end{equation*}
is continuous on $\R^+$ for any measurable $C\subset\Rd$ fixed. 
 
Now, fix $s>0$ such that \eqref{intRdequal} holds and set $C=\{\x\in\Rd:g(\x)>s\}$. Arguing as before, one can find that $F_C(t)\le F_{C^*}(t)$. From \eqref{eq-hardy} and \eqref{intRdequal}, using the layer-cake representation, one obtains that $\int_0^{\infty} F_C(t)\dt=\int_0^{\infty} F_{C^*}(t)\dt$ and, hence, $F_C(t)=F_C^*(t)$ for every $t>0$. As $C^*$ is a centered ball too, this implies that for every $r>0$ either $C,C^*\subset B_{r}(\z)$ or $C,C^*\supset B_{r}(\z)$ up to sets of zero Lebesgue measure, so that $C=C^*$. Finally, as this is valid for every $s>0$, using again \eqref{eq-equim} and the layer-cake representation, there results that $g=g^*$ a.e. on $\Rd$.
\end{proof}

\begin{remark}
 Up to some minor modifications, in order to prove the first part of Proposition \ref{fgtheor} it suffices the simple convexity of $J$, the strict convexity being necessary for the sole second part. Hence, \eqref{ineqf+g} holds also for $p=1$.
\end{remark}

Before concluding the section, we also mention another well known result on rearrangements that will be used in the sequel: if $f\in H^1(\Rd)$, then $f^*\in H^1(\Rd)$ and in particular
\begin{equation}
\label{PS}
\|\na f^*\|_2\le \|\na f\|_2.
\end{equation}
Equation \eqref{PS} is usually called the \emph{P\'olya-Szeg\H{o} inequality}.

%%%%%%%%%%%%%%%%%%%%%%%%%%%%%%%%%%%%%%%%%%%%%%%%%%%%%%%%%%%%%%%%%%%%%%%%%%%%%%%%
%%%%%%%%%%%%%%%%%%%%%%%%%%%%%%%%%%%%%%%%%%%%%%%%%%%%%%%%%%%%%%%%%%%%%%%%%%%%%%%%
%%%%%%%%%%%%%%%%%%%%%%%%%%%%%%%%%%%%%%%%%%%%%%%%%%%%%%%%%%%%%%%%%%%%%%%%%%%%%%%%
%%%%%%%%%%%%%%%%%%%%%%%%%%%%%%%%%%%%%%%%%%%%%%%%%%%%%%%%%%%%%%%%%%%%%%%%%%%%%%%%
%%%%%%%%%%%%%%%%%%%%%%%%%%%%%%%%%%%%%%%%%%%%%%%%%%%%%%%%%%%%%%%%%%%%%%%%%%%%%%%%
%%%%%%%%%%%%%%%%%%%%%%%%%%%%%%%%%%%%%%%%%%%%%%%%%%%%%%%%%%%%%%%%%%%%%%%%%%%%%%%%
%%%%%%%%%%%%%%%%%%%%%%%%%%%%%%%%%%%%%%%%%%%%%%%%%%%%%%%%%%%%%%%%%%%%%%%%%%%%%%%%
%%%%%%%%%%%%%%%%%%%%%%%%%%%%%%%%%%%%%%%%%%%%%%%%%%%%%%%%%%%%%%%%%%%%%%%%%%%%%%%%
%%%%%%%%%%%%%%%%%%%%%%%%%%%%%%%%%%%%%%%%%%%%%%%%%%%%%%%%%%%%%%%%%%%%%%%%%%%%%%%%
%%%%%%%%%%%%%%%%%%%%%%%%%%%%%%%%%%%%%%%%%%%%%%%%%%%%%%%%%%%%%%%%%%%%%%%%%%%%%%%%
%%%%%%%%%%%%%%%%%%%%%%%%%%%%%%%%%%%%%%%%%%%%%%%%%%%%%%%%%%%%%%%%%%%%%%%%%%%%%%%%
%%%%%%%%%%%%%%%%%%%%%%%%%%%%%%%%%%%%%%%%%%%%%%%%%%%%%%%%%%%%%%%%%%%%%%%%%%%%%%%%

%%%%%%%%%%%%%%%%%%%%%%%%%%%%%%%%%%
%%%%%%%%% Ground states %%%%%%%%%%
%%%%%%%%%%%%%%%%%%%%%%%%%%%%%%%%%%

\section{Ground states existence: proof of Theorem \ref{exchar-gs}--(i)}
\label{sec-gs}

In this section, we prove point $(i)$ of Theorem \ref{exchar-gs}, that is the existence of $\delta$-NLS ground states of mass $\mu$ for every $\mu>0$.

To this aim, the first step is to establish boundedness from below of $E_{|_{D_\mu}}$ in the $L^2(\Rd)$-subcritical case. Preliminarily, it is convenient to write the functional $E$  as
\begin{multline}
\label{Epala}
E(u)=\\[.0cm]
\left\{
\begin{array}{ll}
\displaystyle \f{1}{2}\|\na\phi\|^2_{2}+\f{|q|^{2}\|\phi\|^2_{2}}{2\|u\|_{2}^{2}}+\bigg(\alpha-1+\f{\log\left(\f{|q|}{2\|u\|_{2}}\right)+\ga}{2\pi}\bigg)\f{|q|^2}{2}-\f{\|u\|^{p}_{p}}{p}, &\text{if } u\in  D\setminus H^1(\Rd),\\[.6cm]
\displaystyle \f{1}{2}\|\na u\|^2_{2}-\f{1}{p}\|u\|^{p}_{p}, &\text{if } u \in H^{1}(\Rd),
\end{array}
\right.
\end{multline}
where we use the decomposition  $u=\phi+q\G_{\f{|q|^2}{\|u\|_2^2}}$ introduced in \eqref{Sing}, for every $u\in D\setminus H^1(\Rd)$.

\begin{proposition}
\label{Epabound}
Let $p\in(2,4)$ and $\alpha\in\R$. Then $\Eps(\mu)>-\infty$, for every $\mu>0$.
\end{proposition}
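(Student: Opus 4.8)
The plan is to exploit the explicit rewriting \eqref{Epala} of the energy and to treat separately the two regimes $v\in H^1(\Rd)$ (no charge) and $v\in D\setminus H^1(\Rd)$ (nonzero charge), bounding $E$ below by a constant depending only on $\mu$, $p$, $\alpha$ in each case. Throughout I would fix $v\in D_\mu$, so that $\|v\|_2^2=\mu$ is constant, and reduce the problem to controlling the focusing term $-\frac1p\|v\|_p^p$ by means of the extended Gagliardo--Nirenberg inequalities of Proposition \ref{GNsing}, whose subcritical scaling (governed by the exponent $p-2\in(0,2)$ when $p\in(2,4)$) is exactly what prevents the energy from diverging to $-\infty$.

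In the regime $v\in H^1(\Rd)$ the energy coincides with $E^0(v)=\frac12\|\na v\|_2^2-\frac1p\|v\|_p^p$, and the classical inequality \eqref{GNclass} gives $\|v\|_p^p\le C_p\mu\|\na v\|_2^{p-2}$; hence $E(v)\ge \frac12 t^2-\frac{C_p\mu}{p}t^{p-2}$ with $t:=\|\na v\|_2\ge0$, and since $p-2<2$ the right-hand side is a function of $t$ that is bounded below on $[0,\infty)$. In the regime $v=\phi+q\G_{|q|^2/\|v\|_2^2}\in D\setminus H^1(\Rd)$ I would start from \eqref{Epala}, discard the nonnegative term $\frac{|q|^2\|\phi\|_2^2}{2\mu}$, and estimate $\|v\|_p^p$ through \eqref{GNDs}, namely $\|v\|_p^p\le M_p\mu\big(\|\na\phi\|_2^{p-2}+|q|^{p-2}\big)$. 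Writing $t:=\|\na\phi\|_2\ge0$ this yields
\[
E(v)\ge \underbrace{\Big(\tfrac12 t^2-\tfrac{M_p\mu}{p}t^{p-2}\Big)}_{=:A(t)}+\underbrace{\Big(\tfrac{|q|^2}{2}\Big(\alpha-1+\tfrac{\log(|q|/(2\sqrt\mu))+\ga}{2\pi}\Big)-\tfrac{M_p\mu}{p}|q|^{p-2}\Big)}_{=:B(|q|)},
\]
and it remains to show that both $A$ and $B$ are bounded below.

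The term $A(t)$ is bounded below exactly as in the $H^1$ case, using $p-2<2$. The delicate term is $B$, because of the logarithmic factor: I would observe that as $|q|\to+\infty$ the leading behaviour of $B(|q|)$ is $\frac{|q|^2\log|q|}{4\pi}\to+\infty$, which dominates the negative contribution $-\frac{M_p\mu}{p}|q|^{p-2}$ since $p-2<2$; while as $|q|\to0^+$ one has $|q|^2\log|q|\to0$ and $|q|^{p-2}\to0$ (as $p>2$), so $B(|q|)\to0$. Being continuous on $(0,\infty)$ with finite limits at both ends, $B$ is bounded below, and combining with the bound on $A$ gives a lower bound for $E$ on $D_\mu$ independent of $v$, whence $\Eps(\mu)>-\infty$. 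The only real obstacle is the sign-changing log term in $B$: the point to make carefully is that its negativity for small charge is harmless because $|q|^2\log|q|$ vanishes at $|q|=0$, so no subtle cancellation against the Gagliardo--Nirenberg remainder is needed, and the subcriticality $p<4$ enters solely through the inequalities $p-2<2$ used to control both $A$ and the large-charge behaviour of $B$.
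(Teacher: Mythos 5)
Your proposal is correct and follows essentially the same route as the paper: the same splitting into the chargeless case ($v\in H^1(\Rd)$, handled by the classical Gagliardo--Nirenberg inequality \eqref{GNclass}) and the charged case (handled via the decomposition \eqref{Sing} with $\la=|q|^2/\mu$, the rewriting \eqref{Epala}, and the extended inequality \eqref{GNDs}), with subcriticality $p<4$ controlling the gradient term and the $|q|^2\log|q|$ term dominating for large charge. The only difference is that you spell out explicitly the boundedness-from-below of the charge contribution $B(|q|)$ (finite limits at $0^+$ and $+\infty$, continuity in between), which the paper leaves as a one-line remark.
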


\begin{proof}
Let $u\in D_\mu$. Assume, first, that $u\in H_\mu^{1}(\Rd)$. Therefore, \eqref{GNclass} entails
\begin{equation*}
E(u)\ge \f{1}{2}\|\na u\|^2_{2}-\f{C_{p}}{p}\|\na u\|^{p-2}_{2}\mu,
\end{equation*}
so that $E$ is bounded from below on $H_\mu^{1}(\Rd)$ since $2<p<4$.

Then, assume that $u\in D_\mu\setminus H_\mu^1(\Rd)$. By \eqref{GNDs}
\begin{multline}
\label{Eun-prel}
E(u)\ge \left (\f{1}{2}\|\na\phi\|^2_{2}-\f{M_{p}}{p}\|\na\phi\|^{p-2}_{2}\mu\right)+\f{|q|^{2}\|\phi\|^2_{2}}{2\mu}\\
+\left[\left(\alpha-1+\f{\log\left(\f{|q|}{2\sqrt{\mu}}\right)+\ga}{2\pi}\right)\f{|q|^2}{2}-\f{M_{p}}{p}|q|^{p-2}\mu\right],
\end{multline}
and here again $E$ is bounded from below in $D_\mu\setminus H_\mu^1(\Rd)$ since $2<p<4$ (note that the $\log(|q|)|q|^2$ term balances the negatively diverging $|q|^2$ term). Summing up, $E$ is lower bounded on the whole $D_\mu$.
\end{proof}

Further than boundedness from below, it is also useful to establish a comparison between the $\delta$-NLS energy infimum and the NLS energy infimum.

\begin{proposition}
\label{Eps<Eps0}
Let $p\in(2,4)$ and $\alpha\in\R$. Then, 
\begin{equation}
\label{eq-levcomp}
\Eps(\mu)<\Eps^{0}(\mu)<0,\quad\forall \mu>0.
\end{equation} 
\end{proposition}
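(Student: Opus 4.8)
The plan is to prove the two inequalities separately, the right-hand one being classical and the left-hand one being the heart of the matter. For $\Eps^0(\mu)<0$ I would fix any $w\in H^1_\mu(\Rd)\setminus\{0\}$ and use the mass-preserving dilation $w_\si(\x):=\si w(\si\x)$, for which $\|w_\si\|_2=\|w\|_2$, $\|\na w_\si\|_2^2=\si^2\|\na w\|_2^2$ and $\|w_\si\|_p^p=\si^{p-2}\|w\|_p^p$; since $p-2<2$, the gradient term is negligible with respect to the (negative) nonlinear term as $\si\to0^+$, so $E^0(w_\si)<0$ for $\si$ small, whence $\Eps^0(\mu)<0$. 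Moreover, since every $v\in H^1_\mu(\Rd)$ has zero charge and therefore $E(v)=E^0(v)$ by Remark \ref{extension}, the inclusion $H^1_\mu(\Rd)\subset D_\mu$ gives immediately $\Eps(\mu)\le\Eps^0(\mu)$. It remains to upgrade this to a strict inequality.

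To this end I would exhibit a single competitor in $D_\mu$ carrying a nonzero charge whose energy lies strictly below $\Eps^0(\mu)$, obtained by perturbing the NLS ground state with a small singular part. Let $z$ be a minimizer of $E^0$ on $H^1_\mu(\Rd)$ (which exists and may be taken positive, cf.\ the proof of Proposition \ref{exstanden}); it solves $-\lap z+\omega_0 z=z^{p-1}$ for a Lagrange multiplier $\omega_0$, and testing this identity with $z$ gives $\|z\|_p^p-\|\na z\|_2^2=\omega_0\mu$ (so in particular $\omega_0>0$, since $E^0(z)<0$ forces $\|z\|_p^p>\tf{p}{2}\|\na z\|_2^2$). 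Fix $\la>\omega_0$ and, for $\ep>0$ small, set $u_\ep:=z+\ep\G_\la$, whose decomposition at level $\la$ has regular part $z\in H^1(\Rd)$ and charge $q=\ep$. Since $u_\ep$ has mass $\mu+2\ep\scal{z}{\G_\la}+O(\ep^2)\neq\mu$, I would restore the constraint by rescaling, setting $\tilde u_\ep:=\sqrt{\mu}\,u_\ep/\|u_\ep\|_2\in D_\mu$.

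The core computation is the expansion of $\ep\mapsto E(\tilde u_\ep)$, which for fixed $\la$ is differentiable at $\ep=0$ with $E(\tilde u_0)=E^0(z)=\Eps^0(\mu)$. Using \eqref{E} together with $\|\tilde u_\ep\|_2^2=\mu$, the expansion of $\|u_\ep\|_2^2$, and the first-order expansion $\|u_\ep\|_p^p=\|z\|_p^p+\ep\,p\int_{\Rd}z^{p-1}\G_\la\dx+o(\ep)$ (legitimate since $z,\G_\la\ge0$ and $z^{p-1}\G_\la\in L^1(\Rd)$ by H\"older), I would collect the first-order term and, after substituting $\|z\|_p^p-\|\na z\|_2^2=\omega_0\mu$, obtain
\begin{equation*}
E(\tilde u_\ep)=\Eps^0(\mu)+\ep\left[\scal{z}{\G_\la}(\omega_0-\la)-\int_{\Rd}z^{p-1}\G_\la\dx\right]+o(\ep).
\end{equation*}
Both $\scal{z}{\G_\la}$ and $\int_{\Rd}z^{p-1}\G_\la\dx$ are strictly positive because $z$ and $\G_\la$ are positive, and since $\la>\omega_0$ the bracket is strictly negative. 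Hence $E(\tilde u_\ep)<\Eps^0(\mu)$ for $\ep>0$ small, and as $\tilde u_\ep\in D_\mu$ this yields $\Eps(\mu)\le E(\tilde u_\ep)<\Eps^0(\mu)$, completing the proof.

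I expect the main obstacle to be the bookkeeping in the first-order expansion: the rescaling factor $\sqrt{\mu}/\|u_\ep\|_2$ contributes $O(\ep)$ corrections to every quadratic term and to the $L^p$ term, and one must check that, once the Euler--Lagrange identity for $z$ is used, these corrections organize into the clean coefficient above. Choosing $\la$ strictly larger than the NLS frequency $\omega_0$ is exactly what guarantees the sign; alternatively one could keep $\la$ as a free large parameter and use $\la\scal{z}{\G_\la}\to z(\z)>0$ as $\la\to\infty$ (since $\la\G_\la\rightharpoonup\delta_0$ with unit mass) to reach the same conclusion without invoking the multiplier.
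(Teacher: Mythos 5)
Your proof is correct, and for the crucial strict inequality $\Eps(\mu)<\Eps^{0}(\mu)$ it takes a genuinely different route from the paper. (The right inequality $\Eps^0(\mu)<0$ via the dilation $w_\si$ is exactly the paper's argument.) The paper argues indirectly: it takes the positive NLS ground state $u\in H^1_\mu(\Rd)$ and observes that, if $u$ attained $\Eps(\mu)$, it would be a $\delta$-NLS ground state and hence (Appendix \ref{app-gbstates}) would satisfy the bound-state condition \eqref{eq-regbound}; since $u\in H^1(\Rd)$ forces $q=0$, the boundary condition $\phi_\la(\z)=(\alpha+\theta_\la)q$ would give $u(\z)=0$, contradicting positivity, so $\Eps(\mu)<E(u)=\Eps^0(\mu)$. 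You instead build an explicit competitor: charge the soliton with a small singular part $\ep\G_\la$, renormalize the mass, and expand the energy to first order in $\ep$, the coefficient $\scal{z}{\G_\la}(\omega_{S}-\la)-\int_{\Rd}z^{p-1}\G_\la\dx$ being strictly negative once $\la$ exceeds the soliton's frequency (I have checked your bookkeeping: the rescaling contributes $-\tf{\ep a}{\mu}\|\na z\|_2^2+\tf{\ep a}{\mu}\|z\|_p^p$ and the $\la$-term contributes $-\la\ep a$, with $a=\scal{z}{\G_\la}$, which indeed collapses to your bracket after the Euler--Lagrange substitution). Both arguments are sound; the paper's is shorter but leans on the nontrivial fact that constrained minimizers in $D$ satisfy the full Euler--Lagrange system including the boundary condition, whereas yours stays at the level of the functional, needs only the classical NLS multiplier identity (or, as you note, none at all, using $\la\scal{z}{\G_\la}\to z(\z)$ as $\la\to\infty$), and moreover quantifies the $O(\ep)$ energy gain from switching on a charge, which is the same mechanism behind Lemma \ref{q>C}. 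Two cosmetic repairs: rename your Lagrange multiplier (say $\omega_S$), since $\omega_0$ already denotes $-\ell_\alpha$ in the paper; and the expansion $\|z+\ep\G_\la\|_p^p=\|z\|_p^p+\ep p\int_{\Rd}z^{p-1}\G_\la\dx+o(\ep)$ needs slightly more than $z^{p-1}\G_\la\in L^1(\Rd)$ --- use the convexity bounds $p z^{p-1}\ep\G_\la\le(z+\ep\G_\la)^p-z^p\le p(z+\ep\G_\la)^{p-1}\ep\G_\la$ and dominate by $p(z+\G_\la)^{p-1}\G_\la\in L^1(\Rd)$, which follows from H\"older together with $\G_\la\in L^p(\Rd)$. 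Neither point affects correctness.
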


In order to prove this, we preliminarily recall without proof a well known result about NLS ground states (see \cite[Theorem II.5]{L-ANIHPC84} for a proof of the existence part, while the proof of the properties satisfied by the ground states is a consequence of \cite[Theorem 2]{GNN-81}).

\begin{proposition}
\label{exstanden}
Let $p\in(2,4)$ and $\mu>0$. Then, there exists a NLS ground state of mass $\mu$, i.e. $u\in H^1_{\mu}(\Rd)$ such that $E^0(u)=\Eps^0(\mu)$. Moreover, such minimizer is unique, positive and radially symmetric decreasing, up to multiplication by a constant phase and translation.
\end{proposition}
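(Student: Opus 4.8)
The plan is to obtain existence by the direct method combined with symmetrization, then to read off the qualitative properties from the Euler--Lagrange equation and the equality cases of the rearrangement inequalities, and finally to settle uniqueness by reducing to an ODE problem. First I would record that $E^0$ is coercive and bounded below on $H^1_\mu(\Rd)$: by \eqref{GNclass} one has $\|v\|_p^p\le C_p\mu\|\na v\|_2^{p-2}$, so
\[
E^0(v)\ge \f{1}{2}\|\na v\|_2^2-\f{C_p}{p}\mu\|\na v\|_2^{p-2},
\]
and since $p-2<2$ the right-hand side is bounded below and diverges as $\|\na v\|_2\to\infty$; hence any minimizing sequence $(v_n)$ is bounded in $H^1(\Rd)$. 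Replacing $v_n$ by $|v_n|^*$, the $L^2$- and $L^p$-norms are preserved by \eqref{equimeas} while the gradient does not increase by \eqref{PS}, so I may assume each $v_n$ is nonnegative, radial and nonincreasing and still minimizing. The payoff is compactness: $H^1_{\mathrm{rad}}(\Rd)\hookrightarrow L^p(\Rd)$ compactly for $p>2$, so along a subsequence $v_n\rightharpoonup u$ in $H^1(\Rd)$ and $v_n\to u$ in $L^p(\Rd)$.

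To promote $u$ to a genuine minimizer I must exclude loss of mass. A scaling computation with $v(\x)=a\,u_0(b\x)$ gives $\Eps^0(\mu)=\Eps^0(1)\,\mu^{2/(4-p)}$, and the exponent $2/(4-p)$ exceeds $1$ precisely because $p\in(2,4)$; combined with $\Eps^0(1)<0$ (Proposition \ref{Eps<Eps0}) this makes $\mu\mapsto\Eps^0(\mu)$ strictly decreasing. Setting $m:=\|u\|_2^2\le\mu$ by weak lower semicontinuity, strong $L^p$-convergence yields $E^0(u)\le\liminf E^0(v_n)=\Eps^0(\mu)$; if $m=0$ then $\|u\|_p=0$ and $E^0(v_n)\to\tfrac12\lim\|\na v_n\|_2^2\ge0>\Eps^0(\mu)$, whereas if $0<m<\mu$ then $E^0(u)\ge\Eps^0(m)>\Eps^0(\mu)$, and both alternatives are absurd. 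Hence $m=\mu$, $u$ attains $\Eps^0(\mu)$, and norm convergence upgrades the weak convergence to strong convergence in $H^1(\Rd)$.

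For the qualitative properties, the Lagrange multiplier theorem gives $-\lap u+\omega u=|u|^{p-2}u$ for some $\omega\in\R$; testing against $u$ and using $E^0(u)<0$ forces $\omega>0$, and elliptic regularity gives $u\in C^2$. Since $u$ is nonnegative and nontrivial, the strong maximum principle yields $u>0$. For a general minimizer $v$, the function $|v|$ is again a minimizer, so equality must hold in \eqref{PS} for $|v|$; the equality case forces $|v|$ to be a translate of a radial strictly decreasing function, while $|\na v|=|\na|v||$ forces $v=e^{i\theta}|v|$ with constant phase. Thus every minimizer is, up to translation and phase, positive, radial and strictly decreasing. (Radial symmetry and monotonicity of the positive solution also follow from the moving-plane method of \cite{GNN-79}.)

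Uniqueness is the genuine obstacle. Up to translation and phase, a minimizer is a positive radial solution of $-\lap u+\omega u=u^{p-1}$ with $\omega>0$, and via the scaling $u(\x)=\omega^{1/(p-2)}U(\sqrt\omega\,\x)$ each such solution is determined by the unique positive radial $H^1$-solution $U$ of $-\lap U+U=U^{p-1}$, whose uniqueness is the classical ODE-uniqueness theorem of Kwong. Since $\|u\|_2^2=\omega^{(4-p)/(p-2)}\|U\|_2^2$ is strictly increasing in $\omega$, the constraint $\|u\|_2^2=\mu$ selects a single $\omega$, hence a single profile up to translation and phase. The whole depth of the proposition sits in this final step: existence and symmetry are soft variational facts obtained from rearrangement and compactness, whereas uniqueness rests entirely on the sharp uniqueness theory for positive radial solutions of the limiting semilinear equation.
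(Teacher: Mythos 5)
Your proposal is correct in substance, but note that the paper does not actually prove Proposition \ref{exstanden}: it is explicitly recalled \emph{without proof}, with existence attributed to \cite{L-ANIHPC84} (concentration--compactness) and the qualitative properties to \cite{GNN-79} (moving planes). Your route is genuinely different from that cited path, and in fact more self-contained: you obtain existence by symmetrizing the minimizing sequence and invoking the compact embedding of radial $H^1(\Rd)$ functions into $L^p(\Rd)$, $p>2$, which replaces the concentration--compactness machinery; the scaling identity $\Eps^0(\mu)=\mu^{2/(4-p)}\Eps^0(1)$ (which I checked, together with the negativity of $\Eps^0$ via the paper's own scaling argument, so no circularity with Proposition \ref{Eps<Eps0}) correctly rules out vanishing and dichotomy. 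Your symmetry argument via the equality cases of \eqref{PS} and of the diamagnetic inequality replaces the moving-plane method, and your uniqueness step correctly isolates what the paper's citation glosses over: \cite{GNN-79} only reduces matters to radial profiles, and uniqueness genuinely requires Kwong's ODE theorem for $-\lap U+U=U^{p-1}$ together with the strict monotonicity of $\omega\mapsto\omega^{(4-p)/(p-2)}\|U\|_2^2$, both of which you verify correctly (I checked the exponents).

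Two small points deserve more care than your sketch gives them. First, the equality case of the P\'olya--Szeg\H{o} inequality does \emph{not} by itself force $|v|$ to be a translate of its rearrangement: one needs the Brothers--Ziemer condition that the set where $\na |v|^*$ vanishes inside $\{0<|v|^*<\max |v|^*\}$ is Lebesgue-null. In your setting this can be checked, since $|v|^*$ is itself a minimizer, hence a positive $C^2$ radial solution of the Euler--Lagrange equation, and the radial ODE shows its derivative vanishes only at the origin; but the step should be stated. Second, for $p\in(2,3)$ the nonlinearity $|u|^{p-2}u$ is only H\"older continuous, so ``elliptic regularity gives $u\in C^2$'' should be run as a bootstrap to $C^{1,\alpha}$ followed by Schauder estimates; the conclusion is unaffected. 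Neither point is a genuine gap in the strategy, only in the level of detail.
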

The positive minimizer of the two-dimensional standard NLS functional at mass $\mu$ is usually called two-dimensional {\em soliton} 
and in the following it will be denoted by $S_\mu$.

\begin{proof}[Proof of Proposition \ref{Eps<Eps0}]
Fix $\mu>0$ and let $S_\mu$ be the unique NLS ground state of mass $\mu$ mentioned in Proposition \ref{exstanden}. First, note that, as $S_\mu$ is positive, it cannot be a $\delta$-NLS ground state of mass $\mu$. Indeed, if $S_\mu$ is a $\delta$-NLS ground state, then $S_\mu$ has to satisfy \eqref{eq-regbound} and, in particular, $\phi_\la(\z)=(\alpha+\theta_\la)q$. However, as mentioned in Section \ref{subsec-GN}, $S_\mu\in H^1(\Rd)$ implies $q=0$, so that $S_\mu\equiv\phi_\la$ and $\phi_\la(\z)=0$. Hence, $S_\mu(\z)=0$, which contradicts its positivity. Summing up, $S_\mu$ is not a $\delta$-NLS ground state at mass $\mu$ and, thus, there exists $v\in D_\mu$ such that $E(v)<E(S_\mu)=\Eps^{0}(\mu)$, which proves the left inequality in \eqref{eq-levcomp}.

Concerning the right inequality, fix again $\mu>0$, and consider $v\in H^{1}_{\mu}(\Rd)$. Now, using the mass-preserving transformation
 \begin{equation*} 
v_{\sigma}(x)=\sigma v(\sigma x),
\end{equation*}
there results
\begin{equation*}
E^0(v_{\sigma})=\f{\sigma^{2}}{2}\|\na v\|_{2}^{2}-\f{\sigma^{p-2}}{p}\|v\|_{p}^{p}.
\end{equation*}
However, as $p\in(2,4)$, this immediately entails that $\Eps^{0}(\mu)\le E^{0}(v_{\sigma})<0$, for every $\sigma\ll1$, which completes the proof.
\end{proof}

The second step of the proof of point (i) in Theorem \ref{exchar-gs} consists in a characterization of the \emph{$\delta$-NLS energy minimizing sequences of mass $\mu$}, i.e. sequences
\[
 (u_n)_n\subset D_\mu\qquad\text{such that}\qquad E(u_n)\to\Eps(\mu),\quad\text{as}\:n\to+\infty.
\]
This is provided by the next two lemmas.

\begin{lemma} \label{q>C}
\label{minimseq1}
Let $p\in(2,4)$, $\alpha\in\R$ and $\mu>0$. If $u_n = \phi_{\lambda,n} + q_n \G_\lambda$ is a minimizing sequence  
for the $\delta$-NLS energy, then there exists $\bar{n}\in \N$ and a constant $C > 0$, such that $|q_n| > C$ for every $n\ge \bar{n}$.
\end{lemma}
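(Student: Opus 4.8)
The plan is to argue by contradiction, the whole point being to cash in the \emph{strict} inequality $\Eps(\mu)<\Eps^0(\mu)$ established in Proposition \ref{Eps<Eps0}. Suppose that, along a subsequence (not relabeled), $q_n\to0$. Since the charge is intrinsic, I read the decomposition at a \emph{fixed} $\lambda>0$, so $u_n=\phi_{\lambda,n}+q_n\G_\lambda$ with $\|u_n\|_2^2=\mu$. Because $\phi_{\lambda,n}=u_n-q_n\G_\lambda$ and $\G_\lambda\in L^2(\Rd)\cap L^p(\Rd)$ by \eqref{eq-GLP} and Section \ref{subsec-green}, the triangle inequality gives $\|\phi_{\lambda,n}\|_2\to\sqrt\mu$ and, in $L^p$, $\big|\,\|u_n\|_p-\|\phi_{\lambda,n}\|_p\,\big|\le|q_n|\,\|\G_\lambda\|_p\to0$. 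These two facts are the only consequences of $q_n\to0$ that I will need.

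First I would establish that the minimizing sequence is bounded. As $E(u_n)\to\Eps(\mu)$, the energy is bounded above; inserting the extended Gagliardo--Nirenberg estimate \eqref{GND} into the energy expression \eqref{E} and using that $\|\phi_{\lambda,n}\|_2$ and $|q_n|$ are bounded yields
$$E(u_n)\ge \frac12\|\na\phi_{\lambda,n}\|_2^2-C_1\|\na\phi_{\lambda,n}\|_2^{p-2}-C_2,$$
and since $p-2<2$ the right-hand side is coercive in $\|\na\phi_{\lambda,n}\|_2$. Hence $\|\na\phi_{\lambda,n}\|_2$ is bounded, and by \eqref{GNclass} so is $\|\phi_{\lambda,n}\|_p$. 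This is exactly where the subcritical assumption $p<4$ enters, in the same way as in Proposition \ref{Epabound}.

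The heart of the argument is a comparison of $E(u_n)$ with the NLS energy of a renormalization of the regular part. Set $c_n:=\sqrt\mu/\|\phi_{\lambda,n}\|_2$, well defined for $n$ large since $\|\phi_{\lambda,n}\|_2\to\sqrt\mu>0$, and $w_n:=c_n\phi_{\lambda,n}\in H^1_\mu(\Rd)$, so that $E^0(w_n)\ge\Eps^0(\mu)$ by definition. Writing out $E(u_n)-E^0(w_n)$ from \eqref{E} and \eqref{Epclass}, every discrepancy vanishes: the term $\tfrac{\alpha+\theta_\lambda}{2}|q_n|^2\to0$; the term $\tfrac{\lambda}{2}\big(\|\phi_{\lambda,n}\|_2^2-\mu\big)\to0$; the term $\tfrac{1-c_n^2}{2}\|\na\phi_{\lambda,n}\|_2^2\to0$ because $c_n\to1$ and the gradient stays bounded; and the nonlinear discrepancy $\tfrac1p\big(c_n^p\|\phi_{\lambda,n}\|_p^p-\|u_n\|_p^p\big)\to0$, using $c_n\to1$ together with the $L^p$-closeness of $u_n$ and $\phi_{\lambda,n}$ recorded above and their uniform $L^p$-boundedness. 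Therefore $E(u_n)=E^0(w_n)+o(1)\ge\Eps^0(\mu)+o(1)$, and letting $n\to\infty$ gives $\Eps(\mu)\ge\Eps^0(\mu)$, contradicting Proposition \ref{Eps<Eps0}. Thus $\liminf_n|q_n|>0$, which provides the constant $C$ and the index $\bar n$ claimed in the statement.

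The step I expect to be most delicate is securing the uniform bounds \emph{before} the comparison can even be set up: one must know that $\|\phi_{\lambda,n}\|_2$, $\|\na\phi_{\lambda,n}\|_2$ and $\|\phi_{\lambda,n}\|_p$ remain bounded, and crucially that $\|\phi_{\lambda,n}\|_2$ stays away from $0$, so that the renormalization $w_n$ is legitimate and the energy difference is genuinely $o(1)$. Once these are in hand, the argument reduces to the clean observation that a vanishing charge forces the $\delta$-NLS energy down to the NLS level $\Eps^0(\mu)$, which the strict gap $\Eps(\mu)<\Eps^0(\mu)$ forbids.
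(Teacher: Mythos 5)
Your proposal is correct and follows essentially the same route as the paper's own proof: contradiction via $q_n\to 0$ at fixed $\lambda$, Gagliardo--Nirenberg to bound $\|\na\phi_{\lambda,n}\|_2$ (using $p<4$), renormalization of the regular part to mass $\mu$ (your $w_n$ is the paper's $\xi_n$), and the conclusion $\Eps(\mu)\ge\Eps^0(\mu)$ contradicting the strict gap of Proposition \ref{Eps<Eps0}. The only difference is cosmetic: you spell out the vanishing of each discrepancy term in $E(u_n)-E^0(w_n)$, which the paper compresses into $E(u_n)=E^0(\phi_{\lambda,n})+o(1)=E^0(\xi_n)+o(1)$.
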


\begin{proof}
We proceed by contradiction. Suppose that there exists a subsequence of $q_{n}$, that we do not rename, such that $q_n \to 0$. Then, $\| \phi_{\lambda,n} \|_2^2$ is bounded since it converges to $\mu$.
Moreover,
applying Gagliardo-Nirenberg inequality \eqref{GND} to definition \eqref{E} one obtains
\begin{eqnarray*}
E (u_n)&\geq &\f 1 2 \| \nabla \phi_{\lambda,n} \|_2^2 + \f \lambda 2 ( \| \phi_{\lambda,n} \|_2^2 - \mu ) 
+ \f{( \alpha + \theta_\lambda)} 2 |q_n |^2 - \f {C_p} p \left(\|\nabla \phi_{\lambda,n} \|_2^{p-2} \| \phi_{\lambda,n}\|_2^2+\f{|q|^{p}}{\la}\right)\\
&=&\f 1 2 \| \nabla \phi_{\lambda,n} \|_2^2 + \f \lambda 2 ( \| \phi_{\lambda,n} \|_2^2 - \mu ) 
+ \f{( \alpha + \theta_\lambda)} 2 |q_n |^2 +o(1)
\end{eqnarray*}
that guarantees the boundedness of $\| \nabla \phi_{\lambda,n} \|_{2}$, since $E(u_n)$ is bounded from above and $p<4$.

We introduce the sequence $ \xi_n = \frac {\sqrt \mu}{\| \phi_{\lambda,n} \|_2}\phi_{\lambda,n}$, such that $\| \xi_n \|_2^2 = \mu$ and 
$ \| \nabla \xi_n \|_2^2 = \frac \mu  {\| \phi_{\lambda,n} \|^2_2} \| \nabla \phi_{\lambda,n} \|^2_2 $ is bounded.
Then, using that and the fact that $q_n \to 0$ and $\phi_{\lambda,n} - u_n \to 0$ strongly in every space $L^p (\Rd)$ with $2 \leq p < \infty$, one obtains
\begin{eqnarray*}
E (u_n) & = & E^0 (\phi_{\lambda, n}) + o (1) \ = \ E^0 (\xi_n) + o (1) \\
& \geq & E^0 (S_\mu) + o (1), \qquad\text{as}\quad n \to \infty,
\end{eqnarray*}
where $S_\mu$ is a ground state for $E^0$ at mass $\mu$.
So, passing to the limit,
$$ \mathcal E (\mu) \ \geq \mathcal E^0 (\mu), $$
that contradicts Proposition \ref{Eps<Eps0} and then $q_n$ cannot converge to zero. This conclusion holds for every subsequence of a minimizing sequence for $E$, therefore limit points of the complex sequence $q_n$ must be separated from zero, and the proof is complete.
\end{proof}

\begin{lemma}
\label{minimseq2}
Let $p\in(2,4)$, $\alpha\in\R$ and $\mu>0$. Let also $(u_{n})_n$ be a $\delta$-NLS energy minimizing sequence of mass $\mu$. Then, it is bounded in $L^r(\Rd)$, for every $r\geq2$, and there exists $u\in D\setminus H^1(\Rd) $ such that, up to subsequences,
\begin{itemize}
 \item $u_n\deb u$ in $L^2(\Rd)$,
 \item $u_n\to u$ a.e. in $\Rd$,
\end{itemize}
as $n\to+\infty$. In particular, if one fixes $\lambda>0$ and the decomposition $u_{n}=\phi_{n,\la}+q_{n}\G_{\la}$, then $(\phi_{n,\la})_n$ and $(q_n)_n$ are bounded in $H^1(\Rd)$ and $\C$, respectively, and there exist $\phi_{\la}\in H^{1}(\Rd)$ and $q\in \C\setminus\{0\}$ such that $u=\phi_\la+q\G_\la$ and, up to subsequences,
\begin{itemize}
 \item $\phi_{n,\la}\deb\phi_{\la}$ in $L^2(\Rd)$,
 \item $\nabla\phi_{n,\la}\deb\nabla\phi_{\la}$ in $L^2(\Rd)$,
 \item $q_{n}\rightarrow q$ in $\C$,
\end{itemize}
as $n\to+\infty$.
\end{lemma}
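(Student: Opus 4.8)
The plan is to bound the two components of the decomposition uniformly, transfer the bounds to an arbitrary fixed parameter, and then extract the announced weak and almost everywhere limits. I would begin with the mass-adapted decomposition $u_n=\phi_n+q_n\G_{\la_n}$, $\la_n:=|q_n|^2/\mu$, which is legitimate for $n$ large since Lemma~\ref{minimseq1} gives $|q_n|>C>0$, so that $u_n\in D\setminus H^1(\Rd)$. Inserting the extended Gagliardo--Nirenberg bound \eqref{GNDs} into the energy written in the form \eqref{Epala} yields precisely the lower estimate \eqref{Eun-prel}, in which both relevant groups of terms are coercive: the gradient group $\tfrac12\|\na\phi_n\|_2^2-\tfrac{M_p}{p}\|\na\phi_n\|_2^{p-2}\mu$ is bounded below and diverges as $\|\na\phi_n\|_2\to\infty$ because $p-2<2$, while the charge group $\big(\alpha-1+\tfrac{\log(|q_n|/(2\sqrt\mu))+\ga}{2\pi}\big)\tfrac{|q_n|^2}{2}-\tfrac{M_p}{p}|q_n|^{p-2}\mu$ diverges as $|q_n|\to\infty$ since the $|q_n|^2\log|q_n|$ contribution dominates $|q_n|^{p-2}$ for $p<4$. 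As $E(u_n)$ converges to $\Eps(\mu)$, hence is bounded above, I deduce that $\|\na\phi_n\|_2$ and $|q_n|$ are bounded; in particular $\la_n$ stays away from $0$ and $\infty$. The mass constraint together with \eqref{eq-GLP} then bounds $\|\phi_n\|_2\le\sqrt\mu+|q_n|\|\G_{\la_n}\|_2$, so $(\phi_n)_n$ is bounded in $H^1(\Rd)$ and, by the two-dimensional Sobolev embedding, in every $L^r(\Rd)$ with $2\le r<\infty$; combined with $\|\G_{\la_n}\|_r^r=\|\G_1\|_r^r/\la_n$ bounded, this gives the asserted boundedness of $(u_n)_n$ in $L^r(\Rd)$ for every $r\in[2,\infty)$.

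Next I would fix $\la>0$ and pass to the decomposition $u_n=\phi_{n,\la}+q_n\G_\la$, whose charge is the same $q_n$, the charge being independent of the parameter. Since $\phi_{n,\la}=\phi_n+q_n(\G_{\la_n}-\G_\la)$ and $\G_{\la_n}-\G_\la\in H^2(\Rd)$ with $H^1$-norm controlled through \eqref{eq-Glanu}--\eqref{eq-DGlanu} by the two-sided bounds on $\la_n$, the sequence $(\phi_{n,\la})_n$ is bounded in $H^1(\Rd)$ and $(q_n)_n$ in $\C$. Up to subsequences I then obtain $q_n\to q$ in $\C$, with $|q|\ge C>0$ forcing $q\ne0$, and $\phi_{n,\la}\deb\phi_\la$ in $H^1(\Rd)$ for some $\phi_\la\in H^1(\Rd)$, whence $\phi_{n,\la}\deb\phi_\la$ and $\na\phi_{n,\la}\deb\na\phi_\la$ in $L^2(\Rd)$. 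Setting $u:=\phi_\la+q\G_\la$ we have $u\in D\setminus H^1(\Rd)$ precisely because $q\ne0$.

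Finally, the weak $L^2$ convergence $u_n\deb u$ follows from $\phi_{n,\la}\deb\phi_\la$ in $L^2(\Rd)$ together with $q_n\G_\la\to q\G_\la$ strongly in $L^2(\Rd)$ (as $q_n\to q$ and $\G_\la$ is fixed). For the almost everywhere convergence I would invoke local compactness: the $H^1$-bounded sequence $(\phi_{n,\la})_n$ converges, along a further subsequence extracted by a diagonal argument over a ball exhaustion of $\Rd$, strongly in $L^2_{loc}(\Rd)$ and hence a.e., necessarily to $\phi_\la$; consequently $u_n=\phi_{n,\la}+q_n\G_\la\to\phi_\la+q\G_\la=u$ a.e. on $\Rd$. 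The delicate step is the boundedness of the charges: the naive estimate based on the fixed-$\la$ inequality \eqref{GND} fails, since the resulting term $-|q_n|^p/\la$ is uncontrolled for $p>2$, and the resolution is exactly to use the mass-adapted decomposition, where the coefficient of $|q|^2$ carries the factor $\log|q|$ and the coercive $|q|^2\log|q|$ growth overcomes the $|q|^{p-2}$ loss from the nonlinearity whenever $p<4$.
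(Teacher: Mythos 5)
Your proof is correct and follows essentially the same route as the paper: the mass-adapted decomposition $u_n=\phi_n+q_n\G_{|q_n|^2/\mu}$ justified by Lemma \ref{q>C}, coercivity of the energy via \eqref{GNDs} and \eqref{Eun-prel} (where the $|q|^2\log|q|$ term beats $|q|^{p-2}$ for $p<4$), transfer of the $H^1$ bounds to an arbitrary fixed $\la$ through \eqref{eq-Glanu}--\eqref{eq-DGlanu}, and then weak compactness plus Rellich--Kondrachov for the a.e.\ limit. The only (harmless) deviations are organizational: you perform the change of decomposition parameter in a single step for any $\la>0$ where the paper first treats $\la\geq C_2/\mu$ and then transfers to smaller $\la$, and you spell out the $L^r$ boundedness that the paper leaves implicit.
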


\begin{proof}
Let $(u_{n})_n$ be a $\delta$-NLS energy minimizing sequence of mass $\mu$. By Banach-Alaoglu Theorem, $u_n\deb u$ in $L^2(\Rd)$ up to subsequences. Moreover, owing to Lemma \ref{q>C} we can suppose without loss of generality that for every $n$ the charge $q_n$ satisfies $|q_n| > C > 0$, then we can rely on the decomposition introduced in \eqref{Sing} and used in \eqref{GNDs}, namely $u_{n}=\phi_{n}+q_{n}\G_{\nu_{n}}$ with $\nu_{n}:=\f{|q_{n}|^{2}}{\|u_{n}\|_{2}^{2}}$. This decomposition guarantees 
$$ \| \phi_n \|_2 \leq \|u_{n}\|_{2}+|q_{n}|\|\G_{\nu_{n}}\|_{2}\leq \left(1+\f{1}{2\sqrt{\pi}}\right) \sqrt{\mu} $$
for every $n$,
so that the sequence $\phi_n$ is bounded in $L^2 (\R^2)$. 

Using \eqref{Epala} and \eqref{Eun-prel}, we have 
\begin{equation}
\label{Eun}
\begin{split}
E(u_{n})&\geq\left (\f{1}{2}\|\na\phi_{n}\|^2_{2}-\f{M_{p}}{p}\mu\|\na\phi_{n}\|^{p-2}_{2}\right) +\f{|q_{n}|^{2}\|\phi_{n}\|^2_{2}}{2\mu}\\
&
+\left(\alpha-1+\f{\log\left(\f{|q_{n}|}{2\sqrt{\mu}}+\gamma\right)}{2\pi}\right)\f{|q_{n}|^2}{2}-\f{M_{p}}{p}\mu|q_{n}|^{p-2},
\end{split}
\end{equation}
for a suitable $M_p>0$. First, we note by \eqref{Eun} that $(\nabla\phi_n)_n$ is bounded in $L^2(\Rd)$ and $(q_n)_n$ is bounded in $\C$, so that, up to subsequences, $q_n\to q$ and $q \neq 0$ since $|q_n| > C > 0$. 

Fix $\la\geq\f{C_{2}}{\mu}$ with $C_2=1+\sup_{n}|q_n|$ and consider the decomposition of each $u_{n}$ according to $\la$, that is $u_{n}=\phi_{n,\la}+q_{n}\G_{\la}$ with $\phi_{\la,n}:=\phi_{n}+q_{n}(\G_{\nu_{n}}-\G_{\la})$. Exploiting \eqref{eq-Glanu} and \eqref{eq-DGlanu} and the estimates on $\phi_n$ and $q_n$, one finds that there exists $M_{1}, M_{2}>0$ such that for every $n\geq\bar{n}$
\begin{equation}
\|\phi_{n,\la}\|_{2}^{2}\le 2\left[\|\phi_{n}\|_{2}^{2}+ \f{1}{4\pi}\left(\f{|q_{n}|^{2}}{\la}+\mu+2|q_{n}|^{2}\f{\log{\la}+\log(\mu)-2\log(|q_{n}|)}{\nu_n-\la}\right)\right]\le M_{1}
\end{equation}
and
\begin{equation}
\|\na\phi_{n,\la}\|_{2}^{2}\le 2\left[\|\na\phi_{n}\|_{2}^{2}+ \f{|q_{n}|^{2}}{4\pi}\left(\left(\la+\f{|q_{n}|^{2}}{\mu}\right)\f{\log{\la}+\log(\mu)-2\log(|q_{n}|)}{\la-\nu_{n}}-2\right)\right]\le M_{2}.
\end{equation}
Hence $(\phi_{n,\la})_n,\,(\nabla\phi_{n,\la})_n$ are bounded in $L^2(\Rd)$, which implies, via the Banach-Alaoglu theorem, that $\phi_{n,\la}\deb\phi_{\la},\,\nabla\phi_{n,\la}\deb\nabla\phi_{\la}$ in $L^{2}(\Rd)$, up to subsequences, and that $u=\phi_\la+q\G_\la$. Furthermore, by Rellich-Kondrakov theorem, $\phi_{n,\la}\to\phi_\la$ in $L^r_{loc}(\Rd)$, for every $r>2$, so that $u_n\to u$ a.e. in $\Rd$.

It is then left to prove that $(\phi_{n,\la})_n,\,(\nabla\phi_{n,\la})_n$ are bounded in $L^2(\Rd)$ also when the decomposition parameter is smaller than $\f{C_2}{\mu}$. To this aim, let $0<\widetilde{\la}< \f{C_{2}}{\mu}$. We can use the decomposition $u_{n}=\phi_{n,\widetilde{\la}}+q_{n}\G_{\widetilde{\la}}$, where $\phi_{n,\widetilde{\la}}=\phi_{n,\la}+q_{n}(\G_{\la}-\G_{\widetilde{\la}})$, with $\la\ge\f{C_2}{\mu}$. However, arguing as before, one can see that $q_{n}(\G_{\la}-\G_{\widetilde{\la}})$ is bounded in $H^1(\Rd)$, which concludes the proof.
\end{proof}

Finally, we have all the tools to prove the existence part of Theorem \ref{exchar-gs}

\begin{proof}[Proof of Theorem \ref{exchar-gs}-(i)]
Let $(u_{n})_n$ be a $\delta$-NLS energy minimizing sequence of mass $\mu$. Assume also, without loss of generality, that it is a subset of $D_\mu\setminus H^1(\Rd)$, so that we can write $u_{n}=\phi_{n,\la}+q_{n}\G_{\la}$, with $q_{n}\neq 0$ and $\la>0$.  As a consequence, all the results of Lemma \ref{minimseq2} hold and all the following limits hold up to subsequences.

Set $m:=\|u\|_2^2$. By weak lower semicontinuity of the $L^2(\Rd)$-norm, $m\le\mu$. Moreover, as $q\neq 0$, $m\neq0$. Assume, then, by contradiction, that $0<m<\mu$. Note that, since $u_n\deb u$ in $L^2(\Rd)$, $\|u_n-u\|_2^2=\mu-m+o(1)$, as $n\to+\infty$. On the one hand, since $p>2$ and $\f{\mu}{\|u_n-u\|_2^2}>1$ for $n$ sufficiently large, there results that
\begin{equation*}
\begin{split}
\Eps(\mu)&\le E\left(\sqrt{\f{\mu}{\|u_n-u\|_2^2}}(u_n-u)\right)=\f{1}{2}\f{\mu}{\|u_n-u\|_2^2}Q(u_n-u)-\f{1}{p}\left(\f{\mu}{\|u_n-u\|_2^2}\right)^{\f{p}{2}}\|u_n-u\|_p^p\\
&<\f{\mu}{\|u_n-u\|_2^2}E(u_n-u)
\end{split}
\end{equation*}
and thus
\begin{equation}
\label{E-un-u}
\liminf_n E(u_n-u)\ge \f{\mu-m}{\mu}\Eps(\mu).
\end{equation}
On the other hand, a similar computation yields 
\begin{equation*}
\Eps(\mu)\le E\left(\sqrt{\f{\mu}{\|u\|_2^2}}u\right)<\f{\mu}{\|u\|_2^2} E(u),
\end{equation*}
so that
\begin{equation}
\label{E-u}
E(u)>\f{m}{\mu}\Eps(\mu).
\end{equation}
In addition, we can also prove that
\begin{equation}
\label{BLext}
E(u_n)=E(u_n-u)+E(u)+o(1)\qquad\text{as} \quad n\to +\infty
\end{equation}
Indeed, since, $u_n\deb u,\,\phi_{n,\la}\deb\phi_{\la},\,\na\phi_{n,\la}\deb\na\phi_{\la}$ in $L^2(\Rd)$ and $q_n\to q$, we have that 
\begin{equation*}
Q(u_n-u)=Q(u_n)-Q(u)+o(1),\qquad n\to +\infty,
\end{equation*}
while $\|u_n\|_p^p\leq C$ and $u_n\to u$ a.e. on $\Rd$, enable one to use the well known Brezis-Lieb lemma (\cite{BL-83}) in order to get
\begin{equation*}
\|u_n\|_p^p=\|u_n-u\|_p^p+\|u\|_p^p+o(1),\qquad n\to +\infty.
\end{equation*}
Combining \eqref{E-un-u}, \eqref{E-u} and \eqref{BLext}, one can see that
\begin{equation*}
\Eps(\mu)=\liminf_n E(u_n)=\liminf_n E(u_n-u)+ E(u)>\f{\mu-m}{\mu}\Eps(\mu)+\f{m}{\mu}\Eps(\mu)=\Eps(\mu),
\end{equation*}
which is a contradiction. Therefore, $m=\mu$, so that $u\in D_\mu$ and, in particular, $u_n\to u$ in $L^2(\Rd)$ and $\phi_{n,\la}\to \phi_\la$ in $L^2(\Rd)$. 

It is, then, left to show that 
\begin{equation}
\label{eq-semicont}
E(u)\le \liminf_n E(u_n)=\Eps(\mu).
\end{equation}
However, by all the limits obtained before, it suffices to prove that $u_n\to u$ in $L^p(\Rd)$, in order to get \eqref{eq-semicont}. Now, from \eqref{GND},
\begin{equation*}
\|u_n-u\|_p^p\le K_p \left(\|\na\phi_{n,\la}-\na \phi_\la\|_{2}^{p-2}\|\phi_{n,\la}-\phi_\la\|_{2}^{2}+\f{|q_n-q|^{p}}{\la}\right)
\end{equation*}
and then since $\|\na\phi_{n,\la}-\na \phi_\la\|_{2}$ is bounded, $\phi_{n,\la}\to \phi_\la$ in $L^2(\Rd)$ and $q_n\to q$ in $\C$, the claim is proved.
\end{proof}

%%%%%%%%%%%%%%%%%%%%%%%%%%%%%%%%%%%%%%%%%%%%%%%%%%%%%%%%%%%%%%%%%%%%%%%%%%%%%%%%
%%%%%%%%%%%%%%%%%%%%%%%%%%%%%%%%%%%%%%%%%%%%%%%%%%%%%%%%%%%%%%%%%%%%%%%%%%%%%%%%
%%%%%%%%%%%%%%%%%%%%%%%%%%%%%%%%%%%%%%%%%%%%%%%%%%%%%%%%%%%%%%%%%%%%%%%%%%%%%%%%
%%%%%%%%%%%%%%%%%%%%%%%%%%%%%%%%%%%%%%%%%%%%%%%%%%%%%%%%%%%%%%%%%%%%%%%%%%%%%%%%
%%%%%%%%%%%%%%%%%%%%%%%%%%%%%%%%%%%%%%%%%%%%%%%%%%%%%%%%%%%%%%%%%%%%%%%%%%%%%%%%
%%%%%%%%%%%%%%%%%%%%%%%%%%%%%%%%%%%%%%%%%%%%%%%%%%%%%%%%%%%%%%%%%%%%%%%%%%%%%%%%
%%%%%%%%%%%%%%%%%%%%%%%%%%%%%%%%%%%%%%%%%%%%%%%%%%%%%%%%%%%%%%%%%%%%%%%%%%%%%%%%
%%%%%%%%%%%%%%%%%%%%%%%%%%%%%%%%%%%%%%%%%%%%%%%%%%%%%%%%%%%%%%%%%%%%%%%%%%%%%%%%
%%%%%%%%%%%%%%%%%%%%%%%%%%%%%%%%%%%%%%%%%%%%%%%%%%%%%%%%%%%%%%%%%%%%%%%%%%%%%%%%
%%%%%%%%%%%%%%%%%%%%%%%%%%%%%%%%%%%%%%%%%%%%%%%%%%%%%%%%%%%%%%%%%%%%%%%%%%%%%%%%
%%%%%%%%%%%%%%%%%%%%%%%%%%%%%%%%%%%%%%%%%%%%%%%%%%%%%%%%%%%%%%%%%%%%%%%%%%%%%%%%
%%%%%%%%%%%%%%%%%%%%%%%%%%%%%%%%%%%%%%%%%%%%%%%%%%%%%%%%%%%%%%%%%%%%%%%%%%%%%%%%

%%%%%%%%%%%%%%%%%%%%%%%%%%%%%%%%%%
%%%%%%% Action minimizers %%%%%%%%
%%%%%%%%%%%%%%%%%%%%%%%%%%%%%%%%%%

\section{Action minimizers existence: proof of Theorem \ref{exchar-actmin} -- (i)}
\label{sec-am}

The aim of this section is proving point $(i)$ of Theorem \ref{exchar-actmin}, that is the existence/nonexistence of $\delta$-NLS action minimizers at frequency $\omega$.

Preliminarily, we recall that, in the standard case, NLS-action minimizers are those functions $u\in N_\omega^0$ such that $S_\omega^0(v)=d^0(\omega)$, with
\begin{gather*}
d^0(\omega):=\inf_{v\in N^0_\omega}S_\omega^0(v),\\
 S_\omega^0(v):=E^0(v)+\f{\omega}{2}\|v\|_2^2,\\
 N_\omega^0:=\{v\in H^1(\Rd)\setminus\{0\}:I_\omega^0(v)=0\},\qquad I_\omega^0(v):=\|\na v\|_{2}^{2}+\omega\|v\|_{2}^{2}-\|v\|_{p}^{p}.\\
\end{gather*}
We also note that
\begin{equation}
\label{eq-funceq}
 S_\omega(v)=\widetilde{S}(v)>0,\qquad\forall v\in N_\omega,
\end{equation}
with $S_\omega$ and $N_\omega$ given by \eqref{SwE} and \eqref{eq-nehari}, respectively, and
\begin{equation*}
\widetilde{S}(v):=\f{p-2}{2p}\|v\|_p^p.
\end{equation*}
Hence, combining with the fact that ${S_\omega}_{|_{H^1(\Rd)}}=S_\omega^0$ and $N_\omega\cap H^1(\Rd)=N_\omega^0$, it is straightforward that 
\begin{equation}
\label{0dwd0w}
0\le d(\omega)\le d^0(\omega),\qquad\forall \omega\in\R.
\end{equation}
In addition, since $d^0(\omega)=0$, for every $\omega\leq0$ (see, e.g., \cite[Lemma 2.4 and Remark 2.5]{DST-21}), one immediately sees that $d(\omega)=0$, for every $\omega\le0$, which entails that there cannot be any $\delta$-NLS action minimizer at frequency $\omega$ whenever $\omega\leq0$. In view of this we will focus throughout only on the case $\omega>0$. 
 
Now, the first step of our discussion is to detect for which $\omega>0$ the two inequalities in \eqref{0dwd0w} are strict. To this aim let us introduce the set 
\begin{equation*}
\widehat{N}_{\omega}:=\{q\G_{\la}: \la>0,\,q\in\C\setminus\{0\},\,I_{\omega}(q\G_{\la})=0\},
\end{equation*}
which is the subset of $N_\omega$ containing those functions admitting a decomposition with the sole singular part for at least one value of $\lambda>0$. The next two lemmas characterize the set $\widehat{N}_{\omega}$ on varying $\omega>0$.

\begin{lemma}
\label{qlacond}
Let $p>2$, $\alpha\in\R$ and $\omega>0$. Then, $q\G_{\la}\in \widehat{N}_{\omega}$ if and only if $\la>0$ and $q\in \C\setminus\{0\}$ satisfy
\begin{equation}
\label{admla}
\f{\omega-\la}{4\pi}+\la\left(\alpha+\theta_\la\right)>0
\end{equation}
(with $\theta_\la$ defined by \eqref{eq-thla}) and
\begin{equation}
\label{qandla}
|q|=\f{1}{K_p}\left[\f{\omega-\la}{4\pi}+\la\left(\alpha+\theta_\la\right)\right]^{\f{1}{p-2}},
\end{equation}
with $K_p=\|\G_1\|_p^{\f{p}{p-2}}$.

\end{lemma}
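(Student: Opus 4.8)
The plan is to reduce the membership condition $q\G_\la\in\widehat{N}_\omega$ to a single scalar identity in $|q|$ by evaluating the Nehari functional $I_\omega$ directly on a purely singular function, and then to read off \eqref{admla} and \eqref{qandla} as the two requirements for that identity to admit a solution with $q\neq 0$. The whole argument is an elementary computation; the point worth flagging is only that both conditions emerge simultaneously from one equation, and that the chain of manipulations is reversible so as to yield the full equivalence.

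First I would exploit that, for $v=q\G_\la$ decomposed according to the very parameter $\la$, the regular part vanishes, i.e.\ $\phi_\la\equiv 0$. Since $I_\omega$ is independent of the chosen decomposition (by \eqref{eq-utile} it depends only on $Q_\omega$ and on $\|v\|_p^p$), evaluating it through this decomposition is legitimate, and the $\|\na\phi_\la\|_2^2$ and $\la\|\phi_\la\|_2^2$ terms in \eqref{eq-Inehari} drop out, leaving
\[
I_\omega(q\G_\la)=(\omega-\la)\|q\G_\la\|_2^2+(\alpha+\theta_\la)|q|^2-\|q\G_\la\|_p^p.
\]
Inserting the explicit norms from \eqref{eq-GLP}, namely $\|\G_\la\|_2^2=\tf{1}{4\pi\la}$ and $\|\G_\la\|_p^p=\tf{\|\G_1\|_p^p}{\la}$, I would obtain
\[
I_\omega(q\G_\la)=\f{|q|^2}{4\pi\la}(\omega-\la)+(\alpha+\theta_\la)|q|^2-\f{|q|^p}{\la}\|\G_1\|_p^p.
\]

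Then, because $q\neq 0$, the equation $I_\omega(q\G_\la)=0$ is equivalent, after dividing by $|q|^2$ and multiplying by $\la$, to
\[
\f{\omega-\la}{4\pi}+\la(\alpha+\theta_\la)=|q|^{p-2}\|\G_1\|_p^p.
\]
The right-hand side is strictly positive since $q\neq 0$, which forces the left-hand side to be positive as well: this is exactly \eqref{admla}, so the positivity constraint is not an extra hypothesis but is automatically imposed. Solving the same identity for $|q|$ and recalling $K_p=\|\G_1\|_p^{p/(p-2)}$ yields \eqref{qandla}. Conversely, if $\la>0$ and $q\neq 0$ satisfy \eqref{admla} and \eqref{qandla}, running the chain of equivalences backwards gives $I_\omega(q\G_\la)=0$, hence $q\G_\la\in\widehat{N}_\omega$. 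As every step is a reversible algebraic manipulation, there is no genuine analytic obstacle; the only care needed is to note that \eqref{admla} and \eqref{qandla} together are precisely equivalent to membership in $\widehat{N}_\omega$.
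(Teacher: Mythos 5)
Your proof is correct and follows essentially the same route as the paper's: evaluate $I_\omega$ on $q\G_\la$ using the purely singular decomposition and the explicit norms \eqref{eq-GLP}, divide by $|q|^2\neq 0$, and read off \eqref{admla} (from positivity of $|q|^{p-2}$) and \eqref{qandla} (by solving for $|q|$) from the resulting scalar equation. Your additional remarks on decomposition-independence of $I_\omega$ and reversibility of the steps are sound and merely make explicit what the paper leaves implicit.
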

\begin{proof}
Fix $\psi=q\G_{\la}$ with $q\neq 0$ and $\la>0$. By \eqref{eq-GLP}, $I_{\omega}(\psi)=0$ if and only if
\begin{equation*}
\f{\omega-\la}{4\pi\la}|q|^{2}+\left(\alpha+\theta_\la\right)|q|^{2}-\f{K}{\la}|q|^{p}=0, 
\end{equation*}
with $K:=\|\G_1\|_p^p$, which entails
\begin{equation}
|q|^{p-2}=\f{1}{K}\left[\f{\omega-\la}{4\pi}+\la\left(\alpha+\theta_\la\right)\right].
\end{equation}
Since $|q|^{p-2}>0$, \eqref{admla} and \eqref{qandla} follow.
\end{proof}

Let us define, now,
\begin{equation}
\label{omega0}
\omega_{0}:=-\ell_\alpha,
\end{equation}
with $\ell_\al$ defined by \eqref{eq-spectrum}.
\begin{lemma}

\label{Nomega}
Let $p>2$, $\alpha\in\R$, $\omega>0$ and $\omega_0$ as in \eqref{omega0}. Therefore:
\begin{itemize}
 \item[(i)] if $\omega\in(0,\omega_{0})$, then \begin{equation}
\widehat{N}_{\omega}=\{q\G_{\la}: \la\in(0,\la_{1}(\omega))\cup(\la_{2}(\omega),+\infty),\,q\in\C\setminus\{0\}\text{ and satisfies \eqref{qandla}}\},
\end{equation}
with $\la_{1}(\omega)\in(0,\omega_{0})$ and $\la_{2}(\omega)>\omega_{0}$ the sole solutions of the equation
\begin{equation*}
\f{\omega-\la}{4\pi}+\la\left(\alpha+\theta_\la\right)=0;
\end{equation*}
 \item[(ii)] if $\omega=\omega_{0}$, then
\begin{equation}
\widehat{N}_{\omega}=\{q\G_{\la}:\la>0,\,\la\neq \omega_{0},\,q\in\C\setminus\{0\}\text{ and satisfies \eqref{qandla}}\};
\end{equation}
\item[(iii)] if $\omega>\omega_{0}$, then 
\begin{equation}
\widehat{N}_{\omega}=\{q\G_{\la}:\la>0,\,q\in\C\setminus\{0\}\text{ and satisfies \eqref{qandla}}\}
\end{equation}
\end{itemize}
\end{lemma}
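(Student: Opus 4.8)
The plan is to reduce the entire statement to the study of the single real function
\[
g(\la):=\f{\omega-\la}{4\pi}+\la\left(\alpha+\theta_\la\right),\qquad\la>0,
\]
which is exactly the left-hand side of the admissibility condition \eqref{admla}. By Lemma \ref{qlacond}, a function $q\G_\la$ with $q\neq0$ belongs to $\widehat{N}_\omega$ if and only if $g(\la)>0$ and $|q|$ is prescribed by \eqref{qandla}. Since in all three cases the description of the admissible charges is identical and already granted by \eqref{qandla}, the whole content of the lemma amounts to determining, for each $\omega>0$, the set $\{\la>0:g(\la)>0\}$.

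First I would differentiate. Using $\theta_\la'=\tf{1}{4\pi\la}$, which is immediate from \eqref{eq-thla}, the two $\tf{1}{4\pi}$ contributions cancel and one obtains the clean identity
\[
g'(\la)=\alpha+\theta_\la.
\]
This is the crux of the argument and it makes the rest transparent. Since $\theta_\la$ is strictly increasing in $\la$, the derivative $g'$ is strictly increasing and vanishes at a single point; solving $\alpha+\theta_\la=0$ via \eqref{eq-thla} yields $\la=4e^{-4\pi\alpha-2\gamma}=\omega_0$, by the definition of $\omega_0=-\ell_\alpha$ in \eqref{eq-spectrum}. Hence $g$ is strictly decreasing on $(0,\omega_0)$ and strictly increasing on $(\omega_0,+\infty)$, so it attains a unique global minimum at $\la=\omega_0$. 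Because $\alpha+\theta_{\omega_0}=0$ there, a direct evaluation gives the minimum value
\[
g(\omega_0)=\f{\omega-\omega_0}{4\pi}.
\]

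Next I would record the boundary behaviour: as $\la\to0^+$ both $\la$ and $\la\theta_\la\sim\la\log\la$ vanish, so $g(\la)\to\tf{\omega}{4\pi}>0$ (here the hypothesis $\omega>0$ enters), while as $\la\to+\infty$ the term $\la\theta_\la$ forces $g(\la)\to+\infty$. With the monotonicity picture in hand, the trichotomy becomes a direct sign analysis of $g(\omega_0)=\tf{\omega-\omega_0}{4\pi}$. If $\omega>\omega_0$ the minimum is positive, so $g>0$ on all of $(0,+\infty)$, giving (iii). If $\omega=\omega_0$ the minimum is $0$ and is attained only at $\la=\omega_0$, so $g>0$ precisely on $(0,+\infty)\setminus\{\omega_0\}$, giving (ii). If $\omega\in(0,\omega_0)$ the minimum is negative; combined with positivity at the two ends and strict monotonicity on each side of $\omega_0$, the intermediate value theorem produces exactly two zeros $\la_1(\omega)\in(0,\omega_0)$ and $\la_2(\omega)\in(\omega_0,+\infty)$, which are the only solutions of $g(\la)=0$, and $g>0$ holds exactly on $(0,\la_1(\omega))\cup(\la_2(\omega),+\infty)$, giving (i).

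I expect no serious obstacle beyond carrying out the derivative computation correctly, the real payoff being the simplification $g'(\la)=\alpha+\theta_\la$. The only point demanding a little care is verifying the two limits at $0^+$ and $+\infty$, so that the sign changes are genuine and, in case (i), the two zeros genuinely straddle $\omega_0$; but these follow from the elementary asymptotics of $\la\log\la$.
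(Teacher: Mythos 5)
Your proposal is correct and follows essentially the same route as the paper: both reduce the lemma via Lemma \ref{qlacond} to the sign analysis of $g(\la)=\f{\omega-\la}{4\pi}+\la(\alpha+\theta_\la)$, exploit the identity $g'(\la)=\alpha+\theta_\la$ to locate the unique global minimum at $\la=\omega_0$ with $g(\omega_0)=\f{\omega-\omega_0}{4\pi}$, and conclude the trichotomy from the limits at $0^+$ and $+\infty$ together with the strict monotonicity on each side of $\omega_0$.
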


\begin{proof}
Let $\omega>0$ and introduce the function 
\begin{equation*}
g(\la):=\f{\omega-\la}{4\pi}+\la\left(\alpha+\theta_\la\right).
\end{equation*}
Recall that, in view of Lemma \ref{qlacond}, $q\G_{\la}\in \widehat{N}_{\omega}$ if and only if $g(\la)>0$ and $q$ satisfies \eqref{qandla}, namely $|q|=K_p^{-1}g^{\f{1}{p-2}}(\la)$. Now, it is straightforward (by \eqref{eq-thla}) that 
\begin{equation*}
\lim_{\la\to 0^{+}}g(\la)=\f{\omega}{4\pi}>0\,,\quad \lim_{\la\to +\infty} g(\la)=+\infty
\end{equation*}
and
\begin{equation*}
g'(\la)=\alpha+\theta_\la.
\end{equation*}
Hence, one can easily see that $g$ is decreasing for $\la<\omega_{0}$ and increasing for $\la>\omega_{0}$, has a global minimizer at $\la=\omega_{0}$ and $g(\omega_{0})=\f{\omega-\omega_{0}}{4\pi}$. Therefore, if $\omega>\omega_{0}$, then condition \eqref{admla} can be satisfied for every $\la>0$. On the contrary, if $\omega=\omega_{0}$, then \eqref{admla} can be satisfied provided that $\la>0$ and $\la\neq\omega_{0}$. Finally, if $\omega<\omega_{0}$, then $g(\omega_{0})<0$ and this implies that there exist $\la_{1}(\omega),\,\la_{2}(\omega)>0$ such that \eqref{admla} does not hold if and only if $\la\in [\la_{1}(\omega),\la_{2}(\omega)]$. Note that $\la_{1}(\omega)$ and $\la_{2}(\omega)$ are the only values of $\la>0$ for which $g$ vanishes.
\end{proof}

After this characterization of the set $\widehat{N}_\omega$, we can estimate the value of $d(\omega)$ for $\omega\in(0,\omega_0]$.

\begin{proposition}
\label{pro-dzero}
Let $p>2$, $\alpha\in\R$. Then, $d(\omega)=0$, for every  $\omega\in(0,\omega_{0}]$. 
\end{proposition}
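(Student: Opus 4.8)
The plan is to exploit the inequality already recorded in \eqref{0dwd0w}, namely $d(\omega)\ge 0$ for every $\omega$. Hence it suffices to produce, for each $\omega\in(0,\omega_0]$, a family of admissible competitors in $N_\omega$ along which $S_\omega$ tends to $0$. The natural candidates are the purely singular functions $q\G_\la\in\widehat{N}_\omega\subset N_\omega$, on which, by \eqref{eq-funceq} and \eqref{eq-GLP},
\[
S_\omega(q\G_\la)=\widetilde{S}(q\G_\la)=\f{p-2}{2p}\|q\G_\la\|_p^p=\f{p-2}{2p}\,\f{\|\G_1\|_p^p}{\la}\,|q|^p.
\]
So the goal reduces to choosing the pair $(\la,q)$ --- constrained to lie in $\widehat{N}_\omega$ --- so that the right-hand side vanishes in the limit.

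First I would record, from \eqref{qandla}, that on $\widehat{N}_\omega$ the charge is determined by $\la$ through $|q|=K_p^{-1}g(\la)^{\f{1}{p-2}}$, where $g(\la):=\f{\omega-\la}{4\pi}+\la(\alpha+\theta_\la)$ is the function analysed in the proof of Lemma \ref{Nomega}. Substituting, the value $S_\omega(q\G_\la)$ becomes a fixed constant multiple of $g(\la)^{\f{p}{p-2}}/\la$. The crucial observation is that one must \emph{not} let $\la\to 0^+$: there $g(\la)\to\omega/(4\pi)>0$ while the prefactor $1/\la$ blows up, so $S_\omega(q\G_\la)\to+\infty$. Instead, one has to send $\la$ to a \emph{finite, strictly positive} zero of $g$, so that $|q|\to 0$ while $1/\la$ stays bounded.

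Such a zero exists precisely because $\omega\le\omega_0$. Indeed, as shown in the proof of Lemma \ref{Nomega}, $g$ attains its global minimum at $\la=\omega_0$ with $g(\omega_0)=\f{\omega-\omega_0}{4\pi}$. For $\omega\in(0,\omega_0)$ this minimum is negative, so $g$ has two positive zeros $\la_1(\omega)<\omega_0<\la_2(\omega)$, and I would let $\la\uparrow\la_1(\omega)$, remaining in the admissible range $(0,\la_1(\omega))$; for $\omega=\omega_0$ the minimum itself vanishes, $g(\omega_0)=0$, and I would let $\la\to\omega_0$ with $\la\neq\omega_0$. In either case $\la$ stays in a fixed compact subinterval of $(0,+\infty)$ while $g(\la)\to 0$, whence $|q|\to 0$ and $\|q\G_\la\|_p^p=|q|^p\|\G_1\|_p^p/\la\to 0$. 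Therefore $S_\omega(q\G_\la)\to 0$ along this family, giving $d(\omega)\le 0$; combined with $d(\omega)\ge 0$ this yields $d(\omega)=0$.

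The only genuine obstacle here is conceptual rather than computational: identifying the correct degeneration of the competitor. Once one sees that the charge must be driven to zero along a \emph{finite positive} zero of $g$ --- which is available exactly in the regime $\omega\le\omega_0$ and disappears for $\omega>\omega_0$, where $g$ is bounded below by the positive constant $g(\omega_0)$ --- the appearance of the threshold $\omega_0$ in the statement becomes transparent, and the remainder is a direct substitution using \eqref{eq-GLP} and \eqref{qandla}.
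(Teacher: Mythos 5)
Your proposal is correct and follows essentially the same route as the paper's own proof: both take purely singular competitors $q\G_\la\in\widehat{N}_\omega$ with $|q|$ determined by \eqref{qandla}, send $\la\to\la_1(\omega)^-$ when $\omega\in(0,\omega_0)$ (respectively $\la\to\omega_0$ when $\omega=\omega_0$) so that the charge vanishes while $\la$ stays bounded away from $0$, and conclude $S_\omega(q\G_\la)=\widetilde{S}(q\G_\la)=\tfrac{p-2}{2p}\|\G_1\|_p^p|q|^p/\la\to0$, hence $d(\omega)=0$ by the lower bound in \eqref{0dwd0w}. Your additional remark explaining why $\la\to0^+$ fails is not in the paper but is a correct and clarifying observation.
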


\begin{proof}
Let us discuss separately the cases $\omega\in(0,\omega_{0})$ and $\omega=\omega_{0}$. If $\omega\in(0,\omega_{0})$, then in view of Lemma \ref{Nomega} one can check that 
\begin{equation*}
\lim_{\substack{\la\to\la_{1}(\omega)^{-},\\ q\G_{\la}\in N_{\omega}}}|q|=\lim_{\la\to\la_{1}(\omega)^{-}}\f{1}{K_p}\left[\f{\omega-\la}{4\pi}+\la\left(\alpha+\theta_\la\right)\right]^{\f{1}{p-2}}=0.
\end{equation*}
Hence, recalling and \eqref{eq-funceq} and \eqref{eq-GLP},
\begin{multline*}
0\le d(\omega)\le \inf_{q\G_{\la}\in N_{\omega}}S_{\omega}(q\G_{\la})\leq\lim_{\substack{\la\to\la_{1}(\omega)^{-},\\ q\G_{\la}\in N_{\omega}}}S_{\omega}(q\G_{\la})\\
=\lim_{\substack{\la\to\la_{1}(\omega)^{-},\\ q\G_{\la}\in N_{\omega}}}\widetilde{S}(q\G_{\la})=\lim_{\substack{\la\to\la_{1}(\omega)^{-},\\ q\G_{\la}\in N_{\omega}}}\f{p-2}{2p}\|\G_{1}\|_{p}^{p}\f{|q|^{p}}{\la}=0.
\end{multline*}
If, on the contrary, $\omega=\omega_{0}$, then one obtains the same result, just arguing as before and replacing the limits for $\la\to\la_{1}(\omega)^{-}$ with the limits for $\la\to\omega_{0}$.
\end{proof}

This result has an immediate consequence on the existence of the $\delta$-NLS action minimizers below $\omega_0$.

\begin{corollary}
\label{noexistlowfreq}
Let $p>2$, $\alpha\in\R$. Then, there exists no $\delta$-NLS action minimizer at frequency $\omega$, for every $\omega\in(0,\omega_0]$.
\end{corollary}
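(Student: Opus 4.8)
The plan is to obtain the nonexistence directly, by contradiction, combining Proposition \ref{pro-dzero} with the structural identity \eqref{eq-funceq}. Recall that for every $v\in N_\omega$ one has $S_\omega(v)=\widetilde{S}(v)=\tfrac{p-2}{2p}\|v\|_p^p$, and that the definition \eqref{eq-nehari} of the Nehari manifold explicitly excludes the origin. Hence any element of $N_\omega$ is a nonzero element of $L^2(\Rd)$, so it is not a.e. zero and consequently satisfies $\|v\|_p^p>0$. In other words, $S_\omega$ is \emph{strictly} positive on the whole of $N_\omega$, which is exactly the point of the inequality recorded in \eqref{eq-funceq}.

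Now I would suppose, towards a contradiction, that for some $\omega\in(0,\omega_0]$ there exists a $\delta$-NLS action minimizer $u$ at frequency $\omega$. By the definition \eqref{eq-infact} this means $u\in N_\omega$ and $S_\omega(u)=d(\omega)$. On the one hand, Proposition \ref{pro-dzero} gives $d(\omega)=0$; on the other hand, the strict positivity just recalled yields $S_\omega(u)=\tfrac{p-2}{2p}\|u\|_p^p>0$. These two facts are incompatible, so no minimizer can exist for $\omega\in(0,\omega_0]$, which is the claim.

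I expect no genuine obstacle here: the entire analytic content has already been placed in Proposition \ref{pro-dzero}, and the corollary is merely the observation that the vanishing infimum $d(\omega)=0$ cannot be attained. Indeed, the computation in the proof of Proposition \ref{pro-dzero} shows that the value $0$ is approached only in the degenerate limit $|q|\to0$ (i.e. as $\la\to\la_1(\omega)^-$ or $\la\to\omega_0$), along which the candidate functions $q\G_\la$ leave $N_\omega$. The sole point deserving a line of care is the justification that $\|u\|_p>0$ for $u\in N_\omega$, which is immediate since $u\neq0$ as an element of $L^2(\Rd)$ and is therefore not a.e. zero.
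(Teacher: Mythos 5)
Your proof is correct and follows exactly the paper's own argument: the paper's proof of Corollary \ref{noexistlowfreq} is precisely the combination of Proposition \ref{pro-dzero} (giving $d(\omega)=0$ on $(0,\omega_0]$) with the strict positivity $S_\omega(v)=\widetilde{S}(v)>0$ on $N_\omega$ recorded in \eqref{eq-funceq}. You have merely spelled out the (easy) justification that $\|u\|_p>0$ for $u\in N_\omega$, which the paper leaves implicit.
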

\begin{proof}
The claim follows by Proposition \ref{pro-dzero} and \eqref{eq-funceq}.
\end{proof}

On the other hand, in order to discuss the behavior of $d(\omega)$ when $\omega>\omega_0$, it is preliminarily necessary to further investigate the relation between $S_\omega$ and $\widetilde{S}$.

\begin{lemma}
\label{equivprob}
Let $p>2$, $\alpha\in\R$ and $\omega>\omega_0$. Then
\begin{equation}
\label{eq-firstequiv}
d(\omega)=\inf_{v\in\widetilde{N}_\omega}\widetilde{S}(v),
\end{equation}
with
\[
 \widetilde{N}_\omega:=\{v\in D\setminus\{0\}:I_{\omega}(v)\le0\}
\]
(and $I_\omega$ defined by \eqref{eq-Inehari}). Moreover, for any function $u\in D\setminus\{0\}$,
\begin{equation}
\label{eq-secondequiv}
\left\{\begin{array}{l}\displaystyle\widetilde{S}(u)=d(\omega)\\[.2cm]\displaystyle I_{\omega}(u)\le 0\end{array}\right.\qquad\Longleftrightarrow\qquad \left\{\begin{array}{l}\displaystyle S_{\omega}(u)=d(\omega)\\[.2cm]I_{\omega}(u)=0.\end{array}\right.
\end{equation}
\end{lemma}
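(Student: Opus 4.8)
The plan is to reduce the statement to one algebraic identity plus a standard fibering (scaling) argument. First I would record that, by the two formulas in \eqref{eq-utile}, every $v\in D$ satisfies the exact identity $S_\omega(v)=\widetilde{S}(v)+\tf12 I_\omega(v)$, because $S_\omega(v)-\widetilde{S}(v)=\tf12 Q_\omega(v)-\tf12\|v\|_p^p=\tf12 I_\omega(v)$. In particular $S_\omega=\widetilde{S}$ on $N_\omega$ (which is \eqref{eq-funceq}) and $S_\omega\le\widetilde{S}$ on $\widetilde{N}_\omega$. The second ingredient is the positivity of the quadratic form: since $\omega>\omega_0=-\ell_\alpha$ and $\ell_\alpha=\inf\sigma(H_\alpha)$ by \eqref{eq-spectrum}, the variational characterization of the bottom of the spectrum gives $Q(v)\ge\ell_\alpha\|v\|_2^2$ on the whole form domain, whence $Q_\omega(v)\ge(\omega-\omega_0)\|v\|_2^2>0$ for every $v\in D\setminus\{0\}$.

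With these facts, the scaling $v\mapsto tv$ does the work: from $Q_\omega(tv)=t^2Q_\omega(v)$ and $\|tv\|_p^p=t^p\|v\|_p^p$ the equation $I_\omega(tv)=0$ becomes $t^{p-2}=Q_\omega(v)/\|v\|_p^p$, which for $v\neq0$ has the unique positive root
\begin{equation*}
t_*(v):=\Big(\tf{Q_\omega(v)}{\|v\|_p^p}\Big)^{\f{1}{p-2}},
\end{equation*}
so that $t_*(v)\,v\in N_\omega$ and $\widetilde{S}(t_*(v)v)=t_*(v)^p\,\widetilde{S}(v)$. To prove \eqref{eq-firstequiv} I would argue by double inequality: since $N_\omega\subset\widetilde{N}_\omega$ and $S_\omega=\widetilde{S}$ on $N_\omega$, at once $\inf_{\widetilde{N}_\omega}\widetilde{S}\le d(\omega)$; conversely, for $v\in\widetilde{N}_\omega$ the condition $I_\omega(v)\le0$ forces $t_*(v)\le1$, so $\widetilde{S}(t_*(v)v)=t_*(v)^p\widetilde{S}(v)\le\widetilde{S}(v)$ and, as $t_*(v)v\in N_\omega$, $d(\omega)\le\widetilde{S}(t_*(v)v)\le\widetilde{S}(v)$; taking the infimum over $\widetilde{N}_\omega$ closes the chain.

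For \eqref{eq-secondequiv} the implication ``$\Leftarrow$'' is immediate from \eqref{eq-funceq}. For ``$\Rightarrow$'', suppose $u\neq0$ satisfies $\widetilde{S}(u)=d(\omega)$ and $I_\omega(u)\le0$; note that $\widetilde{S}(u)=\tf{p-2}{2p}\|u\|_p^p>0$ automatically forces $d(\omega)>0$, which is exactly what makes the rescaling strict. Were $I_\omega(u)<0$, then $t_*(u)<1$ and $\widetilde{S}(t_*(u)u)=t_*(u)^p d(\omega)<d(\omega)$, contradicting $t_*(u)u\in N_\omega$ together with the definition of $d(\omega)$ as the infimum of $S_\omega=\widetilde{S}$ on $N_\omega$; hence $I_\omega(u)=0$, and the identity then gives $S_\omega(u)=\widetilde{S}(u)=d(\omega)$. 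I expect the only delicate point to be the positivity of $Q_\omega$: everything else is algebra, but the whole fibering construction presupposes that the spectral lower bound $\ell_\alpha=\inf\sigma(H_\alpha)$ holds on the full form domain $D$ (not merely on $D(H_\alpha)$), guaranteeing $Q_\omega(v)>0$ and hence that $t_*(v)$ is well defined for every competitor.
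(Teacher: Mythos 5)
Your proof is correct and follows essentially the same route as the paper: the same fibering map $\beta(u)=t_*(v)=\bigl(Q_\omega(v)/\|v\|_p^p\bigr)^{\f{1}{p-2}}$, the same double-inequality argument for \eqref{eq-firstequiv}, and the same scaling contradiction for \eqref{eq-secondequiv}. The only (welcome) additions are that you justify explicitly the positivity $Q_\omega(v)>0$ on $D\setminus\{0\}$ via the spectral bound $Q(v)\ge\ell_\alpha\|v\|_2^2$, and that you note $d(\omega)=\widetilde{S}(u)>0$ is what makes the rescaling strictly decreasing, two points the paper uses but leaves implicit.
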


\begin{remark}
\label{rem-peq}
In view of this lemma, searching for $\delta$-NLS action minimizers is equivalent to searching for
\[
 u\in\widetilde{N}_\omega\qquad\text{such that}\qquad\widetilde{S}(u)=\inf_{v\in\widetilde{N}_\omega}\widetilde{S}(v)=d(\omega).
\]
\end{remark}

\begin{proof}[Proof of Lemma \ref{equivprob}]
We divide in proof in two parts.

\emph{Part (i): proof of \eqref{eq-firstequiv}}. On the one hand, if $u\in N_{\omega}$, then $S_{\omega}(u)=\widetilde{S}(u)$, so that
\begin{equation*}
\inf_{v\in\widetilde{N}_\omega}\widetilde{S}(v)\le d(\omega),
\end{equation*}
as $\widetilde{N}_\omega\supset N_\omega$. On the other hand, fix $u\in D\setminus\{0\}$ such that $I_{\omega}(u)<0$ (i.e. $u\in \widetilde{N}_\omega\setminus N_\omega$). Now, for any fixed $\beta>0$
\begin{equation*}
I_{\omega}(\beta u)=\beta^{2}Q_\omega(u)-\beta^{p}\|u\|_{p}^{p},
\end{equation*}
(see \eqref{Qomega} for the definition of $Q_\omega$), and thus $I_{\omega}(\beta u)=0$ (i.e. $u\in N_\omega$) if and only
\begin{equation*}
\beta=\beta(u):=\bigg(\f{Q_\omega(u)}{\|u\|_{p}^{p}}\bigg)^{\f{1}{p-2}}
\end{equation*}
(where we also used that $Q_\omega(u)>0$, for every $u\in D\setminus\{0\}$, whenever $\omega>\omega_0$). Moreover, since $I_{\omega}(u)<0$, $\beta(u)<1$ and hence
\begin{equation*}
S_{\omega}(\beta(u)u)=\widetilde{S}(\beta(u)u)=\beta(u)^{p}\widetilde{S}(u)<\widetilde{S}(u).
\end{equation*}
As a consequence
\begin{equation*}
 d(\omega)\le\inf_{v\in\widetilde{N}_\omega}\widetilde{S}(v),
\end{equation*}
which completes the proof.

\emph{Part (ii): proof of \eqref{eq-secondequiv}}. If $u\in N_{\omega}$ and $S_{\omega}(u)=d(\omega)$, then clearly $u\in\widetilde{N}_\omega$ and (by \eqref{eq-funceq}) $\widetilde{S}(u)=d(\omega)$. On the contrary, assume by contradiction that $u\in \widetilde{N}_\omega\setminus N_\omega$. If $\widetilde{S}(u)=d(\omega)$, then, arguing as before, one obtains that $\beta(u)u\in N_\omega$ and
\begin{equation*}
S_{\omega}(\beta(u)u)<d(\omega),
\end{equation*}
which is impossible. Hence, if $u\in\widetilde{N}_\omega$ and $\widetilde{S}(u)=d(\omega)$, then $u\in N_{\omega}$ and $S_{\omega}(u)=d(\omega)$.
\end{proof}

We can now prove that the left inequality of \eqref{0dwd0w} is strict.

\begin{proposition}
\label{dpos}
Let $p>2$, $\alpha\in\R$. Then, $d(\omega)>0$, for every  $\omega>\omega_{0}$.
\end{proposition}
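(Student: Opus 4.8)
The plan is to reduce the claim to a uniform lower bound on $\|v\|_p^p$ over the Nehari manifold. Recall that, by \eqref{eq-funceq}, $S_\omega(v)=\widetilde{S}(v)=\tf{p-2}{2p}\|v\|_p^p$ for every $v\in N_\omega$, so that $d(\omega)=\tf{p-2}{2p}\inf_{v\in N_\omega}\|v\|_p^p$; moreover, on $N_\omega$ one has $I_\omega(v)=0$, i.e. (by \eqref{eq-utile}) $Q_\omega(v)=\|v\|_p^p$. Hence it suffices to exhibit a constant $c>0$, depending only on $p,\alpha,\omega$, with $Q_\omega(v)\geq c$ for every $v\in N_\omega$.

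The crucial observation is that for $\omega>\omega_0$ the choice of decomposition parameter $\la=\omega$ makes $Q_\omega$ a sum of nonnegative terms with strictly positive coefficients. Indeed, by \eqref{eq-Inehari} with $\la=\omega$,
\[
Q_\omega(v)=\|\na\phi_\omega\|_2^2+\omega\|\phi_\omega\|_2^2+(\alpha+\theta_\omega)|q|^2,
\]
and a direct computation using \eqref{eq-thla} and $\omega_0=-\ell_\alpha=4e^{-4\pi\alpha-2\gamma}$ shows that $\alpha+\theta_\omega>0$ precisely when $\omega>\omega_0$. Consequently each summand is bounded by $Q_\omega(v)$, namely $\|\na\phi_\omega\|_2^2\leq Q_\omega(v)$, $\|\phi_\omega\|_2^2\leq\omega^{-1}Q_\omega(v)$ and $|q|^2\leq(\alpha+\theta_\omega)^{-1}Q_\omega(v)$. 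Feeding these into the extended Gagliardo--Nirenberg inequality \eqref{GND} (again with $\la=\omega$) and estimating term by term, I would obtain
\[
\|v\|_p^p\leq K_p\Big(\|\na\phi_\omega\|_2^{p-2}\|\phi_\omega\|_2^2+\tf{|q|^p}{\omega}\Big)\leq C_\omega\, Q_\omega(v)^{\f p2},
\]
for a constant $C_\omega:=K_p\,\omega^{-1}\big(1+(\alpha+\theta_\omega)^{-p/2}\big)>0$ independent of $v$.

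To close the argument I would combine the last display with the Nehari identity $Q_\omega(v)=\|v\|_p^p$, getting $Q_\omega(v)\leq C_\omega\,Q_\omega(v)^{p/2}$. Since $v\neq0$ forces $Q_\omega(v)>0$ (positivity of $Q_\omega$ for $\omega>\omega_0$, already invoked in the proof of Lemma \ref{equivprob}) and $p>2$, this self-improving inequality yields $Q_\omega(v)\geq C_\omega^{-2/(p-2)}=:c>0$ uniformly on $N_\omega$. Therefore $\|v\|_p^p=Q_\omega(v)\geq c$ for every $v\in N_\omega$, whence $d(\omega)=\tf{p-2}{2p}\inf_{N_\omega}\|v\|_p^p\geq\tf{p-2}{2p}\,c>0$. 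I expect the only delicate point to be securing the \emph{uniformity} of $C_\omega$: this is exactly what the choice $\la=\omega$ buys, as it pins the positivity of $\alpha+\theta_\omega$ to the sharp threshold $\omega>\omega_0$ and lets every term of the quadratic form be controlled by $Q_\omega$ itself, converting the one-sided Gagliardo--Nirenberg bound into a closed estimate. Everything else is the routine manipulation of $Q_\omega\leq C_\omega Q_\omega^{p/2}$ in the subcritical exponent range $p>2$.
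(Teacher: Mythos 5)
Your proof is correct, and it establishes exactly the stated proposition. The skeleton is the same as in the paper: for $\omega>\omega_0$ the form $Q_\omega$ is coercive, and coupling a Gagliardo--Nirenberg-type upper bound on $\|v\|_p^p$ with the Nehari constraint produces a uniform lower bound on $\|v\|_p^p$, hence on $\widetilde{S}$. But your implementation differs in three ways that make it shorter and cleaner. (1) You take the decomposition parameter $\lambda=\omega$, so the term $(\omega-\lambda)\|v\|_2^2$ in \eqref{eq-Inehari} disappears and $Q_\omega(v)$ becomes a sum of three nonnegative terms whose coefficients are positive precisely when $\omega>\omega_0$; the paper instead fixes $\lambda\in(\omega_0,\omega)$ and works with four positive coefficients (its coercivity estimate \eqref{Lpartest}). (2) You treat all of $D$ in one stroke, since $q=0$ is permitted in \eqref{GND}, whereas the paper splits into $v\in H^1(\Rd)$ (handled via the Sobolev embedding) and $v\in D\setminus H^1(\Rd)$. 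Your route through Proposition \ref{GNsing}, bounding each factor by the appropriate power of $Q_\omega(v)$, also has the correct scaling built in; by contrast, the paper's $H^1$ step as literally written ($\|\nabla u\|_2^2\geq C_p\|u\|_p^2$) is not scale-invariant in dimension two and must be read as the Sobolev bound for the full norm $\|\nabla u\|_2^2+\omega\|u\|_2^2$. (3) You work on $N_\omega$ with the original definition \eqref{eq-infact} of $d(\omega)$, while the paper proves the lower bound on the larger set $\widetilde{N}_\omega$ of Lemma \ref{equivprob}; this costs nothing, since your argument extends verbatim to $I_\omega(v)\leq 0$ (then $Q_\omega(v)\leq\|v\|_p^p\leq C_\omega Q_\omega(v)^{p/2}$ yields the same conclusion), so every later use in Section \ref{sec-am} remains covered. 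The net gain of your version is a unified argument with an explicit constant, $d(\omega)\geq\tfrac{p-2}{2p}C_\omega^{-2/(p-2)}$; what the paper's version buys is making the role of the Sobolev embedding on the regular component explicit, at the price of the case distinction.
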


\begin{proof}
First, let $u\in \widetilde{N}_\omega\cap H^1(\Rd)$. By Sobolev inequality, for any $p\in(1,+\infty)$ there exists $C_p>0$, depending only on $p$, such that
\begin{equation*}
0\ge I_{\omega}(u)=\|\na u\|_{2}^{2}+\omega\|u\|_{2}^{2}-\|u\|_{p}^{p}\ge C_p\|u\|_{p}^{2}+\omega\|u\|_{2}^{2}-\|u\|_{p}^{p}\ge C_p\|u\|_{p}^{2}-\|u\|_{p}^{p}.
\end{equation*}
Hence, $\|u\|_{p}^{p-2}\ge C_p$ and so
\begin{equation}
\label{Spos}
\widetilde{S}(u)\ge \f{p-2}{2p} C_p^{\f{p}{p-2}},
\end{equation}
whence
\begin{equation}
\label{eq-infH1}
 \inf_{v\in\widetilde{N}_\omega\cap H^1(\Rd)}\widetilde{S}(v)\ge\f{p-2}{2p} C_p^{\f{p}{p-2}}>0
\end{equation}
Consider now a function $u=\phi_\la+q\G_{\la}\in \widetilde{N}_\omega\setminus H^1(\Rd)$ (so that $q\neq 0$) and fix $\la\in(\omega_{0},\omega)$. Clearly $\left(\alpha+\theta_\la\right)>0$, and thus there exists a constant $C>0$ such that
\begin{equation}
\label{Lpartest}
\|\na\phi_{\la}\|_{2}^{2}+\la\|\phi_{\la}\|_{2}^{2}+(\omega-\la)\|u\|_{2}^{2}+|q|^{2}\left(\alpha+\theta_\la\right)\ge C\left(\|\phi_{\la}\|_{H^{1}}^{2}+|q|^{2}\right).
\end{equation}
Moreover, by Sobolev inequality we have that
\begin{equation*}
\|u\|_{p}^{p}\le C_{p}\left(\|\phi_{\la}\|_{p}^{p}+|q|^{p}\right)\le C_{p}\left(\|\phi_{\la}\|_{H^{1}}^{p}+|q|^{p}\right)\le C_{p}\left(\|\phi_{\la}\|_{H^{1}}^{2}+|q|^{2}\right)^{\f{p}{2}},
\end{equation*}
which implies
\begin{equation}
\label{up2}
\|\phi_{\la}\|_{H^{1}}^{2}+|q|^{2}\ge\f{1}{C_{p}}\|u\|_{p}^{2}.
\end{equation}
Then, combining \eqref{Lpartest} and \eqref{up2},
\begin{equation*}
0\ge I_{\omega}(u)\ge C\left(\|\phi_{\la}\|_{H^{1}}^{2}+|q|^{2}\right)-\|u\|_{p}^{p}\ge \f{C}{C_{p}}\|u\|_{p}^{2}-\|u\|_{p}^{p}
\end{equation*}
and so, arguing as before, there exists $K_p>0$, depending only on $p$, such that
\begin{equation*}
\widetilde{S}(u)\ge K_p
\end{equation*}
and, consequently,
\begin{equation}
\label{infD}
\inf_{v\in\widetilde{N}_\omega\setminus H^1(\Rd)}\widetilde{S}(v)\ge K_p>0.
\end{equation}
Finally, combining \eqref{eq-infH1} and \eqref{infD}, we obtain the claim.
\end{proof}

For what concerns the right inequality in \eqref{0dwd0w}, we need to recall preliminarily some of the main properties of the NLS action minimizers at frequency $\omega$, that is functions $u\in N_\omega^0$ such that $S_{\omega}^{0}(u)=d^{0}(\omega)$ (see Theorem 8.1.5 in \cite{C-CL03}).

\begin{proposition}
\label{exuniqS0}
Let $p>2$ and $\omega>0$. Then, there exists at least an NLS action minimizer at frequency $\omega$. In particular, such minimizer $u$ is unique, positive and radially symmetric decreasing, up to gauge and translations invariances.
\end{proposition}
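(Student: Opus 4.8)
The plan is to carry out, in the flat space $H^1(\Rd)$, the same Nehari--manifold reduction and rearrangement scheme used for the defect problem, and then to invoke classical ODE uniqueness for the last claim. For the existence part, I would first note that the NLS counterparts of Lemma \ref{equivprob} hold with identical proofs: since $\om>0$ makes $Q_\om^0(v):=\|\na v\|_2^2+\om\|v\|_2^2$ a norm equivalent to the $H^1(\Rd)$ norm, the quantity $Q_\om^0(v)$ is positive for $v\neq0$, the map $\be\mapsto I_\om^0(\be v)=\be^2Q_\om^0(v)-\be^p\|v\|_p^p$ vanishes at a unique $\be(v)>0$, and consequently
\[
 d^0(\om)=\inf_{v\in\widetilde{N}_\om^0}\widetilde{S}(v),\qquad\widetilde{N}_\om^0:=\{v\in H^1(\Rd)\setminus\{0\}:I_\om^0(v)\le0\},
\]
together with the NLS analogue of \eqref{eq-secondequiv}. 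The argument of Proposition \ref{dpos} restricted to $H^1(\Rd)$ gives $d^0(\om)>0$. This reduces the problem to minimizing $\widetilde{S}(v)=\tf{p-2}{2p}\|v\|_p^p$ under the constraint $I_\om^0(v)\le0$.

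I would then take a minimizing sequence $(v_n)_n\subset\widetilde{N}_\om^0$ and replace each $v_n$ by its symmetric decreasing rearrangement $v_n^*$: by \eqref{equimeas} and the P\'olya--Szeg\H{o} inequality \eqref{PS} this preserves $\|v_n\|_2$ and $\|v_n\|_p$ and lowers $\|\na v_n\|_2$, so that $I_\om^0(v_n^*)\le0$ and $\widetilde{S}(v_n^*)=\widetilde{S}(v_n)$, and one may assume the $v_n$ radial and nonincreasing. The crucial gain is compactness: the Strauss lemma yields the compact embedding $H^1_{\mathrm{rad}}(\Rd)\hookrightarrow L^p(\Rd)$ for every $p\in(2,\infty)$. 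Since $\widetilde{S}(v_n)\to d^0(\om)>0$ forces $\|v_n\|_p^p\to\tf{2p}{p-2}d^0(\om)>0$, while $I_\om^0(v_n)\le0$ gives $Q_\om^0(v_n)\le\|v_n\|_p^p$ and hence an $H^1(\Rd)$ bound, I extract $v_n\deb u$ in $H^1(\Rd)$ and, by the compact embedding, $v_n\to u$ in $L^p(\Rd)$. The latter yields $u\neq0$ and $\widetilde{S}(u)=d^0(\om)$, weak lower semicontinuity gives $I_\om^0(u)\le0$, and the NLS analogue of \eqref{eq-secondequiv} upgrades $u$ to a genuine action minimizer with $I_\om^0(u)=0$. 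It is radial and nonincreasing by construction, and replacing $u$ by $|u|$ (again via \eqref{PS}) lets me take $u\ge0$; elliptic regularity on the Euler--Lagrange equation $-\lap u+\om u=u^{p-1}$ makes $u$ smooth, and the strong maximum principle makes it strictly positive, hence strictly decreasing.

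The \textbf{uniqueness} (up to translation and a constant phase) is the genuinely hard and non--variational step. After a gauge rotation I reduce to real positive minimizers, and the rescaling $u(\x)=\om^{1/(p-2)}w(\sqrt{\om}\,\x)$ turns $-\lap u+\om u=u^{p-1}$ into the normalized profile equation $-\lap w+w=w^{p-1}$, so it suffices to settle $\om=1$. Every positive minimizer is, up to translation, radial and decreasing (by the rearrangement step above, or by the moving--plane method of \cite{GNN-79}), hence a positive decaying solution of the ODE $w''+\tf{1}{r}w'-w+w^{p-1}=0$ with $w'(0)=0$. The uniqueness of such a solution is exactly Kwong's theorem, established through a shooting argument and a delicate monotonicity analysis of the solution's dependence on its value at the origin; this ODE analysis is the main obstacle and the part I would cite rather than reproduce, precisely as the statement does by referring to \cite[Theorem 8.1.5]{C-CL03}. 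Undoing the scaling and translating back then yields uniqueness up to the stated symmetries.
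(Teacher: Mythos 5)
The paper does not prove this proposition at all: it is recalled as a classical fact, with a pointer to \cite[Theorem 8.1.5]{C-CL03}, and the uniqueness ultimately rests on Kwong's ODE theorem, exactly as you identify. So there is no internal proof to compare against; what you have done is reconstruct the standard argument, and your reconstruction is essentially sound. It is also pleasantly consistent with the paper's own philosophy: your existence step is the flat-space version of the machinery the paper builds for the singular problem (the reduction of Lemma \ref{equivprob}, the positivity of the infimum as in Proposition \ref{dpos}, and rearrangements), with the extra ingredient --- unavailable in the defect setting, which is why the paper needs Proposition \ref{fgtheor} instead --- that radial symmetrization restores compactness via the Strauss embedding $H^1_{\mathrm{rad}}(\Rd)\hookrightarrow L^p(\Rd)$, $2<p<\infty$. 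Two small points deserve repair. First, passing from $u$ to $|u|$ is not an application of \eqref{PS}: P\'olya--Szeg\H{o} concerns rearrangements, whereas the inequality you need is the diamagnetic (Kato) inequality $|\na|u||\le|\na u|$ a.e. Second, for the uniqueness claim ``up to gauge'' you must show that \emph{every} complex minimizer has the form $e^{i\theta}v$ with $v>0$, not merely that one such minimizer exists; this requires the equality case of the diamagnetic inequality (if $\|\na|u|\|_2<\|\na u\|_2$ then $I^0_\omega(|u|)<0$ and rescaling onto the Nehari manifold strictly lowers $\widetilde S$, a contradiction; equality plus $|u|>0$ from the maximum principle then forces a constant phase). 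With those two standard repairs, and with the moving-plane result of \cite{GNN-79} and Kwong's theorem cited for the symmetry of arbitrary positive solutions and for the ODE uniqueness, your outline is a complete and correct proof of the statement.
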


Then, we can prove that also the right inequality of \eqref{0dwd0w} is strict.

\begin{proposition}
\label{comparinf}
Let $p>2$, $\alpha\in\R$. Then, $d(\omega)<d^{0}(\omega)$, for every  $\omega>\omega_{0}$.
\end{proposition}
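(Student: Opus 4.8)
The plan is to reduce everything to a quotient comparison via the scaling identity already exploited in the proof of Lemma~\ref{equivprob}. Since $\omega>\omega_0$, one has $Q_\omega(v)>0$ for every $v\in D\setminus\{0\}$, so the rescaled function $\beta v$ with $\beta=\big(Q_\omega(v)/\|v\|_p^p\big)^{\f{1}{p-2}}$ lies in $N_\omega$ and
\[
 S_\omega(\beta v)=\widetilde S(\beta v)=\f{p-2}{2p}\left(\f{Q_\omega(v)}{\|v\|_p^2}\right)^{\f{p}{p-2}}.
\]
Consequently $d(\omega)\le \f{p-2}{2p}\big(Q_\omega(v)/\|v\|_p^2\big)^{p/(p-2)}$ for every such $v$, and it suffices to exhibit a single $v\in D$ with $Q_\omega(v)/\|v\|_p^2<Q^0_\omega(u)/\|u\|_p^2$, where $u$ is the positive, radially symmetric, decreasing NLS action minimizer at frequency $\omega$ furnished by Proposition~\ref{exuniqS0} and $Q^0_\omega(w):=\|\na w\|_2^2+\omega\|w\|_2^2$. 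Indeed, $u\in N^0_\omega$ gives $Q^0_\omega(u)=\|u\|_p^p$, whence $d^0(\omega)=\widetilde S(u)=\f{p-2}{2p}\big(Q^0_\omega(u)/\|u\|_p^2\big)^{p/(p-2)}$; since $x\mapsto x^{p/(p-2)}$ is increasing, the strict quotient inequality transfers at once to $d(\omega)<d^0(\omega)$.

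To produce such a $v$, I would perturb the soliton by a small singular part. Fix any $\lambda>\omega$ and set $v_\ep:=u+\ep\G_\lambda$ with $\ep>0$ small; this has regular part $\phi_\lambda=u\in H^1(\Rd)$ and charge $q=\ep\neq0$, so $v_\ep\in D\setminus H^1(\Rd)$. Expanding $Q_\omega(v_\ep)$ according to this decomposition through \eqref{eq-Q} and \eqref{Qomega}, and using $\|v_\ep\|_2^2=\|u\|_2^2+2\ep\scal{u}{\G_\lambda}+\ep^2\|\G_\lambda\|_2^2$, gives
\[
 Q_\omega(v_\ep)=Q^0_\omega(u)+2\ep(\omega-\lambda)\scal{u}{\G_\lambda}+\ep^2\big[(\omega-\lambda)\|\G_\lambda\|_2^2+(\alpha+\theta_\lambda)\big].
\]
Because $\lambda>\omega$ and both $u$ and $\G_\lambda$ are positive, the linear coefficient $2(\omega-\lambda)\scal{u}{\G_\lambda}$ is strictly negative (note $\scal{u}{\G_\lambda}$ is finite and positive by Cauchy--Schwarz, $u,\G_\lambda\in L^2(\Rd)$), so $Q_\omega(v_\ep)<Q^0_\omega(u)$ for all sufficiently small $\ep>0$. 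Simultaneously, since $u\ge0$, $\G_\lambda>0$ and $\ep>0$ we have $v_\ep>u$ pointwise, hence $\|v_\ep\|_p>\|u\|_p$. Thus the numerator strictly decreases while the denominator strictly increases, yielding $Q_\omega(v_\ep)/\|v_\ep\|_p^2<Q^0_\omega(u)/\|u\|_p^2$ for small $\ep>0$, exactly as required.

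I do not foresee a genuine obstacle: the statement is the qualitative fact that opening the singular channel strictly lowers the optimal Weinstein-type quotient, and the expansion above makes this quantitative. The only delicate point is selecting the perturbation direction, namely $\lambda>\omega$ together with $\ep>0$, so that the linear term in $\ep$ carries the correct (negative) sign while the $L^p$-norm increases monotonically; with the opposite choices these two effects would compete. It also remains to record the trivial facts that $Q_\omega(v_\ep)>0$ for small $\ep$ (so the rescaling $\beta v_\ep$ is well defined) and that $v_\ep\in D\setminus\{0\}$, both immediate, after which the chain $d(\omega)\le S_\omega(\beta v_\ep)=\widetilde S(\beta v_\ep)<d^0(\omega)$ closes the argument.
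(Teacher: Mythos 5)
Your proof is correct, and it takes a genuinely different route from the paper's. The paper argues indirectly: it takes the positive NLS action minimizer $u$ from Proposition \ref{exuniqS0}, observes that $N_\omega^0\subset N_\omega$ already gives $d(\omega)\le d^0(\omega)=S_\omega(u)$, and excludes equality because equality would make $u$ itself a $\delta$-NLS action minimizer, hence (by Appendix \ref{app-gbstates}) a bound state satisfying the boundary condition $\phi_\la(\z)=(\alpha+\theta_\la)q$; since $u\in H^1(\Rd)$ forces $q=0$, this would yield $u(\z)=0$, contradicting positivity (the same mechanism as in Proposition \ref{Eps<Eps0}). You instead argue constructively: you perturb the same soliton by a small singular piece, $v_\ep=u+\ep\G_\la$ with $\la>\omega$ and $\ep>0$, expand $Q_\omega(v_\ep)=Q^0_\omega(u)+2\ep(\omega-\la)\scal{u}{\G_\la}+O(\ep^2)$ so that the numerator of the Weinstein-type quotient strictly decreases while $\|v_\ep\|_p$ strictly increases, and then project onto $N_\omega$ via the scaling already used in the proof of Lemma \ref{equivprob} to convert the strict quotient inequality into $d(\omega)<d^0(\omega)$. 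All the ingredients you invoke check out: $Q_\omega>0$ on $D\setminus\{0\}$ for $\omega>\omega_0$ (used by the paper in Lemma \ref{equivprob}), the identity $S_\omega(\beta v)=\tf{p-2}{2p}\big(Q_\omega(v)/\|v\|_p^2\big)^{p/(p-2)}$, the validity of the decomposition of $v_\ep$ with regular part $u$ and charge $\ep$, and the sign of the linear term, which indeed hinges on the choices $\la>\omega$ and $\ep>0$. As for what each approach buys: the paper's argument is shorter but leans on the Euler--Lagrange/bound-state characterization of action minimizers and on the regularity theory of Appendix \ref{app-gbstates}; yours avoids that machinery entirely, quantifies the gap (linear in $\ep$), and anticipates precisely the variational mechanism the paper deploys later for the qualitative properties (Remark \ref{no-rearr-gs}, Propositions \ref{realposgs} and \ref{minsymm}), namely lowering $Q_\omega$ while raising the $L^p$ norm and rescaling back to the Nehari manifold.
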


\begin{proof}
For a fixed $\omega>\omega_0$, let $u$ be the unique positive NLS action minimizer at frequency $\omega$ provided by Proposition \ref{exuniqS0}. Then, $u$ cannot be also a $\delta$-NLS action minimizer at frequency $\omega$. Indeed, if $u$ were a $\delta$-NLS action minimizer at frequency $\omega$, then $u$ would have to satisfy \eqref{eq-regbound} and, in particular, $\phi_\la(\z)=(\alpha+\theta_\la)q$, but this can be proved to be a contradiction with the positivity of $u$ by arguing as in the proof of Proposition \ref{Eps<Eps0}. Hence, there exists $v\in N_{\omega}\setminus H^1(\Rd)$ such that $S_{\omega}(v)<S_{\omega}(u)=d^{0}(\omega)$, which concludes the proof.
\end{proof}

Finally, we have all the tools to prove the existence part of Theorem {\ref{exchar-actmin}}. 

\begin{proof}[Proof of Theorem \ref{exchar-actmin}-(i)]
The case $\omega\leq\omega_0$ has been already proved by the remarks at the beginning of the section and by Corollary \ref{noexistlowfreq}. On the contrary, it is convenient to divide the proof of the case $\omega>\omega_0$ in four steps. We also note that, as in the proof of point (i) of Theorem \ref{exchar-gs}, many of the following limits has to be meant as valid up
to subsequences. We do not repeat it for the sake of simplicity and since this does not give rise to misunderstandings.

\emph{Step 1: weak convergence of the minimizing sequences.} Fix $\omega>\omega_0$ and let $(u_{n})_{n}$ be a \emph{$\delta$-NLS action minimizing sequence at frequency $\omega$}, that is (by Remark \ref{rem-peq}) $(u_{n})_n\subset \widetilde{N}_\omega$ and $\widetilde{S}(u_{n})\to d(\omega)$, as $n\to +\infty$. In addition, for any fixed $\la>0$ we can use for $u_{n}$ the decomposition $u_{n}=\phi_{n,\la}+q_{n}\G_{\la}$. First, we see that, since $\|u_{n}\|_{p}^{p}\to \f{2p}{p-2}d(\omega)$, $(u_{n})_{n}$ is bounded in $L^p(\Rd)$. Moreover, as $I_{\omega}(u_{n})\le 0$, we get
\begin{equation*}
\|\na \phi_{n,\la}\|_{2}^{2}+\la\|\phi_{n,\la}\|_{2}^{2}+(\omega-\la)\|u_{n}\|_{2}^{2}+\left(\alpha+\theta_\la\right)|q_{n}|^{2}\le \|u_{n}\|_{p}^{p}.
\end{equation*}
Now, if one sets $\la=\f{\omega+\omega_{0}}{2}$, then the three constants in front of $\|\phi_{n,\la}\|_{2}^{2}$, $\|u_{n}\|_{2}^{2}$ and $|q_{n}|^{2}$ are all strictly positive. Hence, $(\na \phi_{n,\la})_n$,  $(\phi_{n,\la})_n$ and $(u_{n})_n$ are bounded in $L^2(\Rd)$ and $(q_{n})_n$ is bounded in $\C$. Thus, there exists $\phi_{\la}\in H^{1}(\Rd)$, $q\in \C$ and $u\in D$ such that $u=\phi_{\la}+q\G_{\la}$ and
\begin{equation*}
\nabla\phi_{n,\la}\deb\nabla\phi_{\la},\quad\phi_{n,\la}\deb\phi_{\la}\quad u_{n}\deb u \quad\text{in}\quad L^{2}(\Rd)\qquad \text{and} \qquad q_{n}\to q\quad\text{in}\quad\C.
\end{equation*}

\emph{Step 2: $u\in D\setminus H^{1}(\Rd)$.} Assume, by contradiction, that $u\in H^{1}(\Rd)$, namely that $q=0$, and define the sequence $w_{n}:=\sigma_{n}\phi_{n,\la} \in H^{1}(\Rd)$, with \begin{equation*}
\sigma_{n}:=\left(1+\f{I_{\omega}(u_{n})-\left(\alpha+\theta_\la\right)|q_{n}|^{2}+(\|u_{n}\|_{p}^{p}-\|\phi_{n,\la}\|_{p}^{p})+(\omega-\la)(\|\phi_{n,\la}\|_{2}^{2}-\|u_{n}\|_{2}^{2})}{\|\phi_{n,\la}\|_{p}^{p}}\right)^{\f{1}{p-2}},
\end{equation*}
so that $I^{0}_{\omega}(\sigma_{n}\phi_{n,\la})=0$. Note that $\sigma_{n}$ is well defined since there exists $C>0$ such that $\|\phi_{n,\la}\|_{p}^{p}\ge C$ for every $n\in \N$. Indeed, by Proposition \ref{dpos}, $\|u_{n}\|_{p}^{p}$ is uniformly bounded away from zero and $q_{n}\to 0$. On the other hand, since $|q_{n}|^{2}\to 0$, it follows that both $\|\phi_{n,\la}\|_{2}^{2}-\|u_{n}\|_{2}^{2}\to 0$ and
\begin{equation*}
\left|\|u_{n}\|_{p}^{p}-\|\phi_{n,\la}\|_{p}^{p}\right|\le C_{1}\left|\|u_{n}\|_{p}-\|\phi_{n,\la}\|_{p}\right|\le C_{2}\|u_{n}-\phi_{n,\la}\|_{p}\to 0.
\end{equation*}
As a consequence, since $I_\omega(u_n)\le0$, $(\sigma_n^p)_n$ is bounded from above by a sequence $(a_n)_n$ converging to 1. Thus, as $I^{0}_{\omega}(w_{n})=0$ and $\widetilde{S}(u_{n})\to d(\omega)$,
\begin{equation*}
d^0(\omega)+o(1)=\widetilde{S}(w_{n})=\sigma_{n}^{p}\widetilde{S}\left(\phi_{n,\la}\right)\leq a_{n}\left(\widetilde{S}(u_{n})+o(1)\right)=\widetilde{S}(u_{n})+o(1)=d(\omega)+o(1),
\end{equation*}
that implies that $d(\omega)\ge d^{0}(\omega)$, which contradicts Proposition \ref{comparinf}. 

\emph{Step 3: $u\in \widetilde{N}_\omega$.} In view of Step 2, it is left to prove that $I_{\omega}(u)\le 0$. Assume by contradiction that $I_{\omega}(u)>0$. From boundedness of $\phi_{n,\la}$ in $H^1(\Rd)$ and $q_n\to q$, one sees that $u_n\to u$ in $L^p_{loc}(\Rd)$ and hence $u_n\to u$ a.e. in $\Rd$. As $(u_n)_n$ is bounded in $L^p(\Rd)$, one can use Brezis-Lieb lemma to get $\|u_{n}\|_{p}^{p}-\|u_{n}-u\|_{p}^{p}-\|u\|_{p}^{p}\to 0$, and thus
\begin{equation}
\label{eq-Sbrezis}
\widetilde{S}(u_{n})-\widetilde{S}(u_{n}-u)-\widetilde{S}(u)\to 0.
\end{equation}
Since, in addition, $q_{n}\to q$, $\na\phi_{n,\la}\deb\na\phi_{\la}$, $\phi_{n,\la}\deb\phi_{\la}$ and $u_{n}\deb u$ in $L^{2}(\Rd)$ and $Q_\omega$ is quadratic, one can also check that
\begin{equation}
\label{eq-brexisI}
I_{\omega}(u_{n})-I_{\omega}(u_{n}-u)-I_{\omega}(u)\to 0.
\end{equation}
Let us prove now that $I_{\omega}(u_{n})\to 0$. Assume by contradiction that $I_{\omega}(u_{n})\not\to 0$. As $\|u_{n}\|_{p}^{p}\le C$, for some $C>0$,
\begin{equation*}
-C\le I_{\omega}(u_{n})\le 0.
\end{equation*}
Hence, without loss of generality, we can suppose that $I_{\omega}(u_{n})\to -\beta$, with $\beta>0$. Consider, then, the sequence $v_{n}:=\theta_{n}u_{n}$, with 
\begin{equation*}
\theta_{n}:=\left(1+\f{I_{\omega}(u_{n})}{\|u_{n}\|_{p}^{p}}\right)^{\f{1}{p-2}},
\end{equation*}
so that $I_{\omega}(v_{n})=0$. Thus, an easy computation shows that
\begin{equation*}
\theta_{n}\to l:= \left(1-\f{\beta (p-2)}{2 p d(\omega)}\right)^{\f{1}{p-2}}<1.
\end{equation*}
As a consequence,
\begin{equation*}
\widetilde{S}(v_{n})=\widetilde{S}(\theta_{n}u_{n})=\theta_{n}^{p}\widetilde{S}(u_{n})\to l^{p}d(\omega)<d(\omega),
\end{equation*}
which is a contradiction. Hence $I_{\omega}(u_{n})\to 0$. Finally, looking back at \eqref{eq-brexisI}, since $I_{\omega}(u)>0$ and $I_{\omega}(u_{n})\to 0$, 
\begin{equation*}
I_{\omega}(u_{n}-u)=I_{\omega}(u_{n})-I_{\omega}(u)+o(1)=-I_{\omega}(u)+o(1),
\end{equation*}
entailing that $I_{\omega}(u_{n}-u)\to-I_{\omega}(u)<0$. Choose, then, $\bar{n}$ such that $I_{\omega}(u_{n}-u)<0$ for every $n\ge\bar{n}$. Since $d(\omega)\le \widetilde{S}(u_{n}-u)$ and $\widetilde{S}(u)>0$, \eqref{eq-Sbrezis} yields
\begin{equation*}
d(\omega)\leq\lim_{n}\widetilde{S}(u_{n}-u)= d(\omega)-\widetilde{S}(u)<d(\omega),
\end{equation*}
which is again a contradiction and entails $I_\omega(u)\leq0$.

\emph{Step 4: conclusion.} As boundedness in $L^p(\Rd)$ entails that $u_n\deb u $ in $L^p(\Rd)$, by weak lower semicontinuity
\begin{equation*}
\widetilde{S}(u)\le\liminf_{n\to+\infty} \widetilde{S}(u_{n})=d(\omega),
\end{equation*}
which concludes the proof.
 \end{proof}
 
%%%%%%%%%%%%%%%%%%%%%%%%%%%%%%%%%%%%%%%%%%%%%%%%%%%%%%%%%%%%%%%%%%%%%%%%%%%%%%%%
%%%%%%%%%%%%%%%%%%%%%%%%%%%%%%%%%%%%%%%%%%%%%%%%%%%%%%%%%%%%%%%%%%%%%%%%%%%%%%%%
%%%%%%%%%%%%%%%%%%%%%%%%%%%%%%%%%%%%%%%%%%%%%%%%%%%%%%%%%%%%%%%%%%%%%%%%%%%%%%%%
%%%%%%%%%%%%%%%%%%%%%%%%%%%%%%%%%%%%%%%%%%%%%%%%%%%%%%%%%%%%%%%%%%%%%%%%%%%%%%%%
%%%%%%%%%%%%%%%%%%%%%%%%%%%%%%%%%%%%%%%%%%%%%%%%%%%%%%%%%%%%%%%%%%%%%%%%%%%%%%%%
%%%%%%%%%%%%%%%%%%%%%%%%%%%%%%%%%%%%%%%%%%%%%%%%%%%%%%%%%%%%%%%%%%%%%%%%%%%%%%%%
%%%%%%%%%%%%%%%%%%%%%%%%%%%%%%%%%%%%%%%%%%%%%%%%%%%%%%%%%%%%%%%%%%%%%%%%%%%%%%%%
%%%%%%%%%%%%%%%%%%%%%%%%%%%%%%%%%%%%%%%%%%%%%%%%%%%%%%%%%%%%%%%%%%%%%%%%%%%%%%%%
%%%%%%%%%%%%%%%%%%%%%%%%%%%%%%%%%%%%%%%%%%%%%%%%%%%%%%%%%%%%%%%%%%%%%%%%%%%%%%%%
%%%%%%%%%%%%%%%%%%%%%%%%%%%%%%%%%%%%%%%%%%%%%%%%%%%%%%%%%%%%%%%%%%%%%%%%%%%%%%%%
%%%%%%%%%%%%%%%%%%%%%%%%%%%%%%%%%%%%%%%%%%%%%%%%%%%%%%%%%%%%%%%%%%%%%%%%%%%%%%%%
%%%%%%%%%%%%%%%%%%%%%%%%%%%%%%%%%%%%%%%%%%%%%%%%%%%%%%%%%%%%%%%%%%%%%%%%%%%%%%%%

%%%%%%%%%%%%%%%%%%%%%%%%%%%%%%%%%%
%%%%%%% Characterization %%%%%%%%%
%%%%%%%%%%%%%%%%%%%%%%%%%%%%%%%%%%

\section{Further properties: proof of point (ii) of Theorems \ref{exchar-gs} and \ref{exchar-actmin}}
\label{sec-char}

In this section we prove point (ii) in Theorem \ref{exchar-gs} and Theorem \ref{exchar-actmin}, that concern the features of $\delta$-NLS ground states and $\delta$-NLS action minimizers. We point out that, by Lemma \ref{chargs}, proving Theorem \ref{exchar-actmin} implies the conclusion of Theorem \ref{exchar-gs}.

Before proving (ii) of Theorem \ref{exchar-actmin}, let us give an informal description of the strategy.
First, we establish that ground states minimize the functional $Q_\omega$ defined in \eqref{Qomega} on the constraint
\begin{equation}
\label{DComega}
D_{\omega}^p:=\left\{v\in D:\|v\|_{p}^{p}=\f{2p}{p-2}d(\omega)\right\}.
\end{equation}
Second, given a minimizer $u$ of such a problem without the required property (i.e., positivity and radially symmetric monotonicity), we exhibit through rearrangement a function $\widetilde{u}$ such that  
\begin{equation}
\label{Step1}
\|\widetilde{u}\|_p> \|u\|_p\qquad\text{and}\qquad Q_{\omega}(\widetilde{u})\le Q_\omega(u).
\end{equation} 
Moreover, noting that there exists $\beta<1$ such that 
\begin{equation}
\label{Step2}
\|\beta\widetilde{u}\|_{p}^{p}=\|u\|_{p}^{p}\quad\text{and}\quad Q_{\omega}(\beta\widetilde{u})<Q_\omega(\widetilde{u}),
\end{equation}
we find a better competitor with respect to the minimizer, and obtain a contradiction.

\begin{remark}
\label{no-rearr-gs}
Unfortunately, such a strategy is not applicable directly to the minimizers of the energy $E$ or of the action $S_\omega$. More in detail, applying to $E$ the method described above, we obtain
\begin{equation*}
\|\widetilde{u}\|_r> \|u\|_r\quad\text{for every}\quad r\ge2\quad\text{and}\quad E(\widetilde{u})<E(u).
\end{equation*}
However, since the mass constraint is not fulfilled by $\widetilde{u}$, we note that there exists $\beta<1$ such that $\|\beta\widetilde{u}\|_2^2=\|u\|_2^2$, but here, since $\beta^2>\beta^p$ and $E(\widetilde{u})<0$, this yields
\begin{equation*}
E(\beta\widetilde{u})=\f{1}{2}\beta^2 Q(\widetilde{u})-\f{1}{p}\beta^p\|\widetilde{u}\|_p^p>\beta^p E(\widetilde{u})>E(\widetilde{u}),
\end{equation*}
that provides an inequality in the opposite direction with respect to the aimed one.

Analogously, applying the same procedure to the minimization of the action $S_\omega$ on the Nehari manifold, there results 
\begin{equation}
\label{Swu}
I_\omega(\widetilde{u})<0\quad\text{and}\quad S_\omega(\widetilde{u})< S_\omega(u).
\end{equation}
However, if we set 
\begin{equation*}
\bar{\beta}:=\left(\f{Q_\omega(\widetilde{u})}{\|\widetilde{u}\|_p^p}\right)^{\f{1}{p-2}},
\end{equation*}
then $\bar{\beta}<1$, $I_\omega(\bar{\beta}\widetilde{u})=0$ and 
\begin{equation}
\label{Swtildeu}
S_\omega(\bar{\beta}\widetilde{u})=\f{1}{2}\bar{\beta}^2 Q_{\omega}(\widetilde{u})-\f{1}{p}\bar{\beta}^p\|\widetilde{u}\|_p^p>S_\omega(\widetilde{u}).
\end{equation}
Indeed, computing
\begin{equation*}
\f{d}{d\beta}S_\omega(\beta \widetilde{u})=\beta Q_\omega(\tilde{u})-\beta^{p-1}\|\widetilde{u}\|_p^p,
\end{equation*}
we find that $\f{d}{d\beta}S_\omega(\beta \widetilde{u})> 0$ if and only if $0<\beta< \bar{\beta}$, and $\f{d}{d\beta}S_\omega(\beta \widetilde{u})_{|\beta=\bar{\beta}}= 0$, so that $S_\omega(\bar{\beta}\widetilde{u})>S_\omega(\widetilde{u})$. In other words, here again \eqref{Swtildeu} is an inequality in the opposite direction with respect to the aimed one.
\end{remark}

We now start by proving (ii)(a), namely the coexistence of the regular and the singular part for a $\delta$-NLS action minimizer.

\begin{proposition}
\label{GSnornos}
Let $p>2$, $\alpha\in\R$ and $\omega>\omega_{0}$. Let also $u$ be a $\delta$-NLS action minimizer at frequency $\omega$. Then, $q\neq0$ and $\phi_\la:=u-q\G_\la\neq 0$, for every $\la>0$.
\end{proposition}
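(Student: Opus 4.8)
The plan is to establish the two assertions separately, each by reducing to a contradiction with a result already proved.

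\emph{The charge cannot vanish.} First I would rule out $q=0$. If $q=0$, then $u\in H^1(\Rd)$, so that $I_\omega(u)$ reduces to $I_\omega^0(u)$ and the constraint $I_\omega(u)=0$ places $u$ in $N_\omega^0$. Since $S_\omega$ coincides with $S_\omega^0$ on $H^1(\Rd)$ (Remark \ref{extension}), one gets
\[
d(\omega)=S_\omega(u)=S_\omega^0(u)\geq d^0(\omega),
\]
which contradicts Proposition \ref{comparinf}. Hence $q\neq0$. This part is immediate and requires no new ideas.

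\emph{The regular part cannot vanish.} Suppose, by contradiction, that $\phi_{\lambda_0}=0$ for some $\lambda_0>0$; by the previous step $u=q\G_{\lambda_0}$ with $q\neq0$. The crucial input is that an action minimizer is a bound state (Appendix \ref{app-gbstates}), hence $u\in D(H_\alpha)$ and $u$ solves \eqref{EL3}. I would then read the boundary condition in \eqref{domop} off the decomposition $u=0+q\G_{\lambda_0}$: it forces $0=(\alpha+\theta_{\lambda_0})q$, and since $q\neq0$ this gives $\theta_{\lambda_0}=-\alpha$. By the explicit form \eqref{eq-thla} of $\theta_\la$ together with the definition of $\ell_\alpha$ in \eqref{eq-spectrum}, the only solution is $\lambda_0=\omega_0=-\ell_\alpha$; in other words, a purely singular bound state is necessarily a multiple of the linear eigenfunction $\G_{\omega_0}$.

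Finally I would insert this into the stationary equation. With $\lambda_0=\omega_0$ the action \eqref{eq-actH} gives $H_\alpha u=-q\,\omega_0\,\G_{\omega_0}=\ell_\alpha u$, so \eqref{EL3} becomes $(\omega-\omega_0)u=|u|^{p-2}u$ almost everywhere. Taking moduli and dividing by $|u|$ where $u\neq0$ yields $|q|^{p-2}\,\G_{\omega_0}^{\,p-2}\equiv\omega-\omega_0$, which is impossible because $\G_{\omega_0}$ is non-constant (it is strictly decreasing along the radial direction and diverges logarithmically at the origin). This contradiction closes the argument. The main obstacle is precisely this second part: one must recognise that a vanishing regular part pins the decomposition parameter to $\omega_0$, turning $u$ into the linear bound state, and that the resulting linear profile is incompatible with the nonlinear term since $p\neq2$. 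A self-contained alternative would be a variational perturbation of $q\G_{\lambda_0}$ by a regular bump that lowers $\widetilde{S}$ while keeping $I_\omega\leq0$, but this is computationally heavier, so I would favour the bound-state route.
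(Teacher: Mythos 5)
Your proof is correct and follows essentially the same route as the paper: the charge part is ruled out via $d(\omega)=d^0(\omega)$ contradicting Proposition \ref{comparinf}, and a vanishing regular part is ruled out by using the boundary condition in \eqref{domop} to pin $\la=\omega_0$ and then showing \eqref{EL3} forces the non-constant function $|q|^{p-2}\G_{\omega_0}^{p-2}$ to equal the constant $\omega-\omega_0$. Your write-up even fixes a sign slip in the paper's displayed identity (the paper writes $\omega-\omega_0+|q|^{p-2}|\G_{\omega_0}|^{p-2}=0$ where your $\omega-\omega_0=|q|^{p-2}\G_{\omega_0}^{p-2}$ is the correct reduction), though both versions are manifestly impossible.
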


\begin{proof}
Let $\la>0$ and consider the decomposition $u=\phi_\la+q\G_\la$. Assume by contradiction that $\phi_{\la}=0$. Since $u\neq 0$, clearly $q\neq 0$. As $u$ has to satisfy \eqref{eq-regbound}, then $\alpha+\theta_\la=0$, so that $\la=\omega_{0}$. Since $u$ has to satisfy also \eqref{EL3}, with some computations one obtains that $q$ has to satisfy
\[
 \omega-\omega_0+|q|^{p-2}|\G_{\omega_0}(\x)|^{p-2}=0,\qquad\forall \x\in\Rd\setminus\{\z\},
\]
which is clearly not possible.

On the other hand, assume by contradiction that $q=0$, or equivalently that $u\in H^{1}(\Rd)$. This would imply that $d(\omega)=d^{0}(\omega)$, which contradicts Proposition \ref{comparinf}.
\end{proof}

\begin{remark}
Proposition \ref{GSnornos} marks a difference with the model  \eqref{eq-tNLS_conc}. Indeed, it was proven in \cite{ACCT-20,ACCT-21} that for any bound state there exists a value of $\lambda>0$ such that the regular part of the decomposition vanishes.
\end{remark}

We can move to the proof of point (ii)(b). Preliminarily, we note that, up to the multiplication by a phase factor, a $\delta$-NLS action minimizer $u=\phi_{\la}+q\G_{\la}$ can be assumed to display a charge $q>0$. Indeed, since $\G_{\la}(\x)>0$ for every $\x\in \Rd\setminus\{\z\}$ and $q\neq 0$, it is sufficient to multiply $u$ times $e^{i\theta}$ in such a way that $q e^{i\theta}>0$. In particular, if $\theta$ satisfies the equation $e^{i\theta}=\f{\bar{q}}{|q|}$, then $q e^{i\theta}=|q|$. As a consequence, we will always assume throughout that $q>0$.

The first key point for the proof of (ii)(b) is the switch from the minimization of $S_\omega$ constrained on $N_\omega$ to the minimization of $Q_{\omega}$ constrained on $D_{\omega}^{p}$, which is introduced in the next result. 

\begin{proposition}
\label{equivprob3}
Let $p>2$, $\alpha\in\R$ and $\omega>\omega_{0}$. Then,
\begin{equation}
\inf_{v\in D_{\omega}^p}Q_{\omega}(v)=\f{2p}{p-2}d(\omega),
\end{equation}
with $D_{\omega}^p$ defined in \eqref{DComega},
and there exists a function $u\in D_{\omega}^p$ such that $Q_{\omega}(u)=\f{2p}{p-2}d(\omega)$. In particular, there results that
\begin{equation}
\label{eq-fineq}
 \left\{
 \begin{array}{l}
  \displaystyle Q_{\omega}(w)=\f{2p}{p-2}d(\omega)\\[.2cm]
  \displaystyle w\in D_{\omega}^p
 \end{array}
 \right.
 \qquad\Longleftrightarrow\qquad
 \left\{
 \begin{array}{l}
  \displaystyle S_{\omega}(w)=d(\omega)\\[.2cm]
  \displaystyle w\in N_{\omega}
 \end{array}
 \right.
\end{equation}
\end{proposition}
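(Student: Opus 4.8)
The plan is to reduce the statement to the existence result already established in Theorem \ref{exchar-actmin}--(i) together with a single rescaling argument, exploiting the identities $S_\omega(v)=\f{1}{2}Q_\omega(v)-\f{1}{p}\|v\|_p^p$ and $I_\omega(v)=Q_\omega(v)-\|v\|_p^p$ from \eqref{eq-utile}. The key preliminary remark is that on $N_\omega$ one has $I_\omega=0$, hence $Q_\omega(v)=\|v\|_p^p$ and $S_\omega(v)=\widetilde{S}(v)=\f{p-2}{2p}\|v\|_p^p$; taking the infimum over $N_\omega$ this identifies $C(\omega)=\f{2p}{p-2}d(\omega)=\inf_{v\in N_\omega}\|v\|_p^p$, which is exactly the quantity against which I will compare $Q_\omega$ on $D_{C(\omega)}^p$.

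First I would prove the lower bound $\inf_{v\in D_{C(\omega)}^p}Q_\omega(v)\ge C(\omega)$. Given $v\in D_{C(\omega)}^p$, I would rescale it onto the Nehari manifold: since $\omega>\omega_0$ forces $Q_\omega(v)>0$ on $D\setminus\{0\}$ (as already used in the proof of Lemma \ref{equivprob}), the choice $\beta=\big(Q_\omega(v)/C(\omega)\big)^{1/(p-2)}$ yields $I_\omega(\beta v)=\beta^2Q_\omega(v)-\beta^p\|v\|_p^p=0$, so that $\beta v\in N_\omega$. Then $\beta^pC(\omega)=\|\beta v\|_p^p\ge\inf_{w\in N_\omega}\|w\|_p^p=C(\omega)$ gives $\beta\ge1$, whence $Q_\omega(v)=\beta^{p-2}C(\omega)\ge C(\omega)$. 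The matching upper bound and the attainment come for free from Theorem \ref{exchar-actmin}--(i): an action minimizer $u\in N_\omega$ with $S_\omega(u)=d(\omega)$ satisfies $Q_\omega(u)=\|u\|_p^p$ and $\f{p-2}{2p}\|u\|_p^p=d(\omega)$, so $\|u\|_p^p=Q_\omega(u)=C(\omega)$ and $u\in D_{C(\omega)}^p$ realizes the infimum.

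Finally, the equivalence \eqref{eq-fineq} is a direct algebraic check through \eqref{eq-utile}. From $w\in N_\omega$ and $S_\omega(w)=d(\omega)$ one reads off $\|w\|_p^p=Q_\omega(w)$ and $\f{p-2}{2p}\|w\|_p^p=d(\omega)$, hence $Q_\omega(w)=\|w\|_p^p=C(\omega)$ and $w\in D_{C(\omega)}^p$; conversely, $Q_\omega(w)=\|w\|_p^p=C(\omega)$ forces $I_\omega(w)=0$ (with $w\neq0$ since $C(\omega)>0$ by Proposition \ref{dpos}), so $w\in N_\omega$, while $S_\omega(w)=\f{1}{2}C(\omega)-\f{1}{p}C(\omega)=\f{p-2}{2p}C(\omega)=d(\omega)$. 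I do not anticipate a genuine obstacle: the only point requiring care is the rescaling step, whose legitimacy rests entirely on the positivity of $Q_\omega$ away from the origin for $\omega>\omega_0$, which is already available from the previous section.
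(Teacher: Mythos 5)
Your proof is correct, and it rests on the same two pillars as the paper's: the existence of an action minimizer from Theorem \ref{exchar-actmin}--(i) and the positivity of $Q_\omega$ on $D\setminus\{0\}$ for $\omega>\omega_0$. The difference lies in how the lower bound $\inf_{v\in D^p_{C(\omega)}}Q_\omega(v)\ge C(\omega)$ is obtained. The paper argues by contradiction: if some $v\in D^p_{C(\omega)}$ had $I_\omega(v)<0$, then $v$ would belong to $\widetilde{N}_\omega$ and realize $\widetilde{S}(v)=d(\omega)$ while violating $I_\omega(v)=0$, contradicting Lemma \ref{equivprob}; this shows $I_\omega\ge 0$ on $D^p_{C(\omega)}$, and the conclusion is then transferred from $I_\omega$ to $Q_\omega$ via Lemma \ref{equivprob2}. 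You instead identify $C(\omega)=\inf_{w\in N_\omega}\|w\|_p^p$ and project each $v\in D^p_{C(\omega)}$ onto the Nehari manifold by the scaling $\beta=\left(Q_\omega(v)/C(\omega)\right)^{1/(p-2)}$, reading off $Q_\omega(v)=\beta^{p-2}C(\omega)\ge C(\omega)$ directly from $\beta\ge 1$. This bypasses both Lemma \ref{equivprob2} and the auxiliary set $\widetilde{N}_\omega$; note, though, that your scaling map is exactly the one used inside the paper's proof of Lemma \ref{equivprob}, so the underlying idea is the same, just inlined. Your version buys a shorter, quantitative and self-contained argument; the paper's version buys reuse of lemmas it has already established, together with the intermediate fact that $I_\omega\ge 0$ on all of $D^p_{C(\omega)}$. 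The algebraic verification of the equivalence \eqref{eq-fineq}, including the use of Proposition \ref{dpos} to guarantee $C(\omega)>0$ and hence $w\neq 0$, matches the paper's.
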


\begin{remark}
\label{GSequiv}
In view of this result, one sees that, in order to study the features of $\delta$-NLS action minimizers at frequency $\omega$, it is sufficient (in fact, equivalent) to study the minimizers of $Q_\omega$ on $D_{\omega}^p$.
\end{remark}

\begin{proof}[Proof of Proposition \ref{equivprob3}]
Let $u$ be $\delta$-NLS action minimizer at frequency $\omega$. Then, by Lemma \ref{equivprob} $u$ is a minimizer of $\widetilde{S}$ on $\widetilde{N}_\omega$, so that $\|u\|_{p}^{p} \le \|v\|_{p}^{p}$ for every $v\in \widetilde{N}_\omega$, $\|u\|_{p}^{p}=\f{2p}{p-2}d(\omega)$ and $I_{\omega}(u)=0$. 

Let $v\in D_{\omega}^p$. First we see that $I_{\omega}(v)\ge 0=I_{\omega}(u)$. Indeed, if we assume by contradiction that there exists $v\in D\setminus\{0\}$ such that $I_{\omega}(v)<0$, then by Lemma \ref{equivprob} $v$ cannot be a minimizer of $\widetilde{S}$ on $\widetilde{N}_\omega$, and thus $\|v\|_{p}^{p}>\f{2p}{p-2}d(\omega)$, which contradicts the fact that $v\in D_{\omega}^p$. Therefore, $u$ is a minimizer of $I_{\omega}$ on $D_{\omega}^p$, which yields, by using
\begin{equation*}
I_{\omega}(u)=Q_{\omega}(u)-\|u\|_{p}^{p}=Q_{\omega}(u)-\f{2p}{p-2}d(\omega),
\end{equation*}
 that $u$ is also a minimizer of $Q_{\omega}$ on $D_{\omega}^p$ and that $Q_{\omega}(u)=\f{2p}{p-2}d(\omega)$.

This clearly proves the first part of the proposition and the reverse implication in \eqref{eq-fineq}. It is, then, to prove that every minimizer of $Q_{\omega}$ on $D_{\omega}^p$ is a $\delta$-NLS action minimizer at frequency $\omega$. To this aim, let $w$ be a minimizer of $Q_{\omega}$ on $D_{\omega}^p$. It is straightforward that
\begin{equation*}
S_{\omega}(w)=\widetilde{S}(w)=\f{p-2}{2p}\|w\|_{p}^{p}=d(\omega)
\end{equation*}
and, by combining the two equations in \eqref{eq-utile},
\begin{equation*}
I_{\omega}(w)=Q_{\omega}(w)-\|w\|_{p}^{p}=2S_{\omega}(w)-\f{p-2}{p}\|w\|_{p}^{p}=2d(\omega)-2d(\omega)=0,
\end{equation*}
which conclude the proof.
\end{proof}

We can now prove the first part of (ii)(b), which is the positivity up to gauge invariance.

\begin{proposition}
\label{realposgs}
Let $p>2$, $\alpha\in\R$ and $\omega>\omega_{0}$. Then, $\delta$-NLS action minimizers at frequency $\omega$ are positive, up to gauge invariance.
\end{proposition}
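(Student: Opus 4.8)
The plan is to show that, if $u$ is a $\delta$-NLS action minimizer at frequency $\omega$, then its modulus $|u|$ is a minimizer too; this yields a nonnegative minimizer, and the equality case of the comparison below forces $u$ to have constant phase. I use the reduction to $q>0$ made just before Lemma~\ref{equivprob2}, together with the equivalent formulations of Lemma~\ref{equivprob}, Proposition~\ref{equivprob3} and Remark~\ref{GSequiv}, so that $u$ minimizes $\widetilde{S}$ on $\widetilde{N}_\omega$, $I_\omega(u)=0$ and $Q_\omega(u)=C(\omega)=\|u\|_p^p$. Since $\||u|\|_p=\|u\|_p$, by \eqref{eq-secondequiv} it is enough to prove that $|u|\in D$ and $Q_\omega(|u|)\le Q_\omega(u)$: this gives $I_\omega(|u|)\le0$, hence $|u|\in\widetilde{N}_\omega$ with $\widetilde{S}(|u|)=\widetilde{S}(u)=d(\omega)$, so that $|u|$ is a minimizer.

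First I would check that $|u|$ still lies in $D$ and carries the same charge $q$. Writing $u=\phi_\la+q\G_\la$ with $q>0$, the pointwise bound $\bigl\lvert|u|-q\G_\la\bigr\rvert\le|\Re\phi_\la|+|\Im\phi_\la|$ shows $|u|-q\G_\la\in L^2(\Rd)$; for the gradient, since $\G_\la(\x)\to+\infty$ as $\x\to\z$ and $q>0$, the singular part dominates and $\bigl\lvert|u|-\Re u\bigr\rvert=(\Im u)^2/(|u|+\Re u)$ decays like $\G_\la^{-1}$ near the origin, which controls the only potentially non-$L^2$ term in $\na(|u|-q\G_\la)$. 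Hence $|u|-q\G_\la=:\psi_\la\in H^1(\Rd)$, i.e.\ $|u|\in D$ with charge $q$.

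The core of the proof is the inequality $Q_\omega(|u|)\le Q_\omega(u)$. The difficulty is that the singular structure of $D$ prevents one from separating the Dirichlet integrals of $\na\phi_\la$ and $\na\G_\la$, as $\na\G_\la\notin L^2$. To bypass this I would pass to the truncated energy $\mathcal Q^{\ep}_\omega(v):=\int_{|\x|>\ep}(|\na v|^2+\omega|v|^2)\dx$, for which the pointwise diamagnetic inequality $\big|\na|u|\big|\le|\na u|$ gives $\mathcal Q^{\ep}_\omega(|u|)\le\mathcal Q^{\ep}_\omega(u)$ for every $\ep>0$. Taking $\la=\omega$ and integrating by parts on $\{|\x|>\ep\}$, where $(-\lap+\omega)\G_\omega=0$, one obtains an expansion of the form
\begin{equation*}
Q_\omega(v)=\mathcal Q^{\ep}_\omega(v)-\f{q^2}{2\pi}\log\tf{1}{\ep}-2q\,\langle\Re\phi_\omega^v\rangle_\ep+\kappa_\omega\,q^2+o(1),\qquad\ep\to0,
\end{equation*}
where $\langle\cdot\rangle_\ep$ denotes the average on the circle $\{|\x|=\ep\}$, $\phi_\omega^v$ the regular part of $v$ and $\kappa_\omega$ a constant independent of $v$. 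The divergent self-energy term and $\kappa_\omega q^2$ depend on $v$ only through the charge $q$, hence cancel in the difference $Q_\omega(u)-Q_\omega(|u|)$; the cross terms match as well, since $\langle\Re\phi_\omega^u\rangle_\ep-\langle\psi_\omega\rangle_\ep=\langle\Re u-|u|\rangle_\ep\to0$, again because $q\G_\omega$ dominates near $\z$ (here I use that $u$, being a bound state, has a continuous regular part). Combining, $Q_\omega(u)-Q_\omega(|u|)\ge\lim_{\ep\to0}\bigl(\mathcal Q^{\ep}_\omega(u)-\mathcal Q^{\ep}_\omega(|u|)\bigr)\ge0$.

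It remains to strengthen $|u|\ge0$ to strict positivity and to recover $u$. Being a minimizer, $|u|$ is a bound state, so by \eqref{eq-regbound}--\eqref{EL3} it solves $-\lap|u|+\omega|u|=|u|^{p-1}$ on $\Rd\setminus\{\z\}$; as $|u|\not\equiv0$ and $\omega>0$, the strong maximum principle yields $|u|>0$ on the connected set $\Rd\setminus\{\z\}$, while $|u|\to+\infty$ at $\z$ because $q>0$. Finally, since $u$ and $|u|$ are both minimizers, $Q_\omega(u)=Q_\omega(|u|)$, so equality holds in the diamagnetic inequality; with $|u|>0$ on the connected set $\Rd\setminus\{\z\}$ this forces $u=e^{i\theta}|u|$ for a constant phase $\theta$, proving positivity up to gauge invariance. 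I expect the main obstacle to be exactly the comparison $Q_\omega(|u|)\le Q_\omega(u)$: beyond the diamagnetic inequality, it requires verifying that the divergent and cross renormalizations for $u$ and $|u|$ coincide.
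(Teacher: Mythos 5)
Your proposal is correct in substance, but it takes a genuinely different and considerably heavier route than the paper. You rearrange the \emph{whole function}, passing from $u$ to $|u|$, and this forces three technical steps the paper never needs: showing $|u|\in D$ with the same charge $q$ (which requires the $L^\infty$-bound on the regular part near the origin, i.e.\ the $H^2$-regularity coming from the Euler--Lagrange equations of Appendix \ref{app-gbstates}); comparing $Q_\omega(|u|)$ with $Q_\omega(u)$ via the truncated form $\mathcal{Q}^\ep_\omega$ and a renormalized expansion (your expansion is indeed correct --- the divergent term and the constant depend on $v$ only through $q$, and the boundary cross-terms differ by $\langle\Re u-|u|\rangle_\ep\to 0$ --- but justifying it rigorously is the bulk of the work); and recovering strict positivity and the constant phase via the strong maximum principle plus the equality case of the diamagnetic inequality. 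The paper instead rearranges only the \emph{regular part}: with the choice $\la=\omega$ one has $Q_\omega(v)=\|\na\phi_\omega\|_2^2+\omega\|\phi_\omega\|_2^2+(\alpha+\theta_\omega)|q|^2$, which depends only on the $H^1$-data of $\phi_\omega$ and on $|q|$, so the competitor $\widetilde u:=|\phi_\omega|+q\G_\omega$ satisfies $Q_\omega(\widetilde u)\le Q_\omega(u)$ by the standard diamagnetic inequality in $H^1(\Rd)$, while $\|\widetilde u\|_p^p>\|u\|_p^p$ strictly unless the phase of $\phi_\omega$ vanishes a.e.; scaling by $\beta<1$ to restore the constraint $\|\cdot\|_p^p=C(\omega)$ then contradicts the minimality of $Q_\omega$ on $D^p_{C(\omega)}$ given by Proposition \ref{equivprob3}. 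This yields $\phi_\omega\ge 0$, hence $u=\phi_\omega+q\G_\omega>0$ on $\Rd\setminus\{\z\}$, with no renormalization, no maximum principle, and no equality-case analysis. What your approach buys is the extra information that $|u|$ is itself a minimizer and that equality holds in the diamagnetic inequality; what the paper's buys is brevity and reusability: the nonnegativity of $\phi_\omega$ obtained as a byproduct is exactly what feeds Corollary \ref{regpos} (your route does not immediately give it), the same regular-part device drives the symmetry proof of Proposition \ref{minsymm}, and it is precisely the mechanism, discussed in Remark \ref{no-rearr-gs}, that circumvents the obstruction you correctly identify at the outset, namely that $\na\G_\omega\notin L^2(\Rd)$ forbids any direct splitting of the Dirichlet integral.
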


\begin{proof}
Let $u$ be a $\delta$-NLS action minimizer at frequency $\omega$. Up to gauge invariance, it is not restrictive to assume $q>0$. In addition, by Proposition \ref{equivprob3}, $u$ is also a minimizer of $Q_{\omega}$ on $D_{\omega}^p$. Now, let us choose $\la=\omega$ in the decomposition of $u$ and define $\Omega:=\{\x\in\Rd:\phi_\omega(\x)\neq0\}$. By Proposition \ref{GSnornos}, $|\Omega|>0$. Then, we can write
\begin{equation*}
u(\x)=\phi_{\omega}(\x)+q\G_{\omega}(\x)=e^{i\eta(\x)}|\phi_{\omega}(\x)|+q\G_{\omega}(\x),\qquad\forall\x\in\Omega\setminus\{\z\},
\end{equation*}
for some $\eta:\Omega\to[0,2\pi)$. If one can prove that $\eta(\x)=0$ for a.e. $\x\in\Omega\setminus\{\z\}$, then the proof is complete as this entails that $\phi_\omega(\x)=|\phi_\omega(\x)|\geq0$ for every $\x\in\Rd$, whence $u(\x)>0$ for every $\x\in\Rd\setminus\{\z\}$.

To this aim, assume by contradiction that $\eta\neq 0$ on $\Omega_1\subset(\Omega\setminus\{\z\})$, with $|\Omega_1|>0$. Letting $\widetilde{u}:=|\phi_{\omega}|+q\G_{\omega}$ (note that $u=\widetilde{u}$ in $\Rd\setminus\Omega_1$), there results that 
\begin{multline*}
|u(\x)|^{2}=|\phi_{\omega}(\x)|^{2}+q^{2}\G_{\omega}^2(\x)+2\cos(\eta(\x))|\phi_{\omega}(\x)|\G_{\omega}(\x)\\[.2cm]
<|\phi_{\omega}(\x)|^{2}+q^{2}\G_{\omega}^2(\x)+2|\phi_{\omega}(\x)|\G_{\omega}(\x)=|\widetilde{u}(\x)|^{2},\qquad\forall \x\in\Omega_1.
\end{multline*}
Hence, as $|\Omega_1|>0$,
\begin{equation}
\label{pnormstrict}
\|u\|_{p}^{p}=\int_{\Rd}\left(|u|^{2}\right)^{\f{p}{2}}\dx<\int_{\Rd}\left(|\widetilde{u}|^{2}\right)^{\f{p}{2}}\dx=\|\widetilde{u}\|_{p}^{p}.
\end{equation}
On the other hand, it is straightforward to check that $Q_{\omega}(\widetilde{u})\le Q_{\omega}(u)$. Now, from \eqref{pnormstrict} and the positivity of $Q_\omega$, there exists $\beta\in(0,1)$ such that $\|\beta\widetilde{u}\|_{p}^{p}=\|u\|_{p}^{p}=\f{2p}{p-2}d(\omega)$ and
\begin{equation*}
Q_{\omega}(\beta\widetilde{u})=\beta^{2}Q_{\omega}(\widetilde{u})<Q_{\omega}(u),
\end{equation*}
which contradicts the fact that $u$ minimizes $Q_\omega$ on $D_{\omega}^p$. Thus $\eta=0$ a.e. on $\Omega\setminus\{\z\}$, which concludes the proof.
\end{proof}

The proof of the previous result also entails that for, $\la=\omega$, the regular part $\phi_\omega$ of a $\delta$-NLS action minimizer at frequency $\omega$ is nonnegative. The following corollary points out that, whenever $\la>\omega$, it is in fact positive.

\begin{corollary}
\label{regpos}
Let $p>2$, $\alpha\in\R$ and $\omega>\omega_{0}$. Let also $u$ be a $\delta$-NLS action minimizer at frequency $\omega$. Then the regular part $\phi_{\la}:=u-q\G_\la$ is positive for every $\la>\omega$, up to gauge invariance.
\end{corollary}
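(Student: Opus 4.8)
The plan is to reduce the statement to facts already at our disposal, since the strict positivity of $\phi_\lambda$ for $\lambda>\omega$ turns out to be an almost immediate consequence of the case $\lambda=\omega$ together with the monotonicity of the Green's functions in the parameter. First I would recall the two ingredients established in (and right after) the proof of Proposition \ref{realposgs}: up to gauge invariance we may assume that the charge satisfies $q>0$, and that the regular part corresponding to the choice $\lambda=\omega$, namely $\phi_\omega=u-q\G_\omega$, is nonnegative on $\Rd$.

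Next I would exploit the independence of the decomposition on the parameter discussed in Section 1.1. Fixing any $\lambda>\omega$ and comparing the decompositions of $u$ at $\lambda$ and at $\omega$, one has $u=\phi_\lambda+q\G_\lambda=\phi_\omega+q\G_\omega$, whence
\begin{equation*}
\phi_\lambda=\phi_\omega+q\,(\G_\omega-\G_\lambda).
\end{equation*}
Since $\G_\omega-\G_\lambda\in H^2(\Rd)$ by \eqref{eq-propG1}, this identity holds in $H^1(\Rd)$, as it must.

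The decisive observation is then the strict monotonicity \eqref{Gla<Gnu}: as $\omega<\lambda$, one has $\G_\lambda(\x)<\G_\omega(\x)$ for every $\x\in\Rd\setminus\{\z\}$, so that $\G_\omega-\G_\lambda>0$ on $\Rd\setminus\{\z\}$. Combining this with $q>0$ and $\phi_\omega\geq0$ yields $\phi_\lambda(\x)=\phi_\omega(\x)+q\,(\G_\omega-\G_\lambda)(\x)>0$ for a.e.\ $\x\in\Rd$, which is exactly the claim (still up to gauge invariance through the normalization $q>0$). There is essentially no obstacle here: the only point deserving attention is that the inequality is \emph{strict}, and this is guaranteed precisely because \eqref{Gla<Gnu} is a strict inequality and $q$ is strictly positive, so the nonnegative term $\phi_\omega$ cannot spoil positivity.
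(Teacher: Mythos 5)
Your proof is correct and is essentially the paper's own argument: the same comparison of decompositions $\phi_\la=\phi_\omega+q(\G_\omega-\G_\la)$, the same use of \eqref{Gla<Gnu} together with $q>0$ and $\phi_\omega\geq0$ from Proposition \ref{realposgs}. The only cosmetic difference is that the paper additionally invokes \eqref{eq-propG2} to get strict positivity at the origin itself (where \eqref{Gla<Gnu} does not apply), whereas you state the conclusion almost everywhere, which covers the same content since $\{\z\}$ is a null set.
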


\begin{proof}
Let $u$ be a positive $\delta$-NLS action minimizer at frequency $\omega$ and consider the decomposition $u=\phi_\la+q\G_\la$ for a fixed $\la>\omega$. First, using \eqref{Gla<Gnu} and $q>0$, we see that
\begin{equation*}
\phi_\la(\x)=\phi_\omega(\x)+q(\G_{\omega}(\x)-\G_\la(\x))>0,\qquad\forall \x\in\Rd\setminus\{\z\}.
\end{equation*}
Then, one concludes the proof just recalling \eqref{eq-propG2}.
\end{proof}

Finally, we may address the problem of the radially symmetric monotonicity of $\delta$-NLS action minimizers.
 
\begin{proposition}
\label{minsymm}
Let $p>2$, $\alpha\in\R$ and $\omega>\omega_{0}$. Then, $\delta$-NLS action minimizers at frequency $\omega$ are radially symmetric decreasing, up to gauge invariance.
\end{proposition}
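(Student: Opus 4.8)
The plan is to exploit the equivalent variational characterization from Proposition~\ref{equivprob3} (see Remark~\ref{GSequiv}): a $\delta$-NLS action minimizer $u$ at frequency $\omega$ is the same object as a minimizer of $Q_\omega$ over $D_{C(\omega)}^p$, with $C(\omega)=\tf{2p}{p-2}d(\omega)$. By Proposition~\ref{realposgs} I may assume, up to gauge invariance, that $u=\phi_\omega+q\G_\omega$ is positive with charge $q>0$ and with nonnegative regular part $\phi_\omega\ge0$. The decisive preliminary move is to decompose $u$ using precisely $\la=\omega$: from \eqref{eq-Q} and \eqref{Qomega} the term $(\omega-\la)\|u\|_2^2$ then vanishes, leaving
\[
 Q_\omega(u)=\|\na\phi_\omega\|_2^2+\omega\|\phi_\omega\|_2^2+(\alpha+\theta_\omega)\,q^2 .
\]
This is what makes symmetrization feasible, since the $L^2$-norm of the \emph{full} function $u$ is not preserved by rearrangement (the cross term $\scal{\phi_\omega}{\G_\omega}$ only increases, by Hardy--Littlewood), whereas $\|\phi_\omega\|_2$ alone is.

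Next I would introduce the symmetrized competitor $\wt u:=\phi_\omega^*+q\G_\omega$, which still lies in $D$ with the same charge $q$, since $\phi_\omega^*\in H^1(\Rd)$ and $\G_\omega$ is already radially symmetric and strictly decreasing, so that $(q\G_\omega)^*=q\G_\omega$. Because the charge term is untouched, the P\'olya--Szeg\H{o} inequality \eqref{PS} together with equimeasurability \eqref{equimeas} give $Q_\omega(\wt u)\le Q_\omega(u)=C(\omega)$. On the other hand, applying the rearrangement inequality of Proposition~\ref{fgtheor} with $f=q\G_\omega$ (radially symmetric, strictly decreasing, nonnegative) and $g=\phi_\omega\ge0$ yields $\|u\|_p^p\le\|\wt u\|_p^p$, so in particular $\|\wt u\|_p^p\ge C(\omega)$.

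To close, I would renormalize: set $\beta:=\big(C(\omega)/\|\wt u\|_p^p\big)^{1/p}\le1$, so that $\beta\wt u\in D_{C(\omega)}^p$, and use that $Q_\omega$ is quadratic. Minimality of $u$ then forces the chain
\[
 C(\omega)=Q_\omega(u)\le Q_\omega(\beta\wt u)=\beta^2Q_\omega(\wt u)\le\beta^2C(\omega)\le C(\omega),
\]
which collapses to a string of equalities, giving $\beta=1$ and hence $\|\wt u\|_p^p=C(\omega)=\|u\|_p^p$. This is exactly equality in Proposition~\ref{fgtheor}; since $q\G_\omega$ is strictly decreasing, its rigidity part forces $\phi_\omega=\phi_\omega^*$ a.e. Thus $\phi_\omega$ is radially symmetric and decreasing, and $u=\phi_\omega+q\G_\omega$, being the sum of two radially symmetric decreasing functions, has the same property.

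The main obstacle I anticipate is not the rearrangement estimates themselves but the bookkeeping that makes them applicable: one must choose the decomposition parameter $\la=\omega$ to cancel the mass term (as $\|u\|_2$ is not rearrangement-invariant), must invoke the positivity of $u$ and the nonnegativity of $\phi_\omega$ from Proposition~\ref{realposgs} in order to symmetrize legitimately only the regular part, and must rely on the \emph{strict} monotonicity of $\G_\omega$ to activate the equality (rigidity) clause of Proposition~\ref{fgtheor}. Once these ingredients are aligned, the scaling argument and the passage from equality to $\phi_\omega=\phi_\omega^*$ are routine.
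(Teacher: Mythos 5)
Your proposal is correct and follows essentially the same route as the paper: reduction via Proposition~\ref{equivprob3} to minimizing $Q_\omega$ on $D_{C(\omega)}^p$, the choice $\la=\omega$ to kill the mass term, the competitor $\widetilde{u}=\phi_\omega^*+q\G_\omega$ handled with P\'olya--Szeg\H{o}, equimeasurability and Proposition~\ref{fgtheor}, and the scaling by $\beta$. The only (immaterial) difference is that you run a direct chain of equalities and then invoke the rigidity clause of Proposition~\ref{fgtheor}, whereas the paper assumes $\phi_\omega\neq\phi_\omega^*$ and uses the same clause contrapositively to get a strict inequality and a contradiction.
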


\begin{proof}
Without loss of generality let $u$ be a positive $\delta$-NLS action minimizers at frequency $\omega$. Consider also the decomposition $u=\phi_\omega+q\G_\omega$, corresponding to the choice $\la=\omega$. In order to prove the claim is is sufficient to show that $\phi_\omega=\phi_\omega^*$, with $\phi_\omega^*$ the radially symmetric nonincreasing rearrangement of $\phi_\omega$.

Assume, by contradiction, that $\phi_\omega\neq\phi_\omega^*$, that is $\phi_\omega$ is not radially symmetric nonincreasing. Then, define the function $\widetilde{u}=\phi_\omega^*+q\G_\omega$. By \eqref{PS} and \eqref{equimeas}, we have $\|\na\phi_\omega^*\|_2\le \|\na\phi_\omega\|_2$ and $\|\phi_\omega^*\|_2=\|\phi_\omega\|_2$, so that
\begin{equation*}
Q_\omega(\widetilde{u})\le Q_\omega(u).
\end{equation*}
Now, applying Proposition \ref{fgtheor} with $f=q\G_\omega$ and $g=\phi_\omega$, there results that $\|\widetilde{u}\|_p^p>\|u\|_p^p$, as $\phi_\omega\neq\phi_\omega^*$. Therefore, (as $Q_\omega$ is positive) there exists $\beta<1$ such that $\|\beta \widetilde{u}\|_p^p=\|u\|_p^p$ and
\begin{equation*}
Q_{\omega}(\beta\widetilde{u})=\beta^{2}Q_{\omega}(\widetilde{u})<Q_{\omega}(\widetilde{u})\le Q_{\omega}(u),
\end{equation*}
but, via Proposition \ref{equivprob3} (arguing as in the proof of Proposition \ref{realposgs}), this contradicts that $u$ is a $\delta$-NLS action minimizer, thus concluding the proof.
\end{proof}

We can now sum up all the previous results to prove point (ii) of Theorems \ref{exchar-gs} and \ref{exchar-actmin}. 

\begin{proof}[Proof of Theorems \ref{exchar-gs} and \ref{exchar-actmin}-(ii)]
Let $u$ be a $\delta$-NLS action minimizer at frequency $\omega>\omega_0$. Then, by Proposition \ref{GSnornos}, Proposition \ref{realposgs}, Corollary \ref{regpos} and Proposition \ref{minsymm}, $u$ satisfies all the properties stated in (ii).

Let $p\in(2,4)$ and $u$ be a $\delta$-NLS ground state of mass $\mu$. Combining Lemma \ref{chargs} and point (i) of Theorem \ref{exchar-actmin} one sees that $u$ is also a $\delta$-NLS action minimizer at some frequency $\omega>\omega_0$ (in particular, $\omega=\mu^{-1}(\|u\|_p^p-Q(u))$). Then, one concludes by point (ii) of Theorem \ref{exchar-actmin}.
\end{proof}

%%%%%%%%%%%%%%%%%%%%%%%%%%%%%%%%%%%%%%%%%%%%%%%%%%%%%%%%%%%%%%%%%%%%%%%%%%%%%%%%
%%%%%%%%%%%%%%%%%%%%%%%%%%%%%%%%%%%%%%%%%%%%%%%%%%%%%%%%%%%%%%%%%%%%%%%%%%%%%%%%
%%%%%%%%%%%%%%%%%%%%%%%%%%%%%%%%%%%%%%%%%%%%%%%%%%%%%%%%%%%%%%%%%%%%%%%%%%%%%%%%
%%%%%%%%%%%%%%%%%%%%%%%%%%%%%%%%%%%%%%%%%%%%%%%%%%%%%%%%%%%%%%%%%%%%%%%%%%%%%%%%
%%%%%%%%%%%%%%%%%%%%%%%%%%%%%%%%%%%%%%%%%%%%%%%%%%%%%%%%%%%%%%%%%%%%%%%%%%%%%%%%
%%%%%%%%%%%%%%%%%%%%%%%%%%%%%%%%%%%%%%%%%%%%%%%%%%%%%%%%%%%%%%%%%%%%%%%%%%%%%%%%
%%%%%%%%%%%%%%%%%%%%%%%%%%%%%%%%%%%%%%%%%%%%%%%%%%%%%%%%%%%%%%%%%%%%%%%%%%%%%%%%
%%%%%%%%%%%%%%%%%%%%%%%%%%%%%%%%%%%%%%%%%%%%%%%%%%%%%%%%%%%%%%%%%%%%%%%%%%%%%%%%
%%%%%%%%%%%%%%%%%%%%%%%%%%%%%%%%%%%%%%%%%%%%%%%%%%%%%%%%%%%%%%%%%%%%%%%%%%%%%%%%
%%%%%%%%%%%%%%%%%%%%%%%%%%%%%%%%%%%%%%%%%%%%%%%%%%%%%%%%%%%%%%%%%%%%%%%%%%%%%%%%
%%%%%%%%%%%%%%%%%%%%%%%%%%%%%%%%%%%%%%%%%%%%%%%%%%%%%%%%%%%%%%%%%%%%%%%%%%%%%%%%
%%%%%%%%%%%%%%%%%%%%%%%%%%%%%%%%%%%%%%%%%%%%%%%%%%%%%%%%%%%%%%%%%%%%%%%%%%%%%%%%

%%%%%%%%%%%%%%%%%%%%%%%%%%%%%%%%%%
%%%%%%%%%%% Appendice %%%%%%%%%%%%
%%%%%%%%%%%%%%%%%%%%%%%%%%%%%%%%%%

\appendix

%%%%%%%%%%%%%%%%%%%%%%%%%%%%%%%%%%%%%%%%%%%%%%%%%%%%%%%%%%%%%%%%%%%%%%%%%%%%%%%%
%%%%%%%%%%%%%%%%%%%%%%%%%%%%%%%%%%%%%%%%%%%%%%%%%%%%%%%%%%%%%%%%%%%%%%%%%%%%%%%%
%%%%%%%%%%%%%%%%%%%%%%%%%%%%%%%%%%%%%%%%%%%%%%%%%%%%%%%%%%%%%%%%%%%%%%%%%%%%%%%%
%%%%%%%%%%%%%%%%%%%%%%%%%%%%%%%%%%%%%%%%%%%%%%%%%%%%%%%%%%%%%%%%%%%%%%%%%%%%%%%%
%%%%%%%%%%%%%%%%%%%%%%%%%%%%%%%%%%%%%%%%%%%%%%%%%%%%%%%%%%%%%%%%%%%%%%%%%%%%%%%%
%%%%%%%%%%%%%%%%%%%%%%%%%%%%%%%%%%%%%%%%%%%%%%%%%%%%%%%%%%%%%%%%%%%%%%%%%%%%%%%%
%%%%%%%%%%%%%%%%%%%%%%%%%%%%%%%%%%%%%%%%%%%%%%%%%%%%%%%%%%%%%%%%%%%%%%%%%%%%%%%%
%%%%%%%%%%%%%%%%%%%%%%%%%%%%%%%%%%%%%%%%%%%%%%%%%%%%%%%%%%%%%%%%%%%%%%%%%%%%%%%%
%%%%%%%%%%%%%%%%%%%%%%%%%%%%%%%%%%%%%%%%%%%%%%%%%%%%%%%%%%%%%%%%%%%%%%%%%%%%%%%%
%%%%%%%%%%%%%%%%%%%%%%%%%%%%%%%%%%%%%%%%%%%%%%%%%%%%%%%%%%%%%%%%%%%%%%%%%%%%%%%%
%%%%%%%%%%%%%%%%%%%%%%%%%%%%%%%%%%%%%%%%%%%%%%%%%%%%%%%%%%%%%%%%%%%%%%%%%%%%%%%%
%%%%%%%%%%%%%%%%%%%%%%%%%%%%%%%%%%%%%%%%%%%%%%%%%%%%%%%%%%%%%%%%%%%%%%%%%%%%%%%%

%%%%%%%%%%%%%%%%%%%%%%%%%%%%%%%%%%
%%%%%%%%% Appendice A %%%%%%%%%%%%
%%%%%%%%%%%%%%%%%%%%%%%%%%%%%%%%%%

\section{Ground states, action minimizers and bound states}
\label{app-gbstates}

In this section, we show that both $\delta$-NLS ground states and $\delta$-NLS action minimizers are $\delta$-NLS bound states, i.e. they satisfy \eqref{eq-regbound} and \eqref{EL3}.

First, we note that (using either the Lagrange Multipliers theorem in the former case or the simple Du Bois-Reymond equation in the latter case), if $u$ is either a $\delta$-NLS ground state of mass $\mu$ or a $\delta$-NLS action minimizers at frequency $\omega$, then it satisfies, for any fixed $\la>0$,
\begin{multline}
\label{ELdelta}
\langle\na\chi_{\la} ,\na\phi_{\la}\rangle+\la\langle\chi_{\la} ,\phi_{\la}\rangle+(\omega-\la) \langle \chi,u\rangle+\bar{\xi}q\left(\alpha+\theta_\la\right)-\langle\chi,|u|^{p-2}u\rangle=0\\
\forall\chi=\chi_{\la}+\xi\G_{\la}\in D.
\end{multline}
Whenever $u$ is a $\delta$-NLS ground state of mass $\mu$, $\omega=\mu^{-1}(\|u\|_p^p-Q(u))$. Now, letting $\xi=0$ in \eqref{ELdelta}, so that $\chi=\chi_\la\in H^1(\Rd)$, there results
\begin{equation}
\langle\na\chi ,\na\phi_{\la}\rangle+\langle\chi ,\omega\phi_{\la}+(\omega-\la)q\G_{\la}-|u|^{p-2}u\rangle=0\qquad\forall\chi\in H^{1}(\Rd).
\end{equation}
Hence, as $\omega\phi_{\la}+(\omega-\la)q\G_{\la}-|u|^{p-2}u\in L^2(\Rd)$, $\phi_{\la}\in H^{2}(\Rd)$ and, by density, 
\begin{equation}
\label{EL2}
-\lap\phi_{\la}+\omega\phi_{\la}+(\omega-\la)q\G_{\la}-|u|^{p-2}u=0\qquad\text{in}\quad L^2(\Rd), 
\end{equation}
which is equivalent to \eqref{EL3}. On the other hand, letting $\chi_{\la}=0$ and $\xi=1$ in \eqref{ELdelta}, so that $\chi=\G_{\la}$, there results
\begin{equation}
 \langle \G_{\la}, (\omega-\la)u-|u|^{p-2}u\rangle+q\left(\alpha+\theta_\la\right)=0.
\end{equation} 
Finally, using \eqref{EL2}, we obtain
\begin{equation}
 \langle \G_{\la}, (-\lap+\la)\phi_{\la}\rangle=q\left(\alpha+\theta_\la\right),
\end{equation}
which is equivalent to $\phi_{\la}(0)=q\left(\alpha+\theta_\la\right)$, so that also \eqref{eq-regbound} is satisfied.

%%%%%%%%%%%%%%%%%%%%%%%%%%%%%%%%%%%%%%%%%%%%%%%%%%%%%%%%%%%%%%%%%%%%%%%%%%%%%%%%
%%%%%%%%%%%%%%%%%%%%%%%%%%%%%%%%%%%%%%%%%%%%%%%%%%%%%%%%%%%%%%%%%%%%%%%%%%%%%%%%
%%%%%%%%%%%%%%%%%%%%%%%%%%%%%%%%%%%%%%%%%%%%%%%%%%%%%%%%%%%%%%%%%%%%%%%%%%%%%%%%
%%%%%%%%%%%%%%%%%%%%%%%%%%%%%%%%%%%%%%%%%%%%%%%%%%%%%%%%%%%%%%%%%%%%%%%%%%%%%%%%
%%%%%%%%%%%%%%%%%%%%%%%%%%%%%%%%%%%%%%%%%%%%%%%%%%%%%%%%%%%%%%%%%%%%%%%%%%%%%%%%
%%%%%%%%%%%%%%%%%%%%%%%%%%%%%%%%%%%%%%%%%%%%%%%%%%%%%%%%%%%%%%%%%%%%%%%%%%%%%%%%
%%%%%%%%%%%%%%%%%%%%%%%%%%%%%%%%%%%%%%%%%%%%%%%%%%%%%%%%%%%%%%%%%%%%%%%%%%%%%%%%
%%%%%%%%%%%%%%%%%%%%%%%%%%%%%%%%%%%%%%%%%%%%%%%%%%%%%%%%%%%%%%%%%%%%%%%%%%%%%%%%
%%%%%%%%%%%%%%%%%%%%%%%%%%%%%%%%%%%%%%%%%%%%%%%%%%%%%%%%%%%%%%%%%%%%%%%%%%%%%%%%
%%%%%%%%%%%%%%%%%%%%%%%%%%%%%%%%%%%%%%%%%%%%%%%%%%%%%%%%%%%%%%%%%%%%%%%%%%%%%%%%
%%%%%%%%%%%%%%%%%%%%%%%%%%%%%%%%%%%%%%%%%%%%%%%%%%%%%%%%%%%%%%%%%%%%%%%%%%%%%%%%
%%%%%%%%%%%%%%%%%%%%%%%%%%%%%%%%%%%%%%%%%%%%%%%%%%%%%%%%%%%%%%%%%%%%%%%%%%%%%%%%

%%%%%%%%%%%%%%%%%%%%%%%%%%%%%%%%%%
%%%%%%%%% Appendice B %%%%%%%%%%%%
%%%%%%%%%%%%%%%%%%%%%%%%%%%%%%%%%%

\section{Energy and action}
\label{app-enact}

\begin{proof}[Proof of Lemma \ref{chargs}]
Let $u$ be a $\delta$-NLS ground state at mass $\mu$ and let $\omega>0$ be the associated Lagrange multiplier, given by $\omega=\mu^{-1}(\|u\|_p^p-Q(u))$. Assume, by contradiction, that there exists $v=\eta_\la+\xi\G_\la \in N_\omega$ such that $S_\omega(v)<S_\omega(u)$ and let $\sigma>0$ be such that $\|\sigma v\|_2^2=\mu$. Then
\begin{equation*}
S_\omega(\sigma v)=\f{\sigma^2}{2}Q_\omega(v)-\f{\sigma^p}{p}\|v\|_p^p.
\end{equation*}
Computing the derivative with respect to $\sigma$ and using that $v\in N_\omega$, we get 
\begin{equation*}
\f{d}{d\sigma}S_\omega(\sigma v)=\sigma Q_\omega(v)-\sigma^{p-1}\|v\|_p^p=\sigma I_\omega(v)+(\sigma-\sigma^{p-1})\|v\|_p^p=\sigma(1-\sigma^{p-2})\|v\|_p^p,
\end{equation*}
which is greater than or equal to zero if and only if $0<\sigma\leq 1$. Hence $S_\omega(\sigma v)\le S_\omega(v)$, for every $\sigma>0$. Therefore, since $S_\omega(\sigma v)\le S_\omega(v) <S_\omega(u)$,
\begin{equation*}
E(\sigma v)+\f{\omega}{2}\|\sigma v\|_2^2<E(u)+\f{\omega}{2}\|u\|_2^2,
\end{equation*}
and using the fact that $\|\sigma v\|_2^2=\|u\|_2^2=\mu$, this entails $E(\sigma v)<E(u)$. However, as this contradicts the assumptions on $u$, we obtain that $u$ is a $\delta$-NLS action minimizer at frequency $\omega$.
\end{proof}

%%%%%%%%%%%%%%%%%%%%%%%%%%%%%%%%%%%%%%%%%%%%%%%%%%%%%%%%%%%%%%%%%%%%%%%%%%%%%%%%
%%%%%%%%%%%%%%%%%%%%%%%%%%%%%%%%%%%%%%%%%%%%%%%%%%%%%%%%%%%%%%%%%%%%%%%%%%%%%%%%
%%%%%%%%%%%%%%%%%%%%%%%%%%%%%%%%%%%%%%%%%%%%%%%%%%%%%%%%%%%%%%%%%%%%%%%%%%%%%%%%
%%%%%%%%%%%%%%%%%%%%%%%%%%%%%%%%%%%%%%%%%%%%%%%%%%%%%%%%%%%%%%%%%%%%%%%%%%%%%%%%
%%%%%%%%%%%%%%%%%%%%%%%%%%%%%%%%%%%%%%%%%%%%%%%%%%%%%%%%%%%%%%%%%%%%%%%%%%%%%%%%
%%%%%%%%%%%%%%%%%%%%%%%%%%%%%%%%%%%%%%%%%%%%%%%%%%%%%%%%%%%%%%%%%%%%%%%%%%%%%%%%
%%%%%%%%%%%%%%%%%%%%%%%%%%%%%%%%%%%%%%%%%%%%%%%%%%%%%%%%%%%%%%%%%%%%%%%%%%%%%%%%
%%%%%%%%%%%%%%%%%%%%%%%%%%%%%%%%%%%%%%%%%%%%%%%%%%%%%%%%%%%%%%%%%%%%%%%%%%%%%%%%
%%%%%%%%%%%%%%%%%%%%%%%%%%%%%%%%%%%%%%%%%%%%%%%%%%%%%%%%%%%%%%%%%%%%%%%%%%%%%%%%
%%%%%%%%%%%%%%%%%%%%%%%%%%%%%%%%%%%%%%%%%%%%%%%%%%%%%%%%%%%%%%%%%%%%%%%%%%%%%%%%
%%%%%%%%%%%%%%%%%%%%%%%%%%%%%%%%%%%%%%%%%%%%%%%%%%%%%%%%%%%%%%%%%%%%%%%%%%%%%%%%
%%%%%%%%%%%%%%%%%%%%%%%%%%%%%%%%%%%%%%%%%%%%%%%%%%%%%%%%%%%%%%%%%%%%%%%%%%%%%%%%

%%%%%%%%%%%%%%%%%%%%%%%%%%%%%%%%%%
%%%%%%%%% Bibliografia %%%%%%%%%%%
%%%%%%%%%%%%%%%%%%%%%%%%%%%%%%%%%%

\section{Stability of the set of ground states}
\label{app-stab}

In this section, we show that the set of ground states at mass $\mu$, denoted by $\mathcal{A}_{\mu}$, is orbitally stable. Although this is an expected result, we report it here for the sake of completeness. The proof is obtained adaptating the arguments in \cite{CL-82} and collecting some other results already present in the literature. 

Fix $\la>0$. Let us recall that the energy domain \eqref{dom} can be endowed with the natural norm
\begin{equation}
\label{Dnorm}
\|\psi\|_{D}:=\left(\|\na \phi_{\la}\|_{2}^{2}+\la \|\phi_{\la}\|_{2}^{2}+(\alpha+\theta_{\la})|q|^{2}\right)^{\f{1}{2}},
\end{equation}
and denote by $D^{*}$ the dual space of $D$. In view of \eqref{Dnorm}, the expression of the energy $E$ in \eqref{E} can be written as 
\begin{equation*}
E(\psi)=\f{1}{2}\|\psi\|_{D}^{2}-\f{\la}{2}\|\psi\|_{2}^{2}-\f{1}{p}\|\psi\|_{p}^{p}.
\end{equation*}

Let us then consider the Cauchy problem
\begin{equation}
\label{CP}
\begin{cases}
i\f{\partial \psi}{\partial t}=H_\alpha \psi-|\psi|^{p-2}\psi\\ 
\psi(0)=\psi_{0},
\end{cases}
\end{equation}
and define its weak solutions as follows.
\begin{definition}
Let $I$ be an open interval such that $0\in I\subset \R$. A function $\psi\in L^{\infty}(I;D)$ is called a local weak solution to \eqref{CP} on $I$ if $\psi$ belongs to $L^{\infty}(I;D)\cap W^{1,\infty}(I;D^{*})$ and satisfies \eqref{CP} in the sense of $L^{\infty} (I ; D^{*})$. In particular, if $I$ coincides with $\R$, then $\psi$ is called a global weak solution to \eqref{CP}.
\end{definition}

The next result concerns the global well-posedness in $D$ and is the first ingredient to prove the orbital stability of $\mathcal{A}_{\mu}$ via \cite{CL-82}. The proof is obtained by combining inequality \cite[eq. (2.11)]{CFN-21} and the results about the local well-posedness obtained in \cite[Appendix B]{FGI-21}.

\begin{proposition}[Global well-posedness in $D$]
\label{GWP}
Let $2<p<4$. Then, for any $\psi_{0}\in D$ there exists a unique global weak solution 
\begin{equation*}
\psi\in C(\R; D)\cap C^{1}(\R; D^{*})
\end{equation*} 
of \eqref{CP}. Moreover, the following conservation laws hold:
\begin{gather}
 \label{cons-mass} \|\psi(t)\|_{L^{2}(\R^{2})}=\|\psi_{0}\|_{L^{2}(\R^{2})},\qquad\forall\,t\in \R ,\\[.2cm]
 \label{cons-en} E(\psi(t))=E(\psi_{0}),\qquad \forall\,t\in \R.
\end{gather}
\end{proposition}
\begin{proof}
%Let $\psi_{0}\in D$. By \cite[Proposition 1.1]{FGI-21}, there exists the unique maximal solution 
%\begin{equation*}
%\psi\in C((-T_{\min}, T_{\max}); D)\cap C^{1}((-T_{\min}, T_{\max}); D^{*})
%\end{equation*} 
%of \eqref{CP} and the conservation laws hold:
%\begin{gather}
% \label{cons-mass} \|\psi(t)\|_{L^{2}(\R^{2})}=\|\psi_{0}\|_{L^{2}(\R^{2})}\quad\forall\,t\in (-T_{\min}, T_{\max}) ,\\
% \label{cons-en} E(\psi(t))=E(\psi_{0})\quad \forall\,t\in (-T_{\min}, T_{\max}).
%\end{gather}
%In order to extend the maximal solution $\psi$ on $\R$, we need to prove a priori estimate. In particular, by \cite[Lemma B.1]{FGI-21} 
The proof is an application of \cite[Theorem 2.4]{OSY-12}, that deals with abstract NLSE in the spirit of \cite{C-CL03}, but with general self-adjoint operators in the place of the standard Laplacian. The hypothesis to be verified are the six conditions \cite[\bf{(G1)--(G6)}]{OSY-12} on the nonlinear term $g(\psi)=-|\psi|^{p-2}\psi$ of the equation, together with a uniqueness result for the solutions to \eqref{CP}. The first five conditions {\bf(G1)--(G5)} and the uniqueness result are proved respectively in \cite[Lemma B.1]{FGI-21} and \cite[Lemma B.2]{FGI-21} and are sufficient for the local well-posedness in $D$. We are left to prove hypothesis {\bf (G6)}, that reads in our context as follows:
\begin{equation*}
\begin{split}
{\bf (G6)}\quad &\text{$\exists\,\ep\in(0,1]$ and $C_{0}(\cdot)\geq 0$ }: \, \f{1}{p}\|\psi\|_{p}^{p}\leq \f{1-\ep}{2}\|\psi\|_{D}^{2}+C_{0}(\|\psi\|_{2}),\qquad\forall \,\psi \in D.
\end{split}
\end{equation*} 
However, by using \cite[eq. (2.11)]{CFN-21} and inequality $ab\leq \ep a^{r}+C(\ep)b^{\f{r}{r-1}}$, with $r=\f{2}{p-2}$, there results
\begin{equation*}
\f{1}{p}\|\psi\|_{p}^{p}\leq \f{C_{p}}{p}\|\psi\|_{D}^{p-2}\|\psi\|_{2}^{2}\leq \f{\ep C_{p}}{p}\|\psi\|_{D}^{2}+\f{C(\ep)C_{p}}{p}\|\psi\|_{2}^{\f{4}{4-p}}, 
\end{equation*}
which proves {\bf (G6)} and concludes the proof.
\end{proof}

Now, we can introduce the definition of stability and prove the aimed result.

\begin{definition}
Fix $\mu>0$. We say that the set of ground states $\mathcal{A}_{\mu}$ is orbitally stable if for any $\ep>0$ there exists $\delta>0$ such that for any $\psi_{0}\in D$ satisfying $\inf_{u\in \mathcal{A}_{\mu}}\|\psi_{0}-u\|_{D}<\delta$, the unique global solution $\psi(t)$ of \eqref{CP} satisfies $\inf_{u\in \mathcal{A}_{\mu}}\|\psi(t)-u\|_{D}<\ep$ for any $t\in \R$.
\end{definition}

\begin{proposition}
\label{stabGmu}
For any $\mu>0$ the set of ground states $\mathcal{A}_{\mu}$ is orbitally stable.
\end{proposition}
\begin{proof}
We prove it by contradiction as in \cite{CL-82}. Suppose that $\mathcal{A}_{\mu}$ is not orbitally stable. This means that there exists $\ep_{0}>0$, a sequence $(\psi_{0}^{n})_{n}\subset D$ and a sequence $(t_{n})_{n}\subset \R$  such that 
\begin{equation}
\label{inf0}
\inf_{u\in \mathcal{A}_{\mu}}\|\psi_{0}^{n}-u\|_{D}\to 0, \quad\text{as}\quad n\to +\infty,
\end{equation}
but 
\begin{equation}
\label{contr}
\inf_{u\in \mathcal{A}_{\mu}}\|\psi^{n}(t_{n})-u\|_{D}\geq \ep_{0}, \quad\text{for every}\quad n\in \N,
\end{equation}
where $\psi^{n}$ is the unique global solution of \eqref{CP} with initial datum $\psi_{0}^{n}$ provided by Proposition \ref{GWP}.

The convergence in \eqref{inf0} entails the existence of a sequence $(u_{n})_{n}\subset \mathcal{A}_{\mu}$ such that $\|\psi_{0}^{n}-u_{n}\|_{D}\to 0$ as $n\to +\infty$. It is straightforward to check that $\|\psi_{0}^{n}\|_{2}^{2}\to \mu$ as $n\to +\infty$. Moreover, being $(u_{n})\subset \mathcal{A}_{\mu}$, they satisfy $E(u_{n})=\Eps(\mu)<0$ and, applying \cite[eq. (2.11)]{CFN-21}, it turns out that $\|u_{n}\|_{D}$ and $\|u_{n}\|_{p}$ are bounded. As a consequence, since $\|\psi_{0}^{n}\|_{D}\le \|\psi_{0}^{n}-u_{n}\|_{D}+\|u_{n}\|_{D}$, the boundedness of $\|\psi_{0}^{n}\|_{D}$ follows. Moreover, the same argument together with \cite[eq. (2.11)]{CFN-21} can be used to prove the boundedness of $\|\psi_{0}^{n}\|_{p}$. By using these estimates, one can show that $E(\psi_{0}^{n})\to \Eps(\mu)$. Indeed, 
\begin{equation*}
\begin{split}
E(\psi_{0}^{n})-E(u_{n})&\leq \f{1}{2}\left|\|\psi_{0}^{n}\|_{D}^{2}-\|u_{n}\|_{D}^{2}\right|+\f{\la}{2}\left|\|\psi_{0}^{n}\|_{2}^{2}-\|u_{n}\|_{2}^{2}\right|+\f{1}{p}\left|\|\psi_{0}^{n}\|_{p}^{p}-\|u_{n}\|_{p}^{p}\right|\\
&\leq \left(\|\psi_{0}^{n}\|_{D}+\|u_{n}\|_{D}\right)\|\psi_{0}^{n}-u_{n}\|_{D}^{2}+ \left(\|\psi_{0}^{n}\|_{2}+\|u_{n}\|_{2}\right)\|\psi_{0}^{n}-u_{n}\|_{2}\\
&+\max\{\|\psi_{0}^{n}\|_{p}^{p-1},\|u_{n}\|_{p}^{p-1}\}\|\psi_{0}^{n}-u_{n}\|_{p}\to 0 \quad\text{as}\quad n\to+\infty.
\end{split}
\end{equation*}

In view of \eqref{cons-mass} and \eqref{cons-en}, we have that 
\begin{equation}
\label{doubleconv}
\|\psi^{n}(t_{n})\|_{2}^{2}\to \mu\quad\text{and}\quad E(\psi^{n}(t_{n}))\to \Eps(\mu)\quad\text{as}\quad n\to +\infty.
\end{equation} 
By \eqref{doubleconv} and \cite[eq. (2.11)]{CFN-21}, both $\|\psi^{n}(t_{n})\|_{D}$ and $\|\psi^{n}(t_{n})\|_{p}$ are bounded. Moreover, if we define $\xi_{n}:=\f{\sqrt{\mu}}{\|\psi^{n}(t_{n})\|_{2}}\psi^{n}(t_{n})$, then $\|\xi_{n}\|_{2}^{2}=\mu$ and, by using $\|\psi^{n}(t_{n})\|_{2}^{2}\to \mu$ and the boundedness of $\|\psi^{n}(t_{n})\|_{D}$ and $\|\psi^{n}(t_{n})\|_{p}$,  
\begin{equation*}
E(\xi_{n})=E(\psi^{n}(t_{n}))+o(1),\quad \text{as}\quad n\to+\infty.
\end{equation*}
This entails that $\xi_{n}$ is a minimizing sequence for $E$ of mass $\mu$. Hence, arguing as in the proof of Theorem \ref{exchar-gs}, one has that there exists $u\in \mathcal{A}_{\mu}$ such that $\|\xi_{n}-u\|_{D}\to 0$ as $n\to+\infty$. By the definition of $\xi_{n}$ and the facts that $\|\psi^{n}(t_{n})\|_{2}^{2}\to \mu$ as $n\to+\infty$ and $\|\psi^{n}(t_{n})\|_{D}$ is bounded, there results that 
\begin{equation*}
\|\psi^{n}(t_{n})-u\|_{D}\to 0\quad\text{as}\quad n\to+\infty,
\end{equation*}
being in contradiction with \eqref{contr}.
\begin{remark}
Proposition \ref{stabGmu} deals with the orbital stability of the whole set of ground states $\mathcal{A}_{\mu}$. As explained in \cite{CL-82}, a natural improvement of such a result is the orbital stability of a single ground state (up to gauge invariace), which is a straightforward consequence of Proposition \ref{stabGmu} as soon as one can prove the uniqueness of the ground state (up to gauge invariace). This is true, for instance, for the standard $L^{2}$-subcritical NLSE \cite{K-89} and could be an interesting topic to be studied in the context of the $\delta-$NLSE in a forthcoming paper.
\end{remark}
\end{proof}

\end{document}